\documentclass[11pt]{article}

\usepackage{amsmath,amsthm,amsfonts,latexsym}
\usepackage{amssymb}
\usepackage{mathabx}
\usepackage[latin1]{inputenc}

\begin{document}

\newtheorem*{theo}{Theorem}
\newtheorem*{pro} {Proposition}
\newtheorem*{cor} {Corollary}
\newtheorem*{lem} {Lemma}
\newtheorem{theorem}{Theorem}[section]
\newtheorem{corollary}[theorem]{Corollary}
\newtheorem{lemma}[theorem]{Lemma}
\newtheorem{proposition}[theorem]{Proposition}
\newtheorem{conjecture}[theorem]{Conjecture}

\theoremstyle{definition}
 \newtheorem{definition}[theorem]{Definition} 
  \newtheorem{example}[theorem]{Example}
   \newtheorem{remark}[theorem]{Remark}
   
\newcommand{\Naturali}{{\mathbb{N}}}
\newcommand{\Reali}{{\mathbb{R}}}
\newcommand{\Complessi}{{\mathbb{C}}}
\newcommand{\Toro}{{\mathbb{T}}}
\newcommand{\Relativi}{{\mathbb{Z}}}
\newcommand{\HH}{\mathfrak H}
\newcommand{\KK}{\mathfrak K}
\newcommand{\LL}{\mathfrak L}
\newcommand{\as}{\star_{\sigma}}
\newcommand{\tn}{\vert\hspace{-.3mm}\vert\hspace{-.3mm}\vert}
\def\A{{\mathcal A}}
\def\B{{\mathcal B}}
\def\E{{\mathcal E}}
\def\F{{\mathcal F}}
\def\H{{\mathcal H}}
\def\K{{\mathcal K}}
\def\L{{\mathcal L}}
\def\N{{\mathcal N}}
\def\M{{\cal M}}
\def\gM{{\frak M}}
\def\O{{\cal O}}
\def\P{{\cal P}}
\def\S{{\cal S}}
\def\T{{\cal T}}
\def\U{{\cal U}}
\def\C{{\mathcal C}}
\def\V{{\mathcal V}}
\def\X{{\mathcal X}}
\def\Y{{\mathcal Y}}
\def\qed{\hfill$\square$}
\def\veps{{\varepsilon}}

\title{Actions of Fell bundles, II: 
tensor products and Fourier-Stieltjes algebras}

\author{Erik B\'edos, Roberto Conti\\}
\date{\today}
\maketitle
\markboth{R. Conti, Erik B\'edos}{
}
\renewcommand{\sectionmark}[1]{}
\begin{abstract} 
 In the context of Fell bundles over discrete groups,
 we introduce and analyze 
suitable notions of direct sum and tensor product of actions of Fell bundles on  Hilbert bundles. 
For $C^*$-dynamical systems, where actions arise from equivariant representations, we discuss how the tensor product of two such actions is related to the action associated with the tensor product of the underlying equivariant representations. Likewise, since any action gives rise to a $C^*$-correspondence between the full cross-sectional $C^*$-algebras of the involved Fell bundles, we examine the connection between the $C^*$-correspondence associated with the tensor product  of two given actions and the tensor product of the associated $C^*$-correspondences.
We also discuss how the tensor product of actions naturally appears in Rieffel-like induction processes. 
 Finally we present related results on the Fourier-Stieltjes and Fourier algebras of a Fell bundle. 
 
\end{abstract}

\vskip 0.5cm
\noindent {\bf MSC 2020}: 46L55 (Primary); 43A35, 46L07, 46L08 (Secondary)

\smallskip
\noindent {\bf Keywords}: 
Fell bundle, positive definite bundle map, Hilbert bundle, action, direct sum, tensor product, crossed product, $C^*$-correspondence, induction

\section{Introduction}
The theory of Fell bundles (originally called $C^*$-algebraic bundles in \cite{FD1, FD2}) over locally compact groups
may be thought of as providing a unifying setting for discussing various concepts and constructions about groups and related objects,
such as group $C^*$-algebras, group representations, $C^*$-dynamical systems and associated crossed products,  induction processes, etc. 
To avoid many technicalities, 
we will assume throughout this paper that all groups are discrete.   
Taking inspiration from the notions of unitary representation of a group on a Hilbert space and representation of a $C^*$-algebra on a Hilbert $C^*$-module, 
we introduced in a previous article \cite{BeCo8} the concept of  $(\A,\varphi)$-action of a Fell bundle $\A$ over a group $G$ on a Hilbert $\B$-bundle, where $\B$ is a Fell bundle over a group $H$ and $\varphi : G\to H$ is a homomorphism (when $H =G$ and $\varphi$ is the identity map, it is convenient to simply call it
an $\A$-action on a Hilbert $\B$-bundle).  A long list of examples was then exhibited in \cite{BeCo8}, providing evidence that such a concept fits well with many other 
structural aspects scattered in the literature about Fell bundles and their associated cross-sectional $C^*$-algebras.
Now, in order to broaden the scope of applications, it is natural to discuss operations 
like direct sum and tensor product of actions of Fell bundles. While the treatment of an arbitrary direct sum of $(\A,\varphi)$-actions does not pose any significant difficulty (see Section \ref{DirectSum}), the case of tensor product of two such actions looks definitely more problematic.
The main technical point of this paper is to show that it is possible to form a kind of tensor product action under suitable compatibility assumptions. 

To be more precise, let $G, H, K$ be groups, $\A=(A_g)_{g\in G}$, $\B=(B_h)_{h\in H}$, and $\C=(C_k)_{k\in K}$ be Fell bundles
 and $\varphi : G\to H$ and $\psi : H\to K$ be homomorphisms. Consider an  $(\A, \varphi)$-action  $\rho$ on some  Hilbert $\B$-bundle $\X=(X_h)_{h\in H}$ and  
 a $(\B,\psi)$-action $\sigma$ on some  Hilbert $\C$-bundle $\Y = (Y_k)_{k\in K}$. As we show in subsection \ref{tp-general}, one may then 
  create an $(\A,\psi\circ \varphi)$-action $\rho \dot\boxtimes \sigma$ on a certain Hilbert $\C$-bundle $\X\boxtimes_{\sigma} \Y$. 
 The construction of $\X\boxtimes_{\sigma} \Y$ (and its relatives discussed below)
is closely connected with the one presented by Abadie and Ferraro in \cite[Section 5.1]{AF} in the more restrictive context of equivalence of Fell bundles over groups and Morita-Rieffel equivalence of the associated cross-sectional $C^*$-algebras. 
 In the case of a Fell bundle $\B$ over $G$ and two $\B$-actions $\rho$ and $\sigma$ on some  Hilbert $\B$-bundles $\X$ and $\Y$, one can actually also define a $\B$-action $\rho\dot\otimes\sigma$ on a Hilbert $\B$-bundle $\X\otimes_\sigma\Y$, which appears to be unitarily equivalent to $\rho \dot\boxtimes \sigma$. Of course, in this situation, one may also form a $\B$-action $\sigma\dot\otimes \rho$ on the Hilbert $\B$-bundle $\Y\otimes_\rho \X$, 
having no obvious a priori relationship with  $\rho\dot\otimes\sigma$.  
 As there are a number of technical steps, it is quite instructive to deal with this simpler setting first, as we do in subsection \ref{tp-b-actions}.   
 When $\B$ is the Fell bundle arising from a unital $C^*$-dynamical system, the concept of $\B$-action is related to the notion of equivariant representation of the system, cf.~\cite{BeCo8}.  We show in Theorem \ref{dynsyst-emb} that the tensor product of two actions associated with some equivariant representations contains in a suitable sense 
 (many copies of)
 the action associated with the tensor product of these equivariant representations (as defined in \cite{BeCo6}), up to a perturbation by some kind of 1-cocycle (see subsection \ref{tp-equiv} for some details about this notion). 
 
 Besides the intrinsic interest of the tensor product as a way of producing new actions, we exemplify its occurrence in various situations. 
We show first in Theorem \ref{corr-morph} that given two actions $\rho$ and $\sigma$ as above,  there exists a morphism 
from the (internal) tensor product of the $C^*$-correspondences associated with $\rho$ and $\sigma$ in \cite[Section 6.1]{BeCo8} to 
the $C^*$-correspondence associated with $\rho\dot\boxtimes \sigma$. 
Next, we consider Fell bundles  $\A$ and $\B$ over $G$, an $\A$-action $\rho$ on a  Hilbert $\B$-bundle $\X$, and   a
$*$-representation $\pi$ of $\B $ on some Hilbert $C$-module $Y$.
Analogously to Rieffel's general construction of induced $*$-representations of $C^*$-algebras, as presented for example in \cite[Section 2.4]{RW}, $\rho$ and $\pi$ induce in a natural way a $*$-representation ${\rm Ind}_\rho(\pi)$ of $\A$ on a certain Hilbert $C$-module (see subsection 5.2).  
On the other hand, one may also associate to $\pi$ a $\B$-action $\rho_\pi$ on a certain Hilbert $\C$-bundle, cf.~\cite[Example 4.15]{BeCo8}, where $\C$ is the trivial Fell bundle over $G$ associated with $C$. We show in Theorem \ref{Ind-prop} that the $\A$-actions $\rho_{{\rm Ind}_\rho(\pi)}$ and $\rho\dot\boxtimes\rho_\pi$ are  unitarily equivalent.
 Further, when  $\A$ is a Fell bundle over $G$,  $\pi$ is a $*$-representation of $\A $ on a  Hilbert $B$-module $X$,
  and $\pi'$ is a $*$-representation of $B$ on a Hilbert $C$-module $Y$, there is a naturally induced $*$-representation
 $\pi''$ of $\A$ on the Hilbert $C$-module $X\otimes_{\pi'} Y$. Using Theorem \ref{Ind-prop} we deduce in  Proposition \ref{rhopitensrhopi'} that the $\A$-action associated to $\pi''$ is unitary equivalent to a  tensor product action of the form $\rho_\pi\dot\boxtimes\rho_{\pi'}$, where $\rho_{\pi'}$ is naturally defined.
 Finally, we describe in subsection \ref{RieffelInd} how the tensor product construction may be used to produce a Rieffel-like induction of actions of Fell bundles via conditional expectations, generalizing Rieffel's induction procedure \cite{Rie74} for $*$-representations of $C^*$-algebras on Hilbert $C^*$-modules via conditional expectations. Our setting is a Fell bundle $\A$ over $G$ having  a sub-Fell bundle $\B$ with a conditional expectation $\E : \A \to \B$ as in \cite{Exel2}, and a $\B$-action $\sigma$ on a Hilbert $\C$-bundle $\Y$, $\C$ being also a Fell bundle over $G$. We may then define  an $\A$-action ${\rm Ind}_\E(\sigma)$ induced by $\sigma$ via $\E$ on a certain Hilbert $\C$-bundle, of the form ${\rm Ind}_\E(\sigma)= \rho_\E \dot\boxtimes \sigma$ where $\rho_\E$ is the $\A$-action on a Hilbert $\B$-bundle defined in \cite{BeCo8}. 
 Given a subgroup $H$ of $G$, this procedure may be applied to induce $\A$-actions from certain actions of the restriction of $\A$ to $H$.
 
 Another aim of the present work has been to initiate a more thorough study of the Fourier-Stieltjes algebra $B(\B)$ of a unital Fell bundle $\B$ over a group $G$, that was succinctly introduced in \cite[Remark 3.17]{BeCo7} as the span of all so-called positive definite $\B$-bundle maps.
 In harmonic analysis, the Fourier-Stieltjes  algebra $B(G)$ of  $G$ is defined as the span of all positive definite complex functions on $G$, or, equivalently, as the space of all coefficient functions of unitary representations of $G$ on Hilbert spaces, cf.~\cite[D\'efinition 2.2]{Eym}, \cite[Definition 2.1.5]{KL}. We show in Theorem \ref{FS-coeff} that a similar characterization holds for $B(\B)$, by replacing unitary representations of $G$ with $\B$-actions on Hilbert $\B$-bundles.  This may also be seen as a generalization of the analogous result for 
twisted unital $C^*$-dynamical systems, cf.~\cite[Definition 3.1, Corollary 4.4]{BeCo6}. As in the classical case, one may define a norm on $B(\B)$, 
making it a unital Banach algebra, cf.~Theorem \ref{FS-Banach}. This partly relies on the fact that infinite direct sums of actions are well-behaved, as we explain in Section \ref{DirectSum}.
We also show in Proposition \ref{embFS(G)} that $B(G)$ can be continuously embedded in $B(\B)$. Finally, we 
define the Fourier algebra  $A(\B)$ as the closure in $B(\B)$ of the span of all finitely supported positive definite $\B$-bundle maps, and show that it coincides with the closure of the set of all finitely supported elements in $B(\B)$ (as in the case of a unital $C^*$-dynamical system, cf.~\cite{Ravn}).  

\section{Preliminaries} \label{prelim}

Let $A$ be a $C^*$-algebra. We will denote by $A^+=\{ a \in A: a\geq 0\}$ the cone of positive elements in $A$. By a Hilbert $A$-module $X$ we will always mean a \emph{right} Hilbert $A$-module, as defined in \cite{La1}, and $\L_A(X)$ will then denote the $C^*$-algebra consisting of all adjointable linear maps from $X$ to itself. When $B$ is a $C^*$-algebra and $A$ acts on the left of a Hilbert $B$-module $X$ by adjointable operators (i.e.,  there exists a $*$-representation of $A$ on $X$, meaning that there exists a $*$-homomorphism of $A$ into $\L_B(X)$), we will say that $X$ is a $C^*$-correspondence over $A$ and $B$, or an $A$-$B$ correspondence. Note that we do not assume that $A$ acts non-degenerately on $X$, as some authors do.  

Throughout this paper, $G$ and $H$ will always be discrete groups. By a slight abuse of notation, we will denote the unit of any group by  $e$. If $X$ is a complex vector space,  then $C_c(G, X)$ will denote the vector space of all finitely supported functions from $G$ to $X$ (with pointwise operations).

We will be considering a Fell bundle  $\A=(A_g)_{g \in G}$ over $G$. Our notation and terminology will be as in Exel's book \cite{Exel2}, hence as in \cite{BeCo7} and \cite{BeCo8}. 
 In particular, $\A$ will also denote the disjoint union of the $A_g$'s. We recall that the unit fibre $A_e$ is a $C^*$-algebra and that
$C_c(\A)$ denotes the $*$-algebra consisting of all finitely supported functions $f:G\to \A$ such that $ f(g) \in A_g \text{ for all } g \in G$, the product and involution being given by
\[ (f_1 \star f_2) (h) = \sum_{g\in G} f_1(g) f_2(g^{-1}h), \quad f^*(h) = f(h^{-1})^*\]
for all $h \in G$. 
A $*$-representation $\pi=(\pi_g)_{g\in G}$ of $\A$ in some $*$-algebra $C$ is a family of linear maps $\pi_g:A_g\to C$ satisfying that \[\pi_g(a)\pi_{g'}(a')= \pi_{gg'}(aa') \quad \text{ and }\quad \pi_g(a)^*= \pi_{g^{-1}}(a^*)\] for all $g, g'\in G$ and $a \in A_g, a'\in A_{g'}$. It induces a $*$-homomorphism $\phi_\pi:C_c(\A) \to C$, often called the integrated form of $\pi$,  given by 
\[\phi_\pi(f) = \sum_{g\in G} \pi(f(g)) \quad \text{for all } f \in C_c(\A).\]
For $g \in G$, we let $j^\A_g: A_g \to C_c(\A)$ denote the canonical injection. When $a\in A_g$ we will often write $a\odot g$ instead of $j^\A_g(a)$. 

The full cross-sectional $C^*$-algebra $C^*(\A)$ is defined as the $C^*$-completion of $C_c(\A)$ with respect to the universal $C^*$-norm  $\|\cdot\|_{\rm u}$ given by \[\|f\|_{\rm u} 
=  \sup\big\{p(f): p \text{ is a } C^*\text{-seminorm on } C_c(\A)\big\} \leq \sum_{g\in G} \|f(g)\|.\]
We let $\kappa^\A:C_c(\A)\to C^*(\A)$ 
denote the canonical injection and  let $\widehat j^\A= (\widehat j^\A_g)_{g\in G}$ be the $*$-representation of $\A$ in $C^*(\A)$ given by $\widehat j^\A_g := \kappa^\A\circ j^\A_g: A_g \to C^*(\A)$  for each $g\in G$. As shown in \cite[Proposition 17.9, (iv)]{Exel2}, each $\widehat j^{\A}_g$ is an isometric map. Also, $\kappa^\A$ is the integrated form of $\widehat j^\A$, i.e.,
\[\kappa^\A(f) =  \sum_{g\in G} \widehat j^\A_g(f(g))  \quad \text{for all } f\in C_c(\A).\] 

We will make use of the matrix $C^*$-algebras associated to a Fell bundle $\A$ by Abadie and Ferraro in \cite{AF}. If $g_1, \ldots, g_n \in G$ and $\mathbf{g}:= (g_1, \ldots, g_n) \in G^n$, 
then
\[ M_\mathbf{g}(\A) := \big\{R=[r_{ij}]\in M_n(\A) : r_{ij} \in A_{g_i^{-1}g_j} \text{ for all } i, j =1, \ldots, n\big\}\]
is a $*$-algebra with respect to the natural operations, 
which can be equipped with a norm turning it into a $C^*$-algebra, 
cf.~\cite[Lemma 2.8]{AF}. As in \cite{BeCo7}, we will represent  $M_\mathbf{g}(\A)$ by adjointable operators on a  Hilbert $A_e$-module 
in the following way.
Consider $A_\mathbf{g}:= A_{g_1^{-1}}\oplus \cdots \oplus A_{g_n^{-1}}$ as the  Hilbert $A_e$-module obtained by taking the direct sum of the $A_{g_i}$'s (considered as  Hilbert $A_e$-modules), whose inner product is given by
\[ \big\langle (a_1, \ldots, a_n), (a'_1, \ldots, a'_n)\big\rangle_{A_e}  = \sum_{i=1}^n a_i^*a'_i\]
for $a_i, a'_i \in A_{g_i^{-1}}$, $i=1, \ldots, n$.
Then each $R=[r_{ij}] \in M_\mathbf{g}(\A)$ acts on $A_\mathbf{g}$ by  
\[L_R(a_1, \ldots, a_n) = (a'_1, \ldots, a'_n), \ \text{ where } a'_i := \sum_{j=1}^n r_{ij}a_j \text { for each }i = 1, \ldots, n. \] 
It is easy to check that the operator $L_R : A_\mathbf{g}\to A_\mathbf{g}$ is adjointable with $(L_R)^*=L_{R^*}$, and that the map $R \mapsto L_R$ from $M_\mathbf{g}(\A)$ into  $\mathcal{L}_{A_e}(A_\mathbf{g})$ is an injective $*$-homomorphism. 
Thus, the norm on $M_\mathbf{g}(\A)$ satisfies that $\|R\|= \|L_R\|$. 

We recall some other relevant definitions from \cite{BeCo7}.
Let $\varphi: G \to H$ be a group-homomorphism, i.e., $\varphi\in {\rm Hom}(G,H)$ and $\B=(B_h)_{h\in H}$ be Fell bundle over $H$.  

An \emph{$\A$-$\varphi$-$\B$ bundle map} is a family of maps $T = (T_g)_{g\in G}$  such that $T_g: A_g \to B_{\varphi(g)}$ is linear and bounded for every $g\in G$. 
When $G=H$ and $\varphi = {\rm id}_G$, so $\A$ and $\B$ are both Fell bundles over $G$, we call $T$ an $\A$-$\B$ \emph{bundle map}
  Moreover, if $\A=\B$, we call $T$ a \emph{$\B$-bundle map}.

When $T = (T_g)_{g\in G}$ is an $\A$-$\varphi$-$\B$ bundle map, the map $\phi_T: C_c(\A) \to C_c(\B)$ given by
\[ \phi_T\big(f)= \sum_{g\in G} j^\B_{\varphi(g)}(T_g(f(g)))\quad \text{for all } f\in C_c(\A)\]
is linear. We say that $T$ is \emph{full} if $\phi_T$ is bounded w.r.t.~the universal $C^*$-norms on $C_c(\A)$ and $C_c(\B)$, and denote the extension of $\phi_T$ to a linear bounded map from  $C^*(\A)$ into $C^*(\B)$ by $\Phi_T$.
 
An $\A$-$\varphi$-$\B$ bundle map $T = (T_g)_{g\in G}$ is said to be \emph{positive definite} if
\begin{equation*}
%\label{Bposdef}
\sum_{i, j=1}^n b_i \, T_{g_i^{-1}g_j} (a_i^* a_j) \, b_j^* \, \in  (B_e)^+
\end{equation*}
for all $n\in \Naturali$, $g_1, \ldots, g_n \in G$ and all  $a_i \in A_{g_i}$, $b_i\in B_{\varphi(g_i)}$, $i=1, \ldots, n$.
Equivalently, cf.~\cite[Remark 3.4]{BeCo7}, for every  $n\in \Naturali$, $\mathbf{g}= (g_1, \ldots, g_n) \in G^n$ and $a_i \in A_{g_i}$, $i=1, \ldots, n$, 
  the matrix $\big[ T_{g_i^{-1}g_j} (a_i^* a_j)\big]$ is positive in the $C^*$-algebra $M_{\varphi(\mathbf{g})}(\B)$, where $\varphi(\mathbf{g}):= (\varphi(g_1), \ldots, \varphi(g_n)) \in H^n$. 

The following result is \cite[Theorem 3.11]{BeCo7}: 

\begin{theorem} \label{PDCP}
Let $\A=(A_g)_{g \in G}$ and $\B=(B_h)_{h \in H}$ be Fell bundles and let $T=(T_g)_{g\in G}$ be an $\A$-$\varphi$-$\B$ bundle map, where $\varphi \in {\rm Hom}(G,H)$.
Then  $T$ is positive definite if and only if $T$ is full and  $\Phi_T: C^*(\A) \to C^*(\B)$ is completely positive.
\end{theorem}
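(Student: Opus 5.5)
The plan is to prove the two implications separately, working throughout in the matrix $C^*$-algebras $M_{\mathbf g}(\A)$ and $M_{\varphi(\mathbf g)}(\B)$.

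\emph{($\Leftarrow$)} Assume $T$ is full and $\Phi_T$ is completely positive. Fix $\mathbf g=(g_1,\ldots,g_n)$ and $a_i\in A_{g_i}$. The element $X=[\widehat j^\A_{g_i^{-1}g_j}(a_i^*a_j)]\in M_n(C^*(\A))$ factors as $C^*C$, where $C$ is the matrix whose only nonzero row is the first, with entries $\widehat j^\A_{g_j}(a_j)$. Hence $X\geq 0$, and complete positivity gives $\Phi_T^{(n)}(X)=[\widehat j^\B_{\varphi(g_i^{-1}g_j)}(T_{g_i^{-1}g_j}(a_i^*a_j))]\geq 0$ in $M_n(C^*(\B))$. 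Now take $b_i\in B_{\varphi(g_i)}$ and put $D=\mathrm{diag}(\widehat j^\B_{\varphi(g_i)}(b_i))$. Then
\[
D\,\Phi_T^{(n)}(X)\,D^*\geq 0 .
\]
Compressing to the sum of all entries, i.e.\ computing $v^*(\cdot)v$ with $v=(1,\ldots,1)^t$ in a unitization, gives $\widehat j^\B_e\big(\sum_{i,j} b_iT_{g_i^{-1}g_j}(a_i^*a_j)b_j^*\big)\geq 0$ in $C^*(\B)$. Since $\widehat j^\B_e$ is an isometric, hence injective, $*$-homomorphism on $B_e$, its image is a $C^*$-subalgebra, and the element is positive in $B_e$.

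\emph{($\Rightarrow$)} Assume $T$ is positive definite. The strategy is a Stinespring/GNS-type construction.
\begin{itemize}
\item On $C_c(\A)\odot B_e$, or more naturally on finite sums of pairs $a\otimes b$ with $a\in A_g$, $b\in B_{\varphi(g)}^{*}$, define a $B_e$-valued form using positive definiteness:
\[
\langle a\otimes b,\,a'\otimes b'\rangle=b^*T_{g^{-1}g'}(a^*a')b' .
\]
Positive definiteness makes this form positive.
\item Separating and completing yields a Hilbert $B_e$-module, and then, by using the fibres of $\B$, a Hilbert $C^*(\B)$-module $\mathcal X$.
\item The left action of $A_h$ by multiplication $a\otimes b\mapsto ha\otimes b$ should be bounded and adjointable, giving a $*$-representation of $\A$ and hence one of $C^*(\A)$ on $\mathcal X$.
\item $\Phi_T$ should then appear in the form $V^*\pi(\cdot)V$, possibly after compressing by the conditional expectation onto $B_e$. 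This yields boundedness, i.e.\ fullness, together with complete positivity.
\end{itemize}

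An alternative, probably cleaner, route for this direction is the following. First show that $\phi_T^{(n)}$ maps positive elements of the form $F^*\star F$ in $M_n(C_c(\A))$ to positives in $M_n(C^*(\B))$. Expanding $F^*\star F$ reduces this exactly to the matrices $[T_{g_i^{-1}g_j}(a_i^*a_j)]$ being positive in $M_{\varphi(\mathbf g)}(\B)$. One then needs an embedding of $M_{\varphi(\mathbf g)}(\B)$ into $M_n(C^*(\B))$ that is positivity preserving; it is a $*$-homomorphism, namely entry-wise $\widehat j^\B$ conjugated by the group-labelled indexing.

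\textbf{Main obstacle.} The hard step is boundedness of $\phi_T$ in the universal norms, i.e.\ fullness. Positivity on $C_c$ alone does not directly give norm bounds, since $C_c(\A)$ is not closed. I would handle it by obtaining $\|\phi_T(f)\|\le \|T_e\|\,\|f\|_{\rm u}$ from the Stinespring form: every $*$-representation of $C_c(\A)$ extends through the universal norm. So the left multiplication representation on $\mathcal X$ is contractive for $\|\cdot\|_{\rm u}$, while $V$ has norm at most $\|T_e\|^{1/2}$. The delicate point is checking that left multiplication by $a\in A_h$ is bounded on the pre-Hilbert module. I would do this via the inequality $a^*a\le\|a\|^2$ in $A_e$, the positive definiteness applied to the modified family obtained by inserting $(\|a\|^2-a^*a)^{1/2}$, and the fact that the product with this square root still lies in the appropriate fibre.
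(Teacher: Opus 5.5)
The paper does not prove this statement; it quotes it from \cite[Theorem 3.11]{BeCo7}. Your argument therefore has to stand on its own, and in substance it does.

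Your $(\Leftarrow)$ direction is complete as written. The factorization $X=C^*C$, the conjugation by $D$, the compression by $(1,\ldots,1)^t$ and the injectivity of $\widehat j^\B_e$ are all fine.

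In $(\Rightarrow)$, your ``alternative route'' correctly reduces everything to fullness. Split the entries of $F$ into homogeneous pieces; repeated group labels in $\mathbf g$ are harmless. Positivity of $[T_{g_i^{-1}g_j}(a_i^*a_j)]$ in $M_{\varphi(\mathbf g)}(\B)$, the entrywise $\widehat j^\B$ $*$-homomorphism into $M_N(C^*(\B))$, and a block-summing compression then give $\phi_T^{(n)}(F^*\star F)\geq 0$. Once $\phi_T$ is bounded, density of $M_n(C_c(\A))$ in $M_n(C^*(\A))$ yields complete positivity. The square-root trick is the right tool for boundedness.

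Two points in your Stinespring step need repair.

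\emph{The module.} The span of the $a\otimes b$ with $b^*\in B_{\varphi(g)}$ is not invariant under left multiplication by $A_h$ when $\varphi(h)\neq e$. If you spread the construction over all fibres $X_k$, you need positivity of $\sum_{i,j}c_iT_{g_i^{-1}g_j}(a_i^*a_j)c_j^*$ for $c_i\in B_{k\varphi(g_i)}$. That is not literally the definition, though it follows by letting $M_{\varphi(\mathbf g)}(\B)=M_{k\varphi(\mathbf g)}(\B)$ act on $\bigoplus_i B_{\varphi(g_i)^{-1}k^{-1}}$. It is simpler to feed your alternative route into the construction:
\begin{itemize}
\item On $C_c(\A)\odot C^*(\B)$ set $\langle f\otimes x,f'\otimes x'\rangle=x^*\phi_T(f^*\star f')x'$. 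This is positive because $[\phi_T(f_i^*\star f_j)]\geq 0$ in $M_n(C^*(\B))$.
\item For $c\in A_k$, let $d=(\|c\|^2-c^*c)^{1/2}$ in the unitization of $A_e$ and write $(d\cdot f)(g)=d\,f(g)$. Then $\|c\|^2 f^*\star f'-(j^\A_k(c)\star f)^*\star(j^\A_k(c)\star f')=(d\cdot f)^*\star(d\cdot f')$.
\item Hence left multiplication is bounded and adjointable, and $\|\pi(f)\|\le\|f\|_{\rm u}$.
\end{itemize}

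\emph{The operator $V$.} No units are assumed, so there is no vector $1\otimes 1$, and in general no $V$ with $\Phi_T=V^*\pi(\cdot)V$. Use an approximate unit $(u_\lambda)$ of $A_e$ instead:
\[
x^*\phi_T\big(j^\A_e(u_\lambda)\star f\star j^\A_e(u_\lambda)\big)x'=\big\langle j^\A_e(u_\lambda)\otimes x,\,\pi(f)\big(j^\A_e(u_\lambda)\otimes x'\big)\big\rangle,\qquad \|j^\A_e(u_\lambda)\otimes x\|^2\le\|T_e\|\,\|x\|^2 .
\]
The left-hand side tends to $x^*\phi_T(f)x'$, since $u_\lambda a u_\lambda\to a$ in every $A_g$. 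Letting $x=x'$ run through an approximate unit of $C^*(\B)$ gives $\|\phi_T(f)\|\le\|T_e\|\,\|f\|_{\rm u}$.

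The compression by the conditional expectation onto $B_e$ that you mention plays no role. It would in fact discard the components of $\phi_T(f)$ outside the unit fibre.

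With these two repairs you have a complete, self-contained proof. It also yields the explicit estimate $\|\Phi_T\|\le\|T_e\|$, which the citation in the paper does not display.
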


Next, we recall the definition of a Hilbert $\B$-bundle.
\begin{definition}[\cite{AF},\cite{ABF1}]\label{rHilbertBbundle} Let $\B=(B_g)_{g\in G}$ be a Fell bundle over $G$.
 A (\emph{right}) \emph{Hilbert $\B$-bundle}\footnote{We will not consider \emph{left} Hilbert $\B$-bundles in this paper. Hence, 
 by a Hilbert $\B$-bundle, we will always mean a right Hilbert $\B$-bundle.}  is a family of complex Banach spaces $\X = (X_r)_{r\in G}$ indexed over $G$
such that if $\X$ also denotes the disjoint union of the $X_r$'s,  we have maps \[(x, b) \mapsto x b : \X \times \B \to \X, \, \text{ and }\, (x, y) \mapsto \langle x, y\rangle_\B : \X \times \X \to \B,\] satisfying the following conditions: 
\begin{itemize}
\item[(1)] For each $r, s\in G$, we have $X_rB_s \subseteq X_{rs}$, and the map  $\,(x,b)\mapsto xb$ from $X_r \times B_s$ into $X_{rs}$ is bilinear.
\item[(2)] For each $r, s\in G$, we have  $\langle X_r,X_s\rangle_\B \subseteq B_{r^{-1}s}$ and, for each $x\in X_r$,  the map $  \, y\mapsto \langle x,y\rangle_\B$ from $X_s$ into $B_{r^{-1}s}$ is linear. 

\smallskip \noindent (Hence, in particular, we have $\langle X_r,X_r\rangle_\B \,\subseteq \,B_e$ for every $r\in G$.)
\item[(3)] $\langle x,yb\rangle_\B = \langle x,y\rangle_\B\, b$  and $(\langle x,y\rangle_\B)^* = \langle y,x\rangle_ \B$ for all $x,y \in \X$ and $b \in \B$.
\item[(4)] $\langle x,x\rangle_\B \geq  0$ in $B_e$ for all $x \in \X$, and $\langle x,x\rangle_\B = 0$ implies $x = 0$. 
\item[(5)] 
For each $r\in G$, the norm $x\mapsto \|x\|$ on $X_r$ satisfies that $\|x\|=\| \langle x,x\rangle_\B \|^{1/2}$ for all $x\in X_r$.
\end{itemize} 
Note that for each $r \in G$, conditions (1)-(4) imply that $X_r$ is an  inner product $B_e$-module, hence that 
$x\mapsto \| \langle x,x\rangle_\B \|^{1/2}$ gives a norm on $X_r$ (cf.~\cite{La1}).  
Since $X_r$ is complete by assumption, it follows from 
condition (5) that $X_r$ is a  Hilbert $B_e$-module. 
Moreover,  it can be shown  (cf.~\cite[Lemma 2.7]{AF}) that for all $x, y\in \X$ and $b,c\in \B$, we  also have
\begin{itemize}
\item[(a)] \,$\langle xb,y\rangle_\B =b^*\langle x,y\rangle_\B$,
\item[(b)] \, $\|xb\| \leq \|x\| \,\|b\|$,
\item[(c)] \, $\|\langle x,y\rangle_\B\| \leq \|x\| \,\|y\|$, 
\item[(d)] \, $(xb)c = x(bc)$.
\end{itemize}
\end{definition}

\begin{remark} \label{pre_H} 
If $\X = (X_r)_{r\in G}$ is a family of 
(complex) vector 
spaces equipped with maps $\X \times \B \to \X$ and  $\X \times \X \to \B$  satisfying conditions (1)-(3) and 
$\langle x,x\rangle_\B \geq  0$ in $B_e$ for all $x \in \X$ (where $\X$ denotes the disjoint union of the $X_r$'s), then $\X$ will be called a 
\emph{semi-inner product $\B$-bundle}.
The map $x\mapsto \|x\|:= \| \langle x,x\rangle_\B \|^{1/2}$ is then a seminorm on each $X_r$, and the proof of \cite[Lemma 2.7]{AF} makes it clear that properties (a)-(d) above still hold. After a separation and completion procedure,  one obtains from $\X$ a  Hilbert $\B$-bundle $\widetilde{\X}$ (see \cite[Proposition 5.1]{AF} for a similar result). If the map $x\mapsto \|x\|$ is already a norm on each $X_r$, that is, if (4) also holds,  then $\X$ is called an \emph{inner product $\B$-bundle}, and only completion is needed to construct $\widetilde{\X}$.
\end{remark} 

In the rest of this section, we consider a Fell bundle   $\A=(A_g)_{g\in G}$ over $G$, a Fell bundle $\B=(B_h)_{h\in H}$ over 
 $H$ and  $\varphi \in {\rm Hom}(G,H)$. 
\begin{definition}[\cite{BeCo8}]\label{BC-action-def}
 Let $\X=(X_h)_{h\in H}$ be a  Hilbert $\B$-bundle. Moreover, let $\F(\X) $ be the semigroup of maps from $\X$ into itself (w.r.t.~composition). A map $\rho: \A \to \F(\X)$ will be called an 
\emph{$(\A, \varphi)$-action on $\X$} 
when it satisfies the following conditions:
\begin{itemize}
\item[(i)] for each $g\in G$ and $h\in H$, we have $\rho(A_g)X_h \subseteq X_{\varphi(g)h}$, and the map
$(a, x) \mapsto \rho(a)x $ from $A_g \times X_h$ into $X_{\varphi(g)h}$ is bilinear;
\item[(ii)] $\rho(aa') = \rho(a)\rho(a')$ for all $a, a'\in \A$;
\item[(iii)] $\langle \rho(a) x, y\rangle_\B = \langle x, \rho(a^*) y\rangle_\B$ for all $a \in \A$ and $x, y\in \X$;
\item[(iv)]  $(\rho(a)x)b = \rho(a)(x b)$ for all $a\in \A$, $b\in \B$ and $x \in \X$.
\end{itemize}
When $G=H$ and $\varphi={\rm id}_G$, we will just say that $\rho$ is an 
\emph{$\A$-action} on the  Hilbert $\B$-bundle $\X$. 
\end{definition}

\begin{remark} \label{pre-act-def} One may define an $(\A, \varphi)$-\emph{pre-action} $\rho_0$ on a semi-inner product $\B$-bundle $\X^0= (X_h^0)_{h\in H}$ to be a map $\rho_0: \A\to \F(\X^0)$ satisfying the analogue of conditions  (i)-(iv) in Definition \ref{BC-action-def} and also that the inequality
\[ \langle \rho_0(a)x, \rho_0(a) x\rangle_\B \, \leq \, \|a\|^2  \langle x, x\rangle_\B\]
holds for all $a\in \A$ and $x \in \X^0$. Then it is not difficult to check that $\rho_0$ naturally induces an $(\A, \varphi)$-action $\rho$ on the Hilbert $\B$-bundle $\X$ obtained as the Hausdorff completion of $\X^0$.
\end{remark}

\begin{remark} \label{action-adjoint} 
We note that condition (iii) implies that if $a\in A_e$ and $h\in H$, then the restriction $\rho_h(a)$ of $\rho(a)$ to $X_h$ 
is a linear map from the Hilbert $B_e$-module $X_h$ into itself
which is adjointable, with $\rho_h(a)^* = \rho_h(a^*)$. One readily sees that the map $\rho_h:A_e \to \L_{B_e}(X_h)$ mapping each $a\in A_e$ to $\rho_h(a)$ is a $*$-homomorphism. 
 \end{remark} 
 
\begin{lemma}\label{Lance-like}
  Let $\X = (X_h)_{h\in H}$ be a  Hilbert $\B$-bundle and $\rho$ be an $(\A, \varphi)$-action on $\X$.
  Let $g\in G$,  $a\in A_g$, $\mathbf{g}= (g_1, \ldots, g_n) \in G^n$  and $x_j \in X_{\varphi(g_j)}$ for $j =1, \ldots, n$.  

\smallskip \noindent Set $\varphi(\mathbf{g}):= (\varphi(g_1), \ldots, \varphi(g_n))\in H^n$, $R := \big[ \langle x_i, x_j\rangle_\B\big]$ and $S:= \big[ \langle \rho(a) x_i, \rho(a) x_j\rangle_\B\big]$. 

\smallskip Then  $R, S\in M_{\varphi(\mathbf{g})}(\B)^+$ and 
 \[S \, \leq \|a\|^2\, R.\]
\end{lemma}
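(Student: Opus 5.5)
We will realize both matrices as Gram matrices of operators and reduce the inequality to the standard fact that a $*$-homomorphism is contractive. First note that $\langle x_i,x_j\rangle_\B\in B_{\varphi(g_i)^{-1}\varphi(g_j)}$, so $R\in M_{\varphi(\mathbf{g})}(\B)$. Moreover $\rho(a)x_j\in X_{\varphi(g)\varphi(g_j)}$, so $\langle \rho(a)x_i,\rho(a)x_j\rangle_\B\in B_{\varphi(g_i)^{-1}\varphi(g)^{-1}\varphi(g)\varphi(g_j)}$, which is the same fibre. Hence $S\in M_{\varphi(\mathbf{g})}(\B)$ too.

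For positivity we use the representation $L$ on the Hilbert $B_e$-module $B_{\varphi(\mathbf{g})}=\bigoplus_i B_{\varphi(g_i)^{-1}}$. For $\xi=(b_1,\dots,b_n)$ with $b_i\in B_{\varphi(g_i)^{-1}}$, axioms (3) and (a) give
\[\langle \xi, L_R\xi\rangle_{B_e}=\sum_{i,j} b_i^*\langle x_i,x_j\rangle_\B b_j=\Big\langle \sum_i x_ib_i,\sum_j x_jb_j\Big\rangle_\B\geq 0,\]
where $\sum_i x_i b_i\in X_e$. Thus $L_R\ge 0$ in $\L_{B_e}(B_{\varphi(\mathbf{g})})$. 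Since $R\mapsto L_R$ is an injective $*$-homomorphism between $C^*$-algebras, its image is a $C^*$-subalgebra, and positivity in that subalgebra is the same as positivity in $\L_{B_e}$. Therefore $R\ge0$. The same computation with $y_i=\rho(a)x_i$ shows $S\ge 0$.

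For the inequality, axiom (iv) gives $\sum_i \rho(a)x_i b_i=\rho(a)y$ with $y:=\sum_i x_ib_i\in X_e$. Hence
\[\langle\xi,L_S\xi\rangle=\langle\rho(a)y,\rho(a)y\rangle_\B=\langle y,\rho(a^*a)y\rangle_\B,\]
using (ii) and (iii). Now $a^*a\in A_e$, and by Remark \ref{action-adjoint} the map $\rho_e:A_e\to\L_{B_e}(X_e)$ is a $*$-homomorphism, hence contractive. Since $a^*a\le\|a\|^2$ in the unitization, we get $\rho_e(a^*a)\le\|a\|^2\,\mathrm{id}$; explicitly, $\|a\|^2-\rho_e(a^*a)=c^*c$ for a suitable $c$ in the unitization of $\rho_e(A_e)$. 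Therefore
\[\langle\xi,L_S\xi\rangle\le \|a\|^2\langle y,y\rangle_\B=\|a\|^2\langle \xi,L_R\xi\rangle\]
for every $\xi$. An adjointable operator $T$ with $\langle\xi,T\xi\rangle\ge0$ for all $\xi$ is positive (Lance), so $L_{\|a\|^2R-S}\ge 0$. Transferring back through the injective $*$-homomorphism $R\mapsto L_R$ gives $S\le\|a\|^2R$.

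The main subtlety is the order-theoretic transfer, namely that $\langle \xi,T\xi\rangle\ge 0$ for all $\xi$ implies $T\ge0$ in $\L_{B_e}$, and that this positivity passes back to $M_{\varphi(\mathbf{g})}(\B)$. The first is Lance's Lemma 4.1. The second holds because injective $*$-homomorphisms between $C^*$-algebras preserve and reflect positivity.
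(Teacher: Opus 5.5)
Your proof is correct, but it follows a different route from the paper's. The paper proves nothing from scratch here. It imports the positivity of $R$ and $S$ from Lemma 3.8 of [BeCo8], and the inequality for $a\in A_e$ from Lemma 5.5 of [BeCo8]. For a general $a\in A_g$ it then chooses $c\in A_e$ with $c^*c=a^*a$ and observes that $\langle\rho(a)x_i,\rho(a)x_j\rangle_\B=\langle\rho(c)x_i,\rho(c)x_j\rangle_\B$. Thus $S$ is the matrix built from $c$, and the $A_e$-case gives $S\le\|c\|^2R=\|a\|^2R$.

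You instead argue directly. Testing $L_R$ and $L_S$ against vectors $\xi=(b_i)$ of $B_{\varphi(\mathbf g)}$ collapses both matrices to single vectors, namely $y=\sum_i x_ib_i\in X_e$ and $\rho(a)y\in X_{\varphi(g)}$. Lance's positivity criterion and the faithfulness of $R\mapsto L_R$ then reduce everything to one inequality in $X_e$: $\langle y,\rho_e(a^*a)y\rangle_\B\le\|a\|^2\langle y,y\rangle_\B$.

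Both arguments rest on the same fact: $a^*a$ lies in $A_e$ and acts on a fibre as a positive operator of norm at most $\|a\|^2$. Yours treats $a\in A_g$ directly, without a square root $c$, and it also establishes the Gram-matrix positivity and the $A_e$-case that the paper imports. The paper's version is shorter only because those results are already available.

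One point you gloss over is harmless. For $S\ge 0$, the vector $\sum_i(\rho(a)x_i)b_i$ lies in $X_{\varphi(g)}$ rather than $X_e$. The inner product is positive on every fibre, and in any case $\langle\xi,L_S\xi\rangle=\langle\rho(a)y,\rho(a)y\rangle_\B$.
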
 
\begin{proof}
 We know that this result holds when $g=e$ and $a \in A_e$, cf.~\cite[Lemma 5.5]{BeCo8}. The general case can be deduced as follows. We first note that 
for every $i,j$ we have 
\[ \langle \rho(a) x_i, \rho(a) x_j\rangle_\B \in \langle  X_{\varphi(g)\varphi(g_i)}, X_{\varphi(g)\varphi(g_j)}\rangle_\B \subseteq B_{(\varphi(g)\varphi(g_i))^{-1}\varphi(g)\varphi(g_j)}
= B_{\varphi(g_i)^{-1}\varphi(g_j)},\]
so $R, S \in M_{\varphi(\mathbf{g})}(\B)$, and $R, S\geq 0$ by \cite[Lemma 3.8]{BeCo8}.

Next, since $a^*a \in (A_e)^+$, we can find some $c\in A_e$ such that $a^*a = c^*c$. Then for every $i, j$ we have
\[   \langle \rho(a) x_i, \rho(a) x_j\rangle_\B =  \langle  x_i, \rho(a^*a) x_j\rangle_\B =  \langle  x_i, \rho(c^*c) x_j\rangle_\B =  \langle \rho(c) x_i, \rho(c) x_j\rangle_\B.\]
Thus we get 
\[S= \big[ \langle \rho(c) x_i, \rho(c) x_j\rangle_\B\big] \leq \|c\|^2 \, R = \|a\|^2 \, R.\]
\end{proof}

\begin{remark} \label{AF-action}  Let $\X = (X_h)_{h\in H}$ be a  Hilbert $\B$-bundle, $\rho$ be an $(\A, \varphi)$-action on $\X$ and $\mathbf{g}= (g_1, \ldots, g_n) \in G^n$. For later use, we mention that
one may construct an action of  the Abadie-Ferraro $C^*$-algebra $M_{\mathbf{g}}(\A) $ on a certain Hilbert $B_e$-module.
Indeed, set $\varphi(\mathbf{g}):= (\varphi(g_1), \ldots, \varphi(g_n))$
and  $\mathbf{X}_{\varphi(\mathbf{g})}:=\bigoplus_{i=1}^n X_{\varphi(g_i)^{-1}}$ (considered as a Hilbert $B_e$-module). 

Then it is easy to check that we get a $*$-homomorphism $L^{\rho}:  
M_{\mathbf{g}}(\A) \to \mathcal{L}_{B_e}(\mathbf{X}_{\varphi(\mathbf{g})})$ by setting
\[ L^\rho(R)(x_1, \ldots, x_n):= (x'_1, \ldots, x'_n), \ \text{ where } x'_i := \sum_{j=1}^n \rho(r_{ij})x_j \in X_{\varphi(g_i)^{-1}} \text { for each }i = 1, \ldots, n, \] 
for each $R=[r_{ij}] \in M_{\mathbf{g}}(\A)$ and $(x_1, \ldots, x_n) \in \mathbf{X}_{\varphi(\mathbf{g})}$.
\end{remark}

\medskip  
Finally, we recall that an important feature of
actions of Fell bundles is that they give rise to positive definite bundle maps in the following way (cf.~\cite[Proposition 5.3]{BeCo8}). 
Let  $\rho $ be an $(\A, \varphi)$-action  on a  Hilbert $\B$-bundle $\X$ and $x \in X_e$. For each $g\in G$, 
let $T_g: A_g \to B_{\varphi(g)}$ be given by \[T_g(a):= \langle x, \rho(a) x\rangle_\B\] for every $a\in A_g$. Then  
$T := (T_g)_{g\in G}$  is a positive definite $\A$-$\varphi$-$\B$ bundle map.
  
  \section{Direct sums of actions of Fell bundles} \label{DirectSum}
 Let $\A=(A_g)_{g\in G}$ and $\B=(B_h)_{h\in H}$ be Fell bundles over $G$ and $H$, and $\varphi:G\to H$ be a homomorphism. There is a natural way of forming the direct sum of $(\A, \varphi)$-actions on Hilbert $\B$-bundles.
  We consider first finite direct sums. 
  
 Let $\X^j = (X^j_g)_{g\in G}$ be Hilbert $\B$-bundles for $j=1, \ldots, n$. 
  For each $g\in G$, let $\bigoplus_{j=1}^n X^j_g$ be the direct sum of $X_g^1, \ldots, X_g^n$, considered as vector spaces, and equip $\bigoplus_{j=1}^n \X^j 
 :=\coprod_{g\in G}\big(\bigoplus_{j=1}^n X^j_g\big)$ (disjoint union) with the operations
 \[(\oplus_{j=1}^n \,x_j)\cdot b = \oplus_{j=1}^n \, (x_j\cdot b), \quad \big\langle \oplus_{j=1}^n x_j, \oplus_{j=1}^n \,y_j\big\rangle_\B = \sum_{j=1}^n \,\langle  x_j,  y_j \rangle_\B,\]
 where $b\in \B$, $g, g'\in G$,  $x_j\in X_g^j$ and $y_j\in X_{g'}^j$ for all $j=1, \ldots, n$. One readily verifies that  each $\bigoplus_{j=1}^n X^j_g$ is complete with respect to the norm 
 \[\|\oplus_{j=1}^n x_j\|:= \big\| \big\langle \oplus_{j=1}^n x_j, \oplus_{j=1}^n  \,x_j \big\rangle_\B\big\|^{1/2} = \Big\|\sum_{j=1}^n\, \langle  x_j,  x_j \rangle_\B\Big\|^{1/2},\]
and that $\bigoplus_{j=1}^n \X^j$ becomes a  Hilbert $\B$-bundle. 

Now, if $\rho_j$ is an $(\A, \varphi)$-action on $\X^j$ for $j=1, \ldots, n$, 
 it is straightforward to check that we get an $(\A, \varphi)$-action $\bigoplus_{j=1}^n \, \rho_j$ on $\bigoplus_{j=1}^n \X^j$ by setting
 \[ \big[(\bigoplus_{j=1}^n \, \rho_j)(a)\big] (\oplus_{j=1}^n x_j) :=  \oplus_{j=1}^n\, \rho_j(a) x_j\]
 for each $a\in \A$ and $ \oplus_{j=1}^n x_j \in \bigoplus_{j=1}^n \X^j$.

More generally, we can also form direct sums of families of actions.
Assume  $(\rho_j)_{j\in J}$ is a family of $(\A, \varphi)$-actions, where each $\rho_j$ acts on a Hilbert $\B$-bundle $\X^j$. The direct sum $\bigoplus_{j\in J} \rho_j$ is defined as follows. First, for each $g \in G$, set
\[ \bigoplus_{j\in J} X_g^j := \Big\{ (x_j)_{j\in J} \mid x_j \in X_g^j \text{ for  each $j$ and }\sum_{j\in J} \langle x_j, x_j\rangle_\B \text{ is norm-convergent in } B_e\Big\},\]
which is a subspace of the unrestricted direct sum of the vector spaces $X_g^j$ for all $j\in J$.
 One can then check that $\bigoplus_{j\in J} \X^j := \coprod_{g\in G} \big(\bigoplus_{j\in J} X_g^j\big)$ becomes a Hilbert $\B$-bundle w.r.t.~the operations
 \[ (x_j)_{j\in J}\cdot b =  (x_j\cdot b)_{j\in J}, \quad \big\langle (x_j)_{j\in J},  \,(y_j)_{j\in J}\big\rangle_\B = \sum_{j\in J} \,\langle  x_j,  y_j \rangle_\B,\]
 where $b\in \B$, $g, g'\in G$,  $x_j\in X_g^j$ and $y_j\in X_{g'}^j$ for all $j\in J$. 
 To see that $\sum_{j\in J} \,\langle  x_j,  y_j \rangle_\B$ is convergent in $B_e$, one may argue in a similar way as when dealing with Hilbert $C^*$-modules (see e.g.~\cite{La1}). Indeed, let $F$ be a finite subset of $J$ and consider the Hilbert $\B$-bundle $\bigoplus_{j\in F}\X^j = \coprod_{g\in G} \big(\bigoplus_{j\in F} X_g^j\big)$. Then $(x_j)_{j\in F}, (y_j)_{j\in F}$ belong to $\bigoplus_{j\in F}\X^j$, so the Cauchy-Schwarz inequality for Hilbert $\B$-bundles gives that
 \[\big\| \big\langle  (x_j)_{j\in F}, (y_j)_{j\in F} \big\rangle_\B\big\|^2 \, \leq \, \|(x_j)_{j\in F}\|^2 \, \|(y_j)_{j\in F}\|^2,\] 
 that is,
 \[ \Big\|\sum_{j\in F} \langle x_j, y_j\rangle_\B \Big\| \, \leq \, \Big\|\sum_{j\in F} \langle x_j, x_j\rangle_\B \Big\| \, \, \Big\|\sum_{j\in F} \langle y_j, y_j\rangle_\B \Big\|.\]
 It now follows that the net $\{\sum_{j\in F} \langle x_j, y_j\rangle_\B\}_F$ satisfies the Cauchy criterion (w.r.t.~the norm in $B_{g^{-1}g'}$), hence that it is convergent (in $B_{g^{-1}g'}$).
 
 We then get an $(\A, \varphi)$-action $\bigoplus_{j\in J} \, \rho_j$ on $\bigoplus_{j\in J} \X^j$ by setting
 \[ \big[(\bigoplus_{j\in J} \, \rho_j)(a)\big] (x_j)_{j\in J} :=  (\rho_j(a) x_j)_{j\in J}\]
 for each $a\in \A$ and $(x_j)_{j\in J} \in \bigoplus_{j\in J} \X^j$. 
 
\section{Tensor products of actions} \label{tensoraction}

For clarity of exposition, we will first consider tensor products of $\B$-actions before discussing the general case.
 
 \subsection{Tensor products of $\B$-actions on Hilbert $\B$-bundles} \label{tp-b-actions}
 
 In this subsection, we let $\B=(B_g)_{g\in G}$ be a Fell bundle over $G$ and consider two $\B$-actions $\rho$ and $\sigma$ on  Hilbert $\B$-bundles $\X=(X_g)_{g\in G}$ and $\Y=(Y_g)_{g\in G}$, respectively. A natural goal is then to form a $\B$-action which is a kind of tensor product  of $\rho$ and  $\sigma$.

As it will soon become apparent, this construction is related to the following problem.
 Pick $h \in G$, $x \in X_h$ and $y\in Y_{h^{-1}}$. Then 
for each $g\in G$, we get a linear map $R_g:B_g\to B_g$ by setting
\begin{equation}\label{R-map} R_g(b) := \big\langle y , \,\sigma\big(\langle x, \rho(b) x\rangle_\B\big) y\big\rangle_\B\end{equation}
for all $b \in B_g$. Note here that  $\langle x, \rho(b) x\rangle_\B \in B_{h^{-1}g h}$, so that \[R_g(b) \in B_{(h^{-1})^{-1}(h^{-1}gh)h^{-1}} = B_g\,.\]  Thus $R =(R_g)_{g\in G}$ is a  $\B$-bundle map, and one may wonder whether $R$ is positive definite.
 When $h=e$, this can be answered by first observing that if
 $S$ and $T$ are positive definite $\B$-bundle maps, then the $\B$-bundle map $S\cdot T:= (S_g\circ T_g)_{g\in G}$ is  positive definite too. (This can be checked directly, but it also follows from \cite[Corollary 3.18 and Remark 3.19]{BeCo7}).
Assuming that $h=e$, so $x \in X_e$ and $y\in Y_{e}$, let then $S=(S_g)_{g\in G}$ and $T=(T_g)_{g\in G}$ be the positive definite $\B$-bundle maps given by \[
S_g (b) = \langle y , \,\sigma(b)y\big\rangle_\B, \quad T_g(b) = \langle x, \rho(b) x\rangle_\B\]
for all $b\in B_g$. We  get that  
\[ R_g(b) = \big\langle y , \,\sigma\big(\langle x, \rho(b) x\rangle_\B\big) y\big\rangle_\B = S_g (T_g(b))= (S\cdot T)_g(b)\]
for all $b\in B_g$. Hence,  $R=S\cdot T$, so $R$ is positive definite. 

Below, after constructing 
$\rho\dot\otimes \sigma$,
the  tensor product of $\rho$ and  $\sigma$, 
we will see that the $\B$-bundle map $R$, defined as in (\ref{R-map}), can be written as a suitable diagonal coefficient of $\rho\dot\otimes \sigma$, which will give a way to deduce that $R$ is positive definite, not only when $h=e$, but for any choice of $h \in G$.

 To be able to form a tensor product $\B$-action $\rho \dot\otimes \sigma$, we must first have a certain (internal) tensor product Hilbert $\B$-bundle 
 $\X\otimes_\sigma \Y$ at our disposal. 
 Our construction of such a bundle  is largely inspired by the one given by Abadie and Ferraro in \cite[Section 5.1]{AF}, where they consider 
 so-called equivalence bundles between Fell bundles over a locally compact group. In our situation, many aspects of their discussion apply (and simplify). We will therefore be somewhat sketchy in our presentation.
  
For each $b \in B_e$ and $h \in G$, the restriction of $\sigma(b)$ to $Y_h$ is an adjointable map from $Y_h$ into itself (considered as a  Hilbert $B_e$-module), which we denote by $\sigma_h(b)$, cf.~Remark \ref{action-adjoint}. 

Moreover, the map $b\mapsto \sigma_h(b)$ is a $*$-homomorphism $\sigma_h$ from $B_e$ into $\L_{B_e}(Y_h)$.
 
 Thus, for any $g, h \in G$,  considering $X_g$ and $Y_h$ as  Hilbert $B_e$-modules, their internal tensor product $X_g \otimes_{\sigma_h} Y_h$ is then a  Hilbert $B_e$-module, cf.~\cite{La1}, such that
\[(x \cdot b) \dot\otimes y = x \dot\otimes \sigma_h(b)y \text{  and  } \langle x \dot\otimes y, x' \dot\otimes y' \rangle_{B_e} = \langle y, \sigma_h(\langle x, x' \rangle_\B) y' \rangle_\B\] for all  $x,x' \in X_g$,  $y,y' \in Y_h$ and $b \in B_e$. 
  
  For every $r \in G$, let $Z^0_r$ be the vector space given by
  \[Z^0_r := \Big\{ \xi \in \prod_{h \in G} (X_{r h^{-1}} \otimes_{\sigma_h} Y_h) \mid \xi \text{ is finitely supported}\Big\}\]
 and set $\mathcal{Z}^0 :=\coprod_{r\in G} Z^0_r$ (disjoint union).

  We will first define a right action of $\B$ on $\mathcal{Z}^0$, $(z,b)\mapsto z\cdot b$, and a $\B$-valued semi-inner product on $\mathcal{Z}^0$,
 $(z, z') \mapsto \langle z, z'\rangle_\B$, which will turn $\mathcal{Z}^0=(Z^0_r)_{r\in G}$ into a semi-inner product $\B$-bundle.
 
 Let $g \in G$ and $b \in B_g$. For $s, t \in G$, let $x \in X_{s}$, $y \in Y_t$.
  Note  that $y \cdot b \in Y_{tg}$, so we can define 
  \[ (x \dot\otimes y) \cdot b :=  x \dot\otimes (y \cdot b) \in X_{s} \otimes_{\sigma_{tg}} Y_{tg}.\]
One may now check that the map $x\dot\otimes y \mapsto (x \dot\otimes y) \cdot b$ extends to a linear map $z\mapsto z \cdot b$ from $X_{s} \otimes_{\sigma_t} Y_t$ into $X_{s} \otimes_{\sigma_{tg}} Y_{tg}$ such that $\|z\cdot b\| \leq \|z\| \, \|b\|$ for every $z \in X_{s} \otimes_{\sigma_t} Y_t$.

We can then define a linear map  $\xi \mapsto \xi \cdot b$ from $Z^0_r$ into $Z^0_{rg}$ by setting
\[ (\xi \cdot b)(h) := \xi(hg^{-1})\cdot b \quad \text{for all } \xi \in Z^0_r, b \in B_g \text{ and } h\in G.\]
Note here that for every $h\in G$, we have
\[\xi(hg^{-1}) \in X_{r(hg^{-1})^{-1}}\otimes_{\sigma_{hg^{-1}}}Y_{hg^{-1}} = X_{rgh^{-1}}\otimes_{\sigma_{hg^{-1}}}Y_{hg^{-1}},\] so 
\[ (\xi \cdot b)(h) = \xi(hg^{-1})\cdot b \in X_{rgh^{-1}}\otimes_{\sigma_{(hg^{-1})g}}Y_{(hg^{-1})g} = X_{rgh^{-1}}\otimes_{\sigma_{h}}Y_{h},\]
showing that  $\xi \cdot b \in Z^0_{rg}$.

It is not difficult to see that the resulting map $(\xi, b)\mapsto \xi\cdot b$ from $Z_r^0\times B_g$  into $Z^0_{rg}$ is also linear in the second variable.
 
Let now  $r, r' , h, h'\in G$. Then one checks that there is a unique well-defined bilinear map 
 $(z, z')\mapsto \langle z, z' \rangle_\B$ from $ (X_{r h^{-1}} \otimes_{\sigma_h} Y_h) \times (X_{r' h'^{-1}}\otimes_{\sigma_{h'}} Y_{h'})$ into  $B_{r^{-1}r'}$ such that  \[
 \big\langle x \dot\otimes y, x' \dot\otimes y' \big\rangle_\B  = \big\langle y, \sigma(\langle x,x' \rangle_\B) y' \big\rangle_\B 
 \]
 for all $x \in X_{r h^{-1}}$, $y \in Y_h$, $x' \in X_{r' {h'}^{-1}}$ and $y' \in Y_{h'}$. Moreover, we have 
 \[\|\langle z, z' \big\rangle_\B\| \, \leq \, \|z\| \, \|z'\|\]
for all $z \in X_{r h^{-1}} \otimes_{\sigma_h} Y_h, \, z'\in X_{r' h'^{-1}} \otimes_{\sigma_{h'}} Y_{h'}$. 
This leads to a bilinear map $(\xi, \xi')\mapsto 
  \langle \xi, \xi' \big\rangle_\B$ from $Z_r^0 \times Z_{r'}^0$ into $B_{r^{-1}r'}$ given by
  \[  \langle \xi, \xi' \big\rangle_\B = \sum_{h, h'\in G}  \big\langle \xi(h), \xi'(h') \big\rangle_\B.\]
  One can now verify without much trouble that we have
  \[  \langle \xi, \xi'\cdot b \big\rangle_\B =  \langle \xi, \xi' \big\rangle_\B \, b \quad \text{ and }  \big(\langle \xi, \xi' \big\rangle_\B\big)^* =  \langle \xi', \xi \big\rangle_\B\]
  for all $\xi \in Z_r^0, \xi'\in Z_{r'}^0$ and $b \in \B$. Moreover, following the argument in the proof of \cite[Lemma 5.9]{AF}, it can be shown that $\langle \xi, \xi \big\rangle_\B \, \geq 0$ (in $B_e$) for every $\xi \in Z_r^0$ (see also Lemma \ref{ip-ineq}). Thus $Z_r^0$ is a semi-inner product $B_e$-module with a seminorm given by $\|\xi\| := \|\langle \xi, \xi \big\rangle_\B \|^{1/2}$. We also have that
  \[ \|\xi\cdot b\|\leq  \|\xi\|\, \| b\| \quad \text{ and}
  \quad  \|\langle \xi, \xi' \big\rangle_\B \| \leq \, \|\xi\| \|\xi'\|\]
  for all $\xi \in Z_r^0, \xi'\in Z_{r'}^0$ and $b\in \B$. The first inequality is quite obvious. To see the that the second  holds, set $c:= \langle \xi, \xi' \big\rangle_\B \in B_{r^{-1}r'}$, so $\xi'$ and $\xi\cdot c$ both belong to  $Z_{r'}^0$. Then, using that the Cauchy-Schwarz inequality holds in a semi-inner product $B_e$-module, we get
  \[ \|c\|^2 = \|\langle \xi, \xi' \big\rangle_\B \|^2 = \| \langle \xi', \xi \big\rangle_\B \,c\| = 
  \|\langle \xi', \xi\cdot c \big\rangle_\B \| \leq \| \xi'\| \| \xi\cdot c \| \leq \|\xi'\| \|\xi\| \|c\| , \]
 and the desired inequality follows. 
 
 The maps  $\mathcal{Z}^0\times \B \to \mathcal{Z}^0$ and  $\mathcal{Z}^0 \times \mathcal{Z}^0 \to \B$ 
  constructed above satisfy all the properties required for $\mathcal{Z}^0$ to be a semi-inner product $\B$-bundle. After separation and completion,
  cf.~Remark \ref{pre_H}, we obtain a full-fledged  Hilbert $\B$-bundle ${\mathcal Z} = (Z_r)_{r \in G}$, that we will also denote by $\X\otimes_\sigma \Y$.

Concerning notation, we will sometimes use the following convention. 
Let $ k, l,  r \in G$ be such that $kl=r$, and let $x \in X_k$, $y \in Y_l$. Then we will identify $x\dot\otimes y \in X_k\dot\otimes_{\sigma_l}Y_l$ with the image in $Z_r$ of the element $\xi_{x, y}$ in $Z^0_r$ defined by 
\[ \xi_{x,y}(h)=\begin{cases}x\dot\otimes y \quad&\text{if } h= l,\\ \,\, \,0  &\text{if } h\neq l.\end{cases}\]
Note that this identification is coherent with the right action of $\B$ and with the inner product. Indeed, if $g \in G$ and $b\in B_g$, then  
\[ (\xi_{x, y}\cdot b)(h) = \xi_{x, y}(hg^{-1})\cdot b
=\begin{cases}  (x\dot\otimes y)\cdot b  &\text{if } h=lg,\\ \, \, 0 &\text{if } h\neq lg
\end{cases}
=\begin{cases}  x\dot\otimes (y\cdot b)  &\text{if } h=lg,\\ \, \, 0 &\text{if } h\neq lg
\end{cases}
=   \xi_{x, y\cdot b}(h).
\]
Moreover, if $ k', l', r' \in G$ are such that $k'l'=r'$, and  $x' \in X_{k'}$, $y \in Y_{l'}$, then
\[ \big\langle \xi_{x, y}, \xi_{x', y'}\big\rangle_\B = \sum_{h, h'\in G}  \big\langle \xi_{x,y}(h), \xi_{x', y'}(h') 
\big\rangle_\B =\langle x\dot\otimes y, x'\dot\otimes y'\rangle_\B .\] 
 
 We can now summarize our discussion above as follows. 
\begin{proposition}
There exists a Hilbert $\B$-bundle 
 $\X\otimes_\sigma \Y = (Z_r)_{r\in G}$ such that for each $r\in G$, $Z_r$ contains a subset $\{x \dot\otimes y: x \in X_{r h^{-1}}, y \in Y_h, 
 h\in G\}$ whose span is dense in $Z_r$, with operations satisfying \[(x \dot\otimes y)\cdot b = x \dot\otimes (y\cdot b), \quad \langle x \dot\otimes y, x' \dot\otimes y'\rangle_\B = \big\langle y, \sigma(\langle x, x'\rangle_\B)y'\big\rangle_\B\] for all $g, h, h', r, r' \in G$, $b \in B_g$, $x \in X_{r h^{-1}}$,  $y \in Y_h$, $x' \in X_{r' h'^{-1}}$,  $y' \in Y_{h'}$.
 \end{proposition}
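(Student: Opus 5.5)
The proposition summarizes the construction carried out above, so the plan is to check that $\mathcal{Z}^0=(Z^0_r)_{r\in G}$ is a semi-inner product $\B$-bundle in the sense of Remark \ref{pre_H}. We then take $\X\otimes_\sigma \Y$ to be the Hilbert $\B$-bundle obtained by separation and completion. The stated formulas are then inherited from the identification $x\dot\otimes y \leftrightarrow \xi_{x,y}$ established just before the statement.

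Concretely, I would proceed as follows.

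\emph{Well-definedness.} First, for $b\in B_g$ the map $x\dot\otimes y\mapsto x\dot\otimes (y\cdot b)$ descends from the algebraic tensor product to a bounded map $X_s\otimes_{\sigma_t}Y_t\to X_s\otimes_{\sigma_{tg}}Y_{tg}$ with norm at most $\|b\|$. For this I would compute
\[\Big\langle \sum_i x_i\dot\otimes y_i b,\sum_j x_j\dot\otimes y_j b\Big\rangle = b^*\Big[\sum_{i,j}\langle y_i,\sigma(\langle x_i,x_j\rangle_\B)y_j\rangle_\B\Big] b,\]
using (a) and (iv), and bound this by $\|b\|^2$ times the inner product of $\sum_i x_i\dot\otimes y_i$. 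This also shows that $B_e$-balancing and null vectors are respected. Similarly, the $B_{r^{-1}r'}$-valued pairing is well defined and bounded. The natural route is to realize it through the Hilbert $B_e$-module $\bigoplus$ structure: the pairing of $X_{rh^{-1}}\otimes Y_h$ with $X_{r'h'^{-1}}\otimes Y_{h'}$ is a corner of the inner product on a finite direct sum twisted by suitable fibres of $\B$. Alternatively, one can use Cauchy--Schwarz after multiplying by elements of $B_{r'^{-1}r}$, as done for the second norm inequality in the text.

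\emph{Axioms (1)--(3).} These follow directly from the definitions, using properties (a)--(d) for $\X$ and $\Y$ together with conditions (iii)--(iv) of Definition \ref{BC-action-def} for $\sigma$.

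\emph{Positivity.} The main obstacle is showing $\langle\xi,\xi\rangle_\B\ge0$ for $\xi=\sum_{i=1}^n x_i\dot\otimes y_i$ with $x_i\in X_{rh_i^{-1}}$ and $y_i\in Y_{h_i}$. Write $k_i:=rh_i^{-1}$. The matrix $R=[\langle x_i,x_j\rangle_\B]$ lies in $M_{\mathbf{k}}(\B)^+$ by Lemma \ref{Lance-like}, with $\mathbf{k}=(k_1,\ldots,k_n)$, so $R=C^*C$ with $C\in M_{\mathbf{k}}(\B)$. Applying $L^\sigma$ from Remark \ref{AF-action} (with $\A=\B$, $\varphi={\rm id}$), the vector $\eta:=L^\sigma(C)(y_1,\ldots,y_n)$ makes sense after re-indexing: since $h_i=k_i^{-1}r$, the entries $y_i$ belong to the shifted fibres appropriate for $\mathbf{X}_{\mathbf{k}}$ translated by $r$. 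One then gets
\[\langle \xi,\xi\rangle_\B=\sum_{i,j}\langle y_i,\sigma(r_{ij})y_j\rangle_\B=\langle \eta,\eta\rangle \ge 0.\]
Getting the fibre bookkeeping right for this translated version of Remark \ref{AF-action} is the delicate point. This follows \cite[Lemma 5.9]{AF}.

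\emph{Completion.} With the semi-inner product $\B$-bundle established, Remark \ref{pre_H} yields $\X\otimes_\sigma\Y=(Z_r)_{r\in G}$. The images of the $\xi_{x,y}$ span a dense subspace of $Z_r$, since finitely supported elements are sums of such. The two displayed identities hold by the coherence computations for $\xi_{x,y}$ preceding the statement.
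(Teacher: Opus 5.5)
Your proposal is correct and follows the paper's own construction: show $\mathcal Z^0$ is a semi-inner product $\B$-bundle, get positivity by pushing the positive Gram matrix $[\langle x_i,x_j\rangle_\B]$ through $L^\sigma$ as in the proof of Lemma \ref{ip-ineq} (your factorization $R=C^*C$ is the same argument), obtain the cross-fibre norm bound by the Cauchy--Schwarz trick, and then separate and complete. The fibre bookkeeping you flag is harmless, because $k_i^{-1}k_j=h_ih_j^{-1}$ gives $M_{\mathbf k}(\B)=M_{\mathbf H}(\B)$ with $\mathbf H=(h_1^{-1},\ldots,h_n^{-1})$, so $(y_1,\ldots,y_n)$ already lies in $\bigoplus_i Y_{h_i}$ exactly as in the paper; also, in the right-action step only the norm estimate $\|b^*\langle z,z\rangle_\B b\|\le\|b\|^2\|\langle z,z\rangle_\B\|$ holds (not an order inequality in $B_e$), but that estimate is all you need.
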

 
Our next goal is to define a  $\B$-action $\rho\dot\otimes \sigma$ on $\X\otimes_\sigma \Y$.
 Let $g, h, r \in G$. For $b \in B_g$, $x \in X_{r h^{-1}}$, $y \in Y_h$, so $x \dot\otimes y \in X_{rh^{-1}} \otimes_{\sigma_h} Y_h$, set 
 \[
(\rho(b)\otimes \iota) (x \dot\otimes y) := (\rho(b) x) \dot\otimes y\, \in \,X_{(gr)h^{-1}} \otimes_{\sigma_h} Y_h.
 \]
 Note that if $r' , h' \in G$ and $x' \in X_{r' h'^{-1}}$, $y' \in Y_{h'}$, then
\begin{equation}\label{rho-adj}
\begin{split} 
\big\langle (\rho(b)\otimes \iota) (x \dot\otimes y),  x' \dot\otimes y' \big\rangle_\B & = \big\langle (\rho(b) x) \dot\otimes y , x' \dot\otimes y' \big\rangle_\B \\
& = \big\langle y, \sigma(\langle \rho(b) x, x'\rangle_\B )y' \big\rangle_\B \\
& =  \big\langle y, \sigma(\langle x, \rho(b^*) x'\rangle_\B )y' \big\rangle_\B \\
& = \big\langle  x \dot\otimes y, (\rho(b^*)x') \dot\otimes y' \big\rangle_\B \\
& = \big\langle  x \dot\otimes y, \rho_*(b^*)  (x' \dot\otimes y') \big\rangle_\B.
\end{split}
\end{equation}
 Extending by linearity we obtain a  map $z \mapsto (\rho(b)\otimes \iota)(z)$ from  $X_{rh^{-1}} \otimes_{\sigma_h} Y_h$ to $X_{(gr)h^{-1}} \otimes_{\sigma_h} Y_h$. To show that this map is well-defined, we need to show that if $\sum_{j=1}^n x_j \dot\otimes y_j = 0$ in $X_{rh^{-1}} \otimes_{\sigma_h} Y_h$, then $\sum_{j=1}^n (\rho(b)x_j) \dot\otimes y_j = 0$ in $X_{(gr)h^{-1}} \otimes_{\sigma_h} Y_h$. This is an immediate consequence of the following lemma.
 \begin{lemma} Let $g, h, r\in G$, $b \in B_g$,  $x_i\in X_{rh^{-1}}$ and $y_i \in Y_h$ for $i=1, \ldots n$. 
Then, with 
\[\text{$z := \sum_{ j=1}^n x_j \dot\otimes y_j \in X_{rh^{-1}} \otimes_{\sigma_h} Y_h$ and $z'= \sum_{j=1}^n (\rho(b)x_j) \dot\otimes y_j \in X_{(gr)h^{-1}} \otimes_{\sigma_h} Y_h$},\] we have
 $ \langle z', z'\rangle_\B \leq \|b\|^2 \, \langle z, z\rangle_\B  \, \text{ $($in $(B_e)^+)$, hence}\, \|z'\| \leq \|b\| \, \|z\|$.
 \end{lemma}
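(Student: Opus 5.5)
Both inner products are computed from the formula for the internal tensor product $X_{rh^{-1}}\otimes_{\sigma_h} Y_h$ and its analogue for $X_{(gr)h^{-1}}\otimes_{\sigma_h} Y_h$:
\[
\langle z,z\rangle_\B=\sum_{i,j=1}^n \big\langle y_i,\sigma(\langle x_i,x_j\rangle_\B)y_j\big\rangle_\B,\qquad
\langle z',z'\rangle_\B=\sum_{i,j=1}^n \big\langle y_i,\sigma(\langle \rho(b)x_i,\rho(b)x_j\rangle_\B)y_j\big\rangle_\B .
\]
Every entry $\langle x_i,x_j\rangle_\B$ and $\langle \rho(b)x_i,\rho(b)x_j\rangle_\B$ lies in $B_e$, since all $x_i$ lie in the same fibre $X_{rh^{-1}}$ and all $\rho(b)x_i$ lie in $X_{grh^{-1}}$. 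So $R:=[\langle x_i,x_j\rangle_\B]$ and $S:=[\langle \rho(b)x_i,\rho(b)x_j\rangle_\B]$ are elements of $M_n(B_e)$; this is $M_{\mathbf{k}}(\B)$ with $\mathbf{k}=(k,\dots,k)$ for any $k$.

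The first step is to obtain $0\le S\le \|b\|^2R$ in $M_n(B_e)$. I would do this exactly as in Lemma \ref{Lance-like}, with $G=H$ and $\varphi={\rm id}$. Write $b^*b=c^*c$ with $c\in B_e$. Then
\[
\langle \rho(b)x_i,\rho(b)x_j\rangle_\B=\langle \rho(c)x_i,\rho(c)x_j\rangle_\B ,
\]
which reduces the claim to the unit-fibre case. In that case it is the standard Lance-type inequality for the $*$-homomorphism $\rho_{rh^{-1}}:B_e\to\L_{B_e}(X_{rh^{-1}})$ of Remark \ref{action-adjoint}, i.e.\ \cite[Lemma 5.5]{BeCo8}. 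Equivalently, this is Lemma \ref{Lance-like} itself, once one notes that its proof only uses that the $x_j$ lie in fibres compatible with the matrix algebra. A constant index tuple works here, after shifting so that $x_j\in X_{k}$ for all $j$.

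The second step transfers this matrix inequality through $\sigma$. Put $D:=\|b\|^2R-S\ge 0$ in $M_n(B_e)$ and write $D=E^*E$ with $E=[e_{ij}]\in M_n(B_e)$. Then
\[
\sum_{i,j}\langle y_i,\sigma(d_{ij})y_j\rangle_\B=\sum_{k}\Big\langle \sum_i \sigma(e_{ki})y_i,\ \sum_j\sigma(e_{kj})y_j\Big\rangle_\B\ \ge 0 ,
\]
using that $\sigma$ is multiplicative on $B_e$ and that $\sigma(e^*)=\sigma(e)^*$ on $Y_h$ (Remark \ref{action-adjoint}). This is the usual argument showing that the inner product on an internal tensor product is positive. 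It gives $\|b\|^2\langle z,z\rangle_\B-\langle z',z'\rangle_\B\ge 0$, and taking norms yields $\|z'\|\le\|b\|\,\|z\|$.

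The main obstacle is the first step. One must check that Lemma \ref{Lance-like}, or the unit-fibre result in \cite{BeCo8}, really covers the situation where all vectors lie in the same fibre $X_{rh^{-1}}$ rather than in $X_{\varphi(g_j)}$. If this cannot simply be quoted, I would prove the unit-fibre inequality directly. The element $\|c\|^2-c^*c$ is positive in $\widetilde{B_e}$, so it equals $d^*d$ with $d$ in the unitization, and $\rho_{rh^{-1}}$ extends unitally. Hence
\[
\|c\|^2R-S=\big[\langle \rho(d)x_i,\rho(d)x_j\rangle_\B\big],
\]
which is a Gram matrix and therefore positive in $M_n(B_e)$, by positivity of Gram matrices in a Hilbert $B_e$-module.
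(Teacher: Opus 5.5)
Your proof is correct and follows the paper's own argument. You get $S\le\|b\|^2R$ in $M_n(B_e)$ from Lemma \ref{Lance-like}, which applies directly with the constant tuple $\mathbf{g}=(rh^{-1},\dots,rh^{-1})$ since then $M_{\mathbf{g}}(\B)=M_n(B_e)$, so your worry there (and the unitization fallback) is unnecessary; you then push the inequality through $\sigma_h$, where your $D=E^*E$ computation is simply the paper's amplified $*$-homomorphism $\widetilde\sigma_h:M_n(B_e)\to\L_{B_e}((Y_h)^n)$ evaluated at $\mathbf{y}=(y_1,\dots,y_n)$ and written out by hand.
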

 
 \begin{proof}
 Consider $Y_h$ as a Hilbert $B_e$-module and set $\mathbf{y}:= (y_1, \ldots, y_n) \in (Y_h)^n$. Since the map $\sigma_h: B_e \to \mathcal{L}_{B_e}(Y_h)$ 
 is a $*$-homomorphism, we get a $*$-homomorphism $\widetilde\sigma_h: M_n(B_e) \to M_n\big(\mathcal{L}_{B_e}(Y_h) \big) = \mathcal{L}_{B_e}((Y_h)^n)$ by setting
 \[ \widetilde\sigma_h\big([b_{i,j}]\big) = \big[\sigma_h(b_{i,j})\big]\]
 for all $[b_{i,j}] \in M_n(B_e)$. Now, with $R= \big[\langle   x_i,  x_j \rangle_\B\big]$ and 
$S= \big[\langle  \rho(b) x_i,  \rho(b) x_j \rangle_\B\big]$, Lemma \ref{Lance-like} gives that 
\[  S \leq \|b\|^2 \,R \quad (\text{in $M_n(B_e)^+$})\]
 and thus 
 \[ \widetilde\sigma_h(S) \leq \|b\|^2 \, \widetilde\sigma_h(R).\]
 We therefore obtain
 \begin{align*}
  \langle z', z'\rangle_\B &= \sum_{i,j =1}^n \big\langle  (\rho(b) x_i) \dot\otimes y_i,  (\rho(b) x_j)\dot\otimes y_j \big\rangle_\B = \sum_{i,j =1}^n \big\langle y_i,  \sigma\big( \langle  \rho(b) x_i,  \rho(b) x_j \rangle_\B\big) y_j \big\rangle_\B\\
  &= \big\langle \mathbf{y},  \big[ \sigma_h(\langle  \rho(b) x_i,  \rho(b) x_j \rangle_\B)\big] \mathbf{y}\big\rangle_{B_e}= \big\langle \mathbf{y},  \widetilde\sigma_h(S) \mathbf{y}\big\rangle_{B_e}\\
  &\leq \|b\|^2 \, \big\langle \mathbf{y},  \widetilde\sigma_h(R) \mathbf{y}\big\rangle_{B_e}
  = \|b\|^2 \, \sum_{i,j =1}^n \big\langle   x_i \dot\otimes y_i,   x_j\dot\otimes y_j \big\rangle_\B \\ 
  &= \|b\|^2 \, \langle z, z\rangle_\B.
 \end{align*}
 
 \vspace{-4ex}\end{proof}
Proceeding further, for each $g\in G$ and $b\in B_g$, we get a linear map $(\rho\otimes \sigma)(b) : Z^0_r\to Z^0_{gr}$ by setting
\[ \big((\rho\otimes \sigma)(b)\xi\big)(h) := (\rho(b)\otimes \iota)(\xi(h)) \quad \text{for all } \xi \in Z^0_r \text{ and } h \in G.\]
 \begin{lemma} \label{ip-ineq}
 Let $\xi \in Z^0_r$, $g\in G$ and $b\in B_g$. Then we have \[0\leq \big\langle (\rho\otimes \sigma)(b)\xi, (\rho\otimes \sigma)(b)\xi \big\rangle_\B \, \leq\, 
 \|b\|^2\, \langle \xi, \xi\rangle_\B.\]
 \end{lemma}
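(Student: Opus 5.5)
We reduce both inequalities to a positivity statement for a single $B_e$-valued matrix, in the spirit of the proof of the preceding lemma and of \cite[Lemma 5.9]{AF}. Write $\xi\in Z^0_r$ as a finite sum $\xi=\sum_{h\in F}\xi(h)$ with $F$ finite. Each $\xi(h)\in X_{rh^{-1}}\otimes_{\sigma_h}Y_h$ is approximated by finite sums of elementary tensors; since both sides of the claimed inequalities depend continuously on $\xi(h)$ (by the bounds $\|\langle z,z'\rangle_\B\|\le\|z\|\|z'\|$ and $\|(\rho(b)\otimes\iota)z\|\le\|b\|\|z\|$ from the preceding lemma), we may assume $\xi=\sum_{i=1}^n \xi_{x_i,y_i}$. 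Here $y_i\in Y_{h_i}$ and $x_i\in X_{rh_i^{-1}}$, where the $h_i$ need not be distinct. Then $(\rho\otimes\sigma)(b)\xi=\sum_i\xi_{\rho(b)x_i,y_i}$, and
\[\langle\xi,\xi\rangle_\B=\sum_{i,j}\big\langle y_i,\sigma(\langle x_i,x_j\rangle_\B)y_j\big\rangle_\B,\qquad
\langle(\rho\otimes\sigma)(b)\xi,(\rho\otimes\sigma)(b)\xi\rangle_\B=\sum_{i,j}\big\langle y_i,\sigma(\langle \rho(b)x_i,\rho(b)x_j\rangle_\B)y_j\big\rangle_\B .\]

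Set $k_i:=rh_i^{-1}$ and $\mathbf{k}=(k_1,\dots,k_n)$. By Lemma \ref{Lance-like}, applied with $G=H$, $\varphi={\rm id}$ and the tuple $\mathbf{k}$, the matrices $R=[\langle x_i,x_j\rangle_\B]$ and $S=[\langle\rho(b)x_i,\rho(b)x_j\rangle_\B]$ lie in $M_{\mathbf{k}}(\B)^+$ and satisfy $S\le\|b\|^2R$. Note that $\langle x_i,x_j\rangle_\B\in B_{k_i^{-1}k_j}=B_{h_ih_j^{-1}}$. It therefore suffices to prove the following claim: for every $P=[p_{ij}]\in M_{\mathbf{k}}(\B)^+$ we have
\[\sum_{i,j}\langle y_i,\sigma(p_{ij})y_j\rangle_\B\ge0 .\]
Applying the claim to $S$ gives the first inequality, and applying it to $\|b\|^2R-S$ gives the second.

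To prove the claim, I would write $P=Q^*Q$ with $Q\in M_{\mathbf{k}}(\B)$, which is possible because $M_{\mathbf{k}}(\B)$ is a $C^*$-algebra. Then $p_{ij}=\sum_l q_{li}^*q_{lj}$ with $q_{lj}\in B_{k_l^{-1}k_j}$. Using properties (ii) and (iii) of a $\B$-action,
\[\sum_{i,j}\langle y_i,\sigma(q_{li}^*q_{lj})y_j\rangle_\B=\Big\langle \sum_i\sigma(q_{li})y_i,\sum_j\sigma(q_{lj})y_j\Big\rangle_\B .\]
Here $\sigma(q_{lj})y_j\in Y_{k_l^{-1}k_jh_j}=Y_{k_l^{-1}r}$, independently of $j$. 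So each vector $w_l:=\sum_j\sigma(q_{lj})y_j$ lies in a single fibre, and the $l$-th summand is $\langle w_l,w_l\rangle_\B\ge0$ in $B_e$. Summing over $l$ proves the claim.

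The main obstacle is this fibre bookkeeping, namely checking that the indices match so that each $w_l$ lives in one fibre. A second point to handle carefully is the reduction to elementary tensors when the $h_i$ coincide; this causes no problem, because the formula for the inner product is bilinear and the factorization argument does not use that the $h_i$ are distinct. The positivity $\langle\xi,\xi\rangle_\B\ge0$ asserted earlier in the paper follows by the same argument with $P=R$.
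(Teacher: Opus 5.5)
Your proof is correct and is essentially the paper's argument. Both apply Lemma \ref{Lance-like} to the Gram matrices $[\langle x_i,x_j\rangle_\B]$ and $[\langle \rho(b)x_i,\rho(b)x_j\rangle_\B]$ and then compress with the vector of $y$'s. The only difference is that the paper pushes $S\leq \|b\|^2R$ through the $*$-homomorphism $L^\sigma$ of Remark \ref{AF-action}, whereas you check positivity of the compression directly via $P=Q^*Q$; your $w_l\in Y_{k_l^{-1}r}=Y_{h_l}$ are exactly the components of $L^\sigma(Q)\mathbf{Y}$. Your continuity reduction to finite sums of elementary tensors is a small gain in rigour: the paper simply writes each $\xi(h_i)$ as such a finite sum, which only holds on a dense subspace of the completed tensor product.
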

 \begin{proof} 
 Let $h_1, \ldots, h_m\in G$ be an enumeration of the support of $\xi$.  
  For each $i \in \{1, \ldots, m\}$, we can write
 $\xi(h_i) = \sum_{j=1}^{n_i} x^i_j\dot\otimes y^i_j  \in X_{rh_i^{-1}} \otimes_{\sigma_{h_i}} Y_{h_i}$ for some $n_i\in \Naturali$. 
 Setting $N= \max\{n_i: i\in \{1, \ldots, m\}\}$ and adding zeroes whenever needed, we may suppose that \[\xi(h_i) = \sum_{j=1}^{N} x^i_j\dot\otimes y^i_j  \in X_{rh_i^{-1}} \otimes_{\sigma_{h_i}} Y_{h_i} \,  \text{ for every $i\in \{1, \ldots, m\}$}.\] 
Then 
 \begin{align*} \big\langle (\rho\otimes \sigma)(b)\xi, (\rho\otimes \sigma)(b)\xi \big\rangle_\B  
 &= \sum_{i,k=1}^m \big\langle ((\rho\otimes \sigma)(b)\xi)(h_i), ((\rho\otimes \sigma)(b)\xi)(h_k) \big\rangle_\B \\
  &= \sum_{i, k=1}^m \sum_{j, l=1}^{N}
 \big\langle (\rho\otimes \sigma)(b)(x^i_j\dot\otimes y^i_j ), (\rho\otimes \sigma)(b)(x^k_l\dot\otimes y^k_l)  \big\rangle_\B\\
  &= \sum_{i, k=1}^m \sum_{j, l=1}^{N}
 \big\langle (\rho(b)x^i_j)\dot\otimes y^i_j , (\rho(b)x^k_l)\dot\otimes y^k_l  \big\rangle_\B\\
  &= \sum_{i, k=1}^m \sum_{j, l=1}^{N}
 \big\langle y^i_j, \sigma\big( \langle \rho(b)x^i_j, \rho(b)x^k_l\rangle_\B\big) y_k^l\big\rangle_{B_e}
 \end{align*}
 Note that for every $i,j,k,l$, we have \[\langle x^i_j, x^k_l\rangle_\B \in B_{(rh_i^{-1})^{-1}rh_k^{-1}} = B_{h_ih_k^{-1}}, \text{ and, similarly, }\, 
 \langle \rho(b)x^i_j, \rho(b)x^k_l\rangle_\B \in B_{h_ih_k^{-1}}.\]   
 Set $\mathcal{I}:=\{1, \ldots, m\} \times \{1, \ldots, N\}$. Then  for each $(i,j)\in \mathcal{I}$, set $h_{(i, j)}:= h_i^{-1}$.  Moreover, set $\mathbf{H}:=[h_{(i, j)}]_{(i,j)\in  \mathcal{I}}$. 
 
 \smallskip We can now consider $\big[\langle x^i_j, x^k_l\rangle_\B\big]_{(i,j), (k,l) \in  \mathcal{I}}$ and $\big[\langle \rho(b)x^i_j, \rho(b)x^k_l\rangle_\B\big]_{(i,j), (k,l)\in  \mathcal{I}}$ as elements in the set
 \[M_\mathbf{H}(\B) := \Big\{ \big[b_{(i, j), (k,l)}\big]_{(i,j), (k,l)\in  \mathcal{I}} :  b_{(i, j), (k,l)} \in B_{{h_{(i,j)}^{-1}}h_{(k,l)}} = B_{h_i h_k^{-1}}\Big\},\]
 which can be organized as a $C^*$-algebra in an analogous way to the Abadie-Ferraro $C^*$-algebras, cf.~Section \ref{prelim}.\footnote{By linearly ordering the elements in $\mathcal{I}$,  say lexicographically, we can write $\mathbf{H}$ as an element $\mathbf{h}$ in $H^{mN}$ and the elements $\big[b_{(i, j), (k,l)}\big]_{(i,j), (k,l)\in  \mathcal{I}}$ as matrices in $M_{\mathbf{h}}(\B)$ correspondingly. Thus we can always reduce to considering a full-fledged Abadie-Ferraro $C^*$-algebra.}
 Now, let $\mathbf{Y}_{\mathbf{H}}$ be the Hilbert $B_e$-module given by 
 \[\mathbf{Y}_{\mathbf{H}} = \bigoplus_{(i, j) \in \mathcal{I}} \, Y_{{h_{(i,j)}^{-1}}},\]
 and note that $\mathbf{Y}:= \big(y^i_j\big)_{(i,j)\in \mathcal{I}} \in \mathbf{Y}_{\mathbf{H}}$.
 
 Now, by Lemma \ref{Lance-like} (with $\varphi={\rm id}_G$), we have that 
 \[\big[\langle \rho(b)x^i_j, \rho(b)x^k_l\rangle_\B\big]_{(i,j), (k,l) \in  \mathcal{I}} \, \leq \, \|b\|^2 \,\big[\langle x^i_j, x^k_l\rangle_\B\big]_{(i,j), (k,l) \in  \mathcal{I}}\] in $M_\mathbf{H}(\B) ^+$.
 Moreover, this order relation survives under the $*$-homomorphism $L^\sigma$ from $M_\mathbf{H}(\B)$ into  $\mathcal{L}_{B_e}(\mathbf{Y}_{\mathbf{H}})$
 defined as in Remark \ref{AF-action}. Skipping the external indices of the matrices involved, this implies that
\[ \big\langle \mathbf{Y}, L^\sigma\big(\big[\langle \rho(b)x^i_j, \rho(b)x^k_l\rangle_\B\big]\big) \mathbf{Y}  \big\rangle_{B_e} \leq 
 \|b^2\| \,  \big\langle \mathbf{Y}, L^\sigma\big(\big[\langle x^i_j, x^k_l\rangle_\B\big]\big) \mathbf{Y}  \big\rangle_{B_e}.\]
 Hence we get
 \begin{align*}
 \big\langle (\rho\otimes \sigma)(b)\xi, (\rho\otimes \sigma)(b)\xi \big\rangle_\B  
 & = \sum_{i, k=1}^m \sum_{j, l=1}^{N} \big\langle y^i_j, \sigma\big( \langle \rho(b)x^i_j, \rho(b)x^k_l\rangle_\B\big) y_k^l\big\rangle_{B_e} \\
 & =  \big\langle \mathbf{Y}, L^\sigma\big(\big[\langle \rho(b)x^i_j, \rho(b)x^k_l\rangle_\B\big]\big) \mathbf{Y}  \big\rangle_{B_e} \\
&  \leq \|b\|^2\,   \big\langle \mathbf{Y}, L^\sigma\big(\big[\langle x^i_j, x^k_l\rangle_\B\big]\big) \mathbf{Y}  \big\rangle_{B_e} \\
& = \|b\|^2\,\sum_{i, k=1}^m \sum_{j, l=1}^{N} \big\langle y^i_j, \sigma\big( \langle x^i_j, x^k_l\rangle_\B\big) y_k^l\big\rangle_{B_e} \\
& = \|b\|^2 \big\langle \xi, \xi \big\rangle_\B  \ . 
 \end{align*}
 Note that the computation above also gives that \[ 
  \big\langle (\rho\otimes \sigma)(b)\xi, (\rho\otimes \sigma)(b)\xi \big\rangle_\B =   
    \big\langle \mathbf{Y}, L^\sigma\big(\big[\langle x^i_j, x^k_l\rangle_\B\big]\big) \mathbf{Y}  \big\rangle_{B_e} \geq 0,\] because $\big[\langle x^i_j, x^k_l\rangle_\B\big] \geq 0$ in  $M_\mathbf{H}(\B)$, cf.~\cite[Lemma 3.8]{BeCo8}, so $L^\sigma\big(\big[\langle x^i_j, x^k_l\rangle_\B\big]\big) \geq 0$ in $\mathcal{L}_{B_e}(Y_{\mathbf{H}})$.
   \end{proof}
  In turn, using Lemma \ref{ip-ineq}, we get that for $g\in G$ and $b \in B_g$, the linear map $(\rho\otimes \sigma)(b) : Z^0_r\to Z^0_{gr}$ induces a  map 
  $(\rho\dot\otimes \sigma)(b) : Z_r\to Z_{gr}$.
  At the end,  the resulting map $\rho\dot\otimes \sigma : \B\to \mathcal{F}(\mathcal{Z})$ is easily seen to be a $\B$-action on $\mathcal{Z}=\X\otimes_\sigma \Y$. For example, condition (iii) of Definition \ref{rHilbertBbundle} follows  from the computation in equation (\ref{rho-adj}). 
  We have thus obtained the following result.
  \begin{theorem} \label{tensor-action}
 Let  $\rho$ and $\sigma$ be $\B$-actions on some Hilbert $\B$-bundles $\X = (X_g)_{g \in G}$ and $\Y = (Y_g)_{g \in G}$, respectively.  Then there is  
 a $\B$-action $\rho \dot\otimes \sigma$ on the Hilbert $\B$-bundle 
 $\X\otimes_\sigma \Y$ satisfying that 
 \[
(\rho \dot\otimes \sigma)(b) (x \dot\otimes y) = (\rho(b) x) \dot\otimes y
 \]
 for all $g, h, r \in G$, $b \in B_g$, $x \in X_{r h^{-1}}$ and $y \in Y_h$.
  \end{theorem}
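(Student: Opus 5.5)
The plan is to assemble the pieces already constructed: the operators $(\rho\otimes\sigma)(b): Z^0_r\to Z^0_{gr}$ on the semi-inner product $\B$-bundle $\mathcal{Z}^0$, and the bound of Lemma \ref{ip-ineq}. First I check that $(\rho\otimes\sigma)$ satisfies the algebraic conditions (i)--(iv) of Definition \ref{BC-action-def} at the level of $\mathcal{Z}^0$, together with the norm inequality required in Remark \ref{pre-act-def}. Then Remark \ref{pre-act-def} yields an induced $\B$-action $\rho\dot\otimes\sigma$ on the Hausdorff completion $\mathcal{Z}=\X\otimes_\sigma\Y$. The displayed formula $(\rho\dot\otimes\sigma)(b)(x\dot\otimes y)=(\rho(b)x)\dot\otimes y$ then holds by construction, via the identification $x\dot\otimes y\leftrightarrow \xi_{x,y}$. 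Indeed, $(\rho\otimes\sigma)(b)\xi_{x,y}=\xi_{\rho(b)x,y}$, since $\rho(b)x\in X_{grh^{-1}}$ and the support point $h$ is unchanged.

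For the verification on $\mathcal{Z}^0$, all identities are linear in $\xi$. It therefore suffices to test them on elementary tensors $\xi_{x,y}$ with $x\in X_{rh^{-1}}$, $y\in Y_h$; well-definedness on $X_{rh^{-1}}\otimes_{\sigma_h}Y_h$ is already given by the lemma preceding Lemma \ref{ip-ineq}.
\begin{itemize}
\item[(i)] Condition (i) is the fibre bookkeeping $Z^0_r\to Z^0_{gr}$ together with bilinearity. Bilinearity is clear from that of $(b,x)\mapsto\rho(b)x$.
\item[(ii)] For condition (ii), $\rho(bb')x\dot\otimes y=\rho(b)(\rho(b')x)\dot\otimes y$ by multiplicativity of $\rho$.
\item[(iii)] Condition (iii) is exactly the computation (\ref{rho-adj}), extended sesquilinearly to finitely supported $\xi,\xi'$ by summing over $h,h'$.
\item[(iv)] For condition (iv), with $b'\in B_{g'}$ one computes $\big((\rho\otimes\sigma)(b)\xi_{x,y}\big)\cdot b'=\rho(b)x\dot\otimes yb'=(\rho\otimes\sigma)(b)(\xi_{x,y}\cdot b')$, using $(\xi\cdot b')(h)=\xi(hg'^{-1})\cdot b'$ and the fact that both operations shift indices compatibly.
\end{itemize}
The inequality $\langle(\rho\otimes\sigma)(b)\xi,(\rho\otimes\sigma)(b)\xi\rangle_\B\le\|b\|^2\langle\xi,\xi\rangle_\B$ is Lemma \ref{ip-ineq}.

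The passage to the completion deserves a word, since Remark \ref{pre-act-def} is stated as "not difficult". The inequality shows that $(\rho\otimes\sigma)(b)$ maps null vectors to null vectors and is bounded by $\|b\|$ for the seminorms, so it descends to the quotient and extends continuously to $Z_r\to Z_{gr}$. Conditions (i)--(iv) pass to the completion by continuity: the inner product is jointly continuous by the Cauchy--Schwarz estimate $\|\langle z,z'\rangle_\B\|\le\|z\|\|z'\|$, and the right action satisfies $\|zb'\|\le\|z\|\|b'\|$. Bilinearity in $b$ also persists, since the extension of a sum agrees with the sum of extensions on a dense subspace.

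I expect the only mildly delicate point to be the index bookkeeping in (iv) and in (iii), namely checking that the shifted supports match. Once the identities are reduced to elementary tensors $\xi_{x,y}$, however, these become one-line consequences of the corresponding properties of $\rho$ and of the inner product formula for $\X\otimes_\sigma\Y$.
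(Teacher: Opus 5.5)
Your proposal is correct and follows the paper's own route. The paper also defines $(\rho\otimes\sigma)(b)$ on $\mathcal{Z}^0$, uses Lemma \ref{ip-ineq} to descend to the Hausdorff completion, and asserts that the action axioms are easily checked (citing (\ref{rho-adj}) for (iii)), so your write-up is a more detailed version of the same argument.

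The one point to tighten is the reduction to elementary tensors. If each $X_{rh^{-1}}\otimes_{\sigma_h}Y_h$ is the completed module, this step needs boundedness as well as linearity. The required bounds are all available: the lemma preceding Lemma \ref{ip-ineq}, $\|z\cdot b\|\le\|z\|\,\|b\|$, and Cauchy--Schwarz.
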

 \begin{example}\label{G-trivial}
Let us consider the case where $G$ is trivial. Then the Fell bundle $\B$ can be identified with a $C^*$-algebra $B$, the Hilbert $\B$-bundle $\X$ (resp.~$\Y$) with a Hilbert $B$-module $X$ (resp.~$Y$), and the action $\rho$ (resp.\ $\sigma$)  of $\B$ on $\X$ (resp.\ $\Y$) with a $*$-representation of $B$ in $\L_B(X)$ (resp.~$\L_B(Y)$). It is now easy to see that $\X\otimes_\sigma\Y$ is the Hilbert $B$-module obtained by forming the internal tensor product $X\otimes_\sigma Y$, and  $\rho\dot\otimes \sigma$ is the $*$-representation of $B$ into  $\L_B(X\otimes_\sigma Y)$ satisfying  
\[(\rho\dot\otimes \sigma)(b) (x \dot\otimes y) = (\rho(b) x) \dot \otimes y\]
for all $b\in B,  x \in X$ and $y\in Y$, i.e., $(\rho\dot\otimes \sigma)(b) =  (\rho(b)\otimes \iota)$
(or $(\rho\dot\otimes \sigma)(b) = \sigma_{*}(\rho(b))$) in the notation used by Lance (cf.~\cite[p.~42]{La1}).
\end{example}

\begin{remark} \label{boxaction}
 For every $r \in G$, 
 we can also consider
  \[W^0_r := \Big\{ \xi \in \prod_{h \in H} (X_h \otimes_{\sigma_{h^{-1}r} } Y_{h^{-1}r}) \mid \xi \text{ is finitely supported}\Big\}\]
  and $\mathcal{W}^0:=\coprod_{r\in G} W^0_r$ (disjoint union).
 We can then define a linear map  $\xi \mapsto \xi \cdot b$ from $W^0_r$ into $W^0_{rg}$ by setting
\[ (\xi \cdot b)(h) := \xi(h)\cdot b \quad \text{for all } \xi \in W^0_r, b \in B_g \text{ and } h\in G.\]

If now  $r, r' , h, h'\in G$, then there is a unique well-defined bilinear map 
 $(z, z')\mapsto \langle z, z' \rangle_\B$ from $ (X_h \otimes_{\sigma_{h^{-1}r}} Y_{h^{-1}r}) \times 
(X_{h'} \otimes_{\sigma_{h'^{-1}r'}} Y_{h'^{-1}r'}) $ into  $B_{r^{-1}r'}$ such that  \[
 \big\langle x \dot\otimes y, x' \dot\otimes y' \big\rangle_\B  = \big\langle y, \sigma(\langle x,x' \rangle_\B) y' \big\rangle_\B .
 \]
 This induces a bilinear map $(\xi, \xi')\mapsto 
  \langle \xi, \xi' \big\rangle_\B$ from $W_r^0 \times W_{r'}^0$ into $B_{r^{-1}r'}$ given by
  \[  \langle \xi, \xi' \big\rangle_\B = \sum_{h, h'\in G}  \big\langle \xi(h), \xi'(h') \big\rangle_\B.\]

Moreover, for each $g\in G$ and $b\in B_g$, we get a linear map $(\rho\boxtimes \sigma)(b) : W^0_r\to W^0_{gr}$ by setting
\[ \big((\rho\boxtimes \sigma)(b)\xi\big)(h) := 
(\rho(b)\otimes \iota)\xi(g^{-1}h)
\quad \text{for all } \xi \in W^0_r \text{ and } h \in G,\]
where $(\rho(b)\otimes \iota)(x\dot\otimes y) := (\rho(b)x)\dot\otimes y$ whenever
$k, l\in G$ are such that $kl=r$, $x\in X_k$ and $y\in Y_{l}$. Note here that this definition of $(\rho(b)\otimes \iota)(x\dot\otimes y) $ is coherent with the one introduced previously, and that $(\rho(b)\otimes \iota)$ can be extended to a linear map from $X_k\otimes_{\sigma_l}Y_l$ to  $X_{gk}\otimes_{\sigma_l}Y_l$.

Proceeding as we did before Theorem \ref{tensor-action}, we obtain after separation and completion  a (right) Hilbert $\B$-bundle $\mathcal{W}$ and a $\B$-action $\rho\dot\boxtimes \sigma$ on $\mathcal{W}$. We claim that this action is unitarily equivalent to $\rho\dot\otimes \sigma$ (in the sense of \cite[Definition 4.18]{BeCo8}).

Set $\U^0 :=(U^0)_{r\in G}$, where for each $r\in G$, $U^0_r: Z^0_r\to W^0_r$ is defined by 
\[ (U^0_r\xi) (h) = \xi(h^{-1}r) \quad \text{for all } h \in G \text{ and } \xi \in Z^0_r \, .\]
Note that $U^0_r$ takes its values in $W^0_r$ since $\xi(h^{-1}r) \in X_{r(h^{-1}r)^{-1}} \otimes_{\sigma_{h^{-1}r}} Y_{h^{-1}r} = X_h \otimes_{\sigma_{h^{-1}r}} Y_{h^{-1}r}$.
It is now routine to check that $U^0_r$ is a bijective linear map respecting the $\B$-semi-inner products and intertwining the right actions of $\B$, the inverse map $V^0_r: W^0_r\to Z^0_r$ being given by 
\[ (V^0_r\xi) (h) = \xi(rh^{-1}) \quad \text{for all } h \in G \text{ and } \xi \in W^0_r \, .\] 
For all $g, h \in G$, $\xi \in Z^0_r$ and $b \in B_g$, we have
\begin{align*}
((U^0_r\xi) \cdot b)(h) & = (U^0_r\xi)(h) \cdot b = \xi(h^{-1}r) \cdot b \\
& = (\xi \cdot b)(h^{-1}r g) = U^0_{rg}(\xi \cdot b)(h),
\end{align*}
Thus, $(U^0_r\xi) \cdot b = U^0_{rg}(\xi \cdot b)$. Moreover, we have
\begin{align*} 
\big(\big((\rho\boxtimes \sigma)(b)U^0_r\big)\xi\big)(h) &=  \big((\rho\boxtimes \sigma)(b)(U^0_r\xi)\big)(h)\\
&=(\rho(b)\otimes \iota)\big((U^0_r\xi)(g^{-1}h)\big)\\
&=(\rho(b)\otimes \iota)\xi(h^{-1}gr) \\
&=\big((\rho\otimes \sigma)(b)\xi\big)(h^{-1}gr) \\
&=\big(U^0_{gr}\big(\rho\otimes \sigma)(b)\xi\big)\big)(h) 
\end{align*} 
hence, $(\rho\boxtimes \sigma)(b)U^0_r= U^0_{gr}(\rho\otimes \sigma)(b)$. It is now straightforward to check that $\mathcal{U}^0=(U^0_r)_{r\in G}$ induces a unitary $\mathcal{U}=(U_r)_{r\in G}$ from $\mathcal{Z}$ to $\mathcal{W}$ intertwining $\rho\dot\otimes \sigma$ and $\rho\dot\boxtimes \sigma$. 
\end{remark}
 
Let us now return to our discussion at the beginning of this subsection. 

\begin{proposition} \label{comp-pos-def}
Let $\B=(B_g)_{g\in G}$ a Fell bundle over $G$ and consider two $\B$-actions $\rho$ and $\sigma$ on  Hilbert $\B$-bundles $\X=(X_g)_{g\in G}$ and $\Y=(Y_g)_{g\in G}$, respectively.  
Let $h \in G$, $x \in X_{h^{-1}}$ and $y\in Y_{h}$ and consider the $\B$-bundle map $R=(R_g)_{g\in G}$ defined by 
\[ R_g(b)= \langle y, \sigma(\langle x,\rho(b) x\rangle_\B) y \rangle_\B \quad \text{ for all } g\in G \text{ and } b \in B_g.\]
Then $R$ is positive definite.
\end{proposition}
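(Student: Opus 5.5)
The plan is to realize $R$ as a diagonal coefficient of the tensor product action $\rho\dot\otimes\sigma$ from Theorem \ref{tensor-action}. Then the result recalled at the end of Section \ref{prelim} (cf.~\cite[Proposition 5.3]{BeCo8}) applies: for any $\B$-action on a Hilbert $\B$-bundle and any vector in the unit fibre, $b\mapsto\langle z,(\rho\dot\otimes\sigma)(b)z\rangle_\B$ is a positive definite $\B$-bundle map.

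Since $x\in X_{h^{-1}}$ and $y\in Y_h$, we have $h^{-1}h=e$. With the convention fixed before the summarizing proposition ($k=h^{-1}$, $l=h$, $r=e$), this gives the elementary tensor
\[ z:=x\dot\otimes y \in X_{h^{-1}}\otimes_{\sigma_h}Y_h \subseteq Z^0_e, \]
and its image lies in the unit fibre $Z_e$ of $\X\otimes_\sigma\Y$.

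Next I compute the coefficient. For $g\in G$ and $b\in B_g$, Theorem \ref{tensor-action} (with $r=e$) gives $(\rho\dot\otimes\sigma)(b)z=(\rho(b)x)\dot\otimes y$, which lies in $X_{gh^{-1}}\otimes_{\sigma_h}Y_h\subseteq Z_g$. The inner product formula from the summarizing proposition then yields
\[ \big\langle z,(\rho\dot\otimes\sigma)(b)z\big\rangle_\B=\big\langle x\dot\otimes y,(\rho(b)x)\dot\otimes y\big\rangle_\B=\big\langle y,\sigma(\langle x,\rho(b)x\rangle_\B)y\big\rangle_\B=R_g(b). \]
Hence $R_g(b)=\langle z,(\rho\dot\otimes\sigma)(b)z\rangle_\B$ for all $g$ and $b$, and positive definiteness of $R$ follows from \cite[Proposition 5.3]{BeCo8} applied with $\varphi={\rm id}_G$.

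The main point needing care is the bookkeeping of indices. First, $z$ must genuinely sit in the unit fibre, which requires the identification $\xi_{x,y}\in Z^0_e$ supported at $h$. Second, the inner product of two elementary tensors in possibly different summands $X_{rh^{-1}}\otimes_{\sigma_h}Y_h$ and $X_{r'h'^{-1}}\otimes_{\sigma_{h'}}Y_{h'}$ must be given by the stated formula. Here it is the case $r=e$, $r'=g$, $h'=h$, which is covered by the construction. One should also check that passing to the separated completion does not affect this computation, which is immediate since the formula holds already at the level of $\mathcal Z^0$.
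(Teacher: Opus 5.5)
Your proposal is correct and follows the paper's proof. The paper likewise identifies $x\dot\otimes y$ with an element of the unit fibre of $\X\otimes_\sigma\Y$, writes $R_g(b)=\langle x\dot\otimes y,[(\rho\dot\otimes\sigma)(b)](x\dot\otimes y)\rangle_\B$, and concludes that diagonal coefficients over the unit fibre are positive definite, citing \cite[Proposition 4.8]{BeCo8} where you cite the Proposition 5.3 recalled in the preliminaries.
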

\begin{proof} For every $g\in G$ and $b \in \B_g$, we get that
\[ R_g(b)= \langle y, \sigma(\langle x,\rho(b) x\rangle_\B) y \rangle_\B = \langle x\dot\otimes y, (\rho(b) x) \dot\otimes y \rangle_\B =\big\langle x\dot\otimes y, [(\rho\dot\otimes \sigma)(b)] (x\dot\otimes y)\big\rangle_\B .\]
Thinking of $x \dot\otimes y$ as an element in the unit fiber of $\X\otimes_\sigma \Y$, we see that $R$ is a diagonal coefficient of $\rho\dot\otimes \sigma$ over this fibre. Thus,  \cite[Proposition 4.8]{BeCo8} gives that $R$ is positive definite.
\end{proof}

\subsection{Tensor product of actions associated to $C^*$-dynamical systems} \label{tp-equiv}

To illustrate the concept of tensor product of actions developed above, we will consider the Fell bundle $\B_\Sigma= (A\times\{g\})_{g\in G}$ associated to a discrete unital $C^*$-dynamical system $\Sigma=(A, G, \alpha)$.  
We recall that the operations and the norms are given by \[(a, g) (a', g') = (a\alpha_g(a'), gg'),  (a, g)^* = (\alpha_{g^{-1}}(a^*), g^{-1}) \text{ and } \|(a, g)\| = \|a\|. \]  As usual, we identify the unit fiber $A\times \{e\}$ with $A$ via the map $(a,e)\mapsto a$.  

To  each a Hilbert $A$-module $X$  one can then associate a Hilbert $\B_\Sigma$-bundle $\X=(X_g)_{g\in G}$ as  follows (cf.~\cite{BeCo8}).
 For each $g\in G$, set $X_g:= X\times \{g\}$  and organize $X_g$ as a Banach space in the obvious way. 
 Moreover, for $x,x' \in X, a\in A$ and $g, h, h' \in G$, set
\begin{align*}
(x, h) \cdot (a, g) &:= \big(x\,\alpha_h(a), hg\big), \\
\big\langle (x, h), \, (x', h')\big\rangle_{\B_\Sigma} &:= \big(\alpha_h^{-1}(\langle x, x'\rangle_A), h^{-1}h'\big).
\end{align*}

\noindent Assume now $(\varrho, v)$ (resp.~$(\varsigma, w)$)  is an equivariant representation of  $\Sigma$ on a Hilbert $A$-module $X$ (resp.~$Y$), cf.~\cite{BeCo3, BeCo4, BeCo6}. Let then  $\X=(X_g)_{g\in G}$ (resp.~$\Y=(Y_g)_{g\in G}$) denote the  Hilbert $\B_\Sigma$-bundle associated to $X$ (resp.~$Y$), and $\rho_{(\varrho,v)}$ (resp.~$\rho_{(\varsigma, w)}$)
be the associated $\B_\Sigma$-action on $\X$ (resp.~$\mathcal{Y}$), cf.~\cite{BeCo8}, which is defined 
 by \[\rho_{(\varrho,v)}((a, g)) (x, h) = \big(\varrho(a)v(g) x, gh\big)\] for every $(a, g) \in \B_\Sigma$ and $(x, h) \in \X$, (resp. \[\rho_{(\varsigma,w)}((a, g)) (y, h) = \big(\varsigma(a)w(g) y, gh\big)\] for every $(a, g) \in \B_\Sigma$ and $(y, h) \in \Y$).
 
 For brevity, we set $\rho:=\rho_{(\varrho,v)}$ (resp.~$\sigma:=\rho_{(\varsigma, w)}$).
A natural task is to compare the tensor action $\rho\dot\otimes\sigma$  with the $\B_\Sigma$-action $\tau:= \rho_{(\varrho\otimes\varsigma, v\otimes w)}$ associated to the tensor product equivariant representation $(\varrho\otimes\varsigma, v\otimes w)$ of $\Sigma$ on the Hilbert $A$-module $X \otimes_{\varsigma} Y$, as defined in \cite[subsection 2.2]{BeCo6}. 

 The tensor product action $\rho \dot\otimes \sigma$ acts on the Hilbert $\B_\Sigma$-bundle $\X\otimes_\sigma\mathcal{Y} = (Z_r)_{r\in G}$, where each $Z_r$ arises from
 \begin{align*}
 Z^0_r &= \Big\{ \xi \in \prod_{h \in G}  X_{r h^{-1}} \otimes_{\sigma_h} Y_h
  \mid \xi \text{ is finitely supported}\Big\} \\
  &=\Big\{ \xi \in \prod_{h \in G} 
  (X\times \{r h^{-1}\}) \otimes_{\sigma_h} (Y \times \{h\})
  \mid \xi \text{ is finitely supported}\Big\}
  \end{align*}
  Let $r\in G$. For $h, k \in G$ such that $hk=r$,  the right action of $\B_\Sigma$ on $Z_r$ is determined by
\begin{align*} 
\big((x, h) \dot\otimes (y,k)\big) \cdot (a,g) &= (x, h) \dot\otimes ((y,k) \cdot (a,g))\\ 
&= (x, h) \dot\otimes \,(y\alpha_k(a), kg) \in  Z_{rg} 
\end{align*}
 where $g\in G$, $a \in A$, $x \in X$ and $y \in Y$.

 The inner product on $\X\otimes_\sigma\mathcal{Y}$ is determined for $x, x'\in X$, $y, y'\in Y$, $r, r', h, h', k, k'\in G$ satisfying $hk=r$ and $h'k'=r'$ by 

\vspace{-3ex}  
\begin{align*} \big\langle (x, h) \dot\otimes (y,k), (x', h') &\dot\otimes (y',k') \big\rangle_{\B_\Sigma} 
  = \langle (y,k), \sigma(\langle (x, h), (x', h') \rangle_{\B_\Sigma}) (y',k') \rangle_{\B_\Sigma} \\
&  =  \langle (y,k), \sigma\big((\alpha^{-1}_{h} (\langle x, x' \rangle_A),h^{-1}h')\big) (y',k') \rangle_{\B_\Sigma} \\
&  = \langle (y,k), (\varsigma(\alpha_{h^{-1}} (\langle x, x' \rangle_A)) w(h^{-1}h') y', h^{-1}h'k') \rangle_{\B_\Sigma}  \\
& = \big( \alpha^{-1}_k \big(\langle y, \varsigma(\alpha_{h^{-1}} (\langle x, x' \rangle_A)) w(h^{-1}h') y' \rangle_A \big), (hk)^{-1}h'k' \big).
 \end{align*}
Further, the $\B_\Sigma$-action $\rho\dot\otimes\sigma$ on $\X\otimes_\sigma\mathcal{Y}$ is determined  by
\begin{align*}(\rho\dot\otimes\sigma)((a,g)) ((x, h)\dot\otimes (y,k)) &= (\rho((a,g))(x, h))\dot\otimes (y,k) =
 (\varrho(a)v(g)x, gh)\dot\otimes (y,k)
 \end{align*}
for $a \in A, x\in X, y\in Y,   g, h, k, r\in G$ with $hk=r$.

\medskip On the other hand, we can form the equivariant representation $(\varrho\otimes\varsigma, v\otimes w)$ of $\Sigma$ on $X\otimes_{\varsigma}Y$, which is determined  by
 \[ (\varrho\otimes\varsigma)(a) (x\dot\otimes y) = \varrho(a)x\dot\otimes y, \quad (v\otimes w)(g)  (x\dot\otimes y) =  v(g)x\dot\otimes w(g)y\,\]
 for $a\in A$, $g\in G$, $x\in X$ and $y\in Y$, cf.~\cite{BeCo6}.
 
 For $r\in G$, set $(X\otimes_{\varsigma}Y)_r = (X\otimes_{\varsigma}Y)\times \{r\}$. The operations on the  Hilbert $\B_\Sigma$-bundle $ \mathcal{W}=( (X\otimes_{\varsigma}Y)_r)_{r\in G}$ associated to $(\varrho\otimes\varsigma, v\otimes w)$ satisfy 
\begin{align*}
(x\dot\otimes y, r) \cdot (a, g) &= \big((x\dot\otimes y)\,\alpha_r(a), rg\big) =  \big(x\dot\otimes (y\alpha_r(a)), rg\big), \\
\big\langle (x\dot\otimes y, r), \, (x'\dot\otimes y', r')\big\rangle_{\B_\Sigma} &= \big(\alpha_r^{-1}(\langle x\dot\otimes y, x'\dot\otimes y'\rangle_A), r^{-1}r'\big) \\
& = \big(\alpha_r^{-1}(\langle y, \varsigma(\langle x, x'\rangle_A)y'\rangle_A), r^{-1}r'\big)
\end{align*}
and the $\B_\Sigma$-action $\tau$ associated to $(\varrho\otimes\varsigma, v\otimes w)$ on $\mathcal{W}=( (X\otimes_{\varsigma}Y)_r)_{r\in G}$ is determined by
\[ [\tau(a, g)] (x\dot\otimes y, r) = \big( (\varrho\otimes\varsigma)(a)(v\otimes w)(g) (x\dot\otimes y), gr\big) = \big( (\varrho(a)v(g)x)\dot\otimes (w(g) y), gr\big)\]
for $a \in A, g, r, r' \in G, x\in X$ and $ y\in Y$.

\begin{definition} \label{unitequiv}
Let $\B=(B_g)_{g\in G}$ be a Fell bundle over $G$ and $\X=(X_g)_{g \in G}, \X' = (X'_g)_{g \in G}$ be Hilbert $\B$-bundles. We will denote the $\B$-valued inner product on $\X'$ by $\langle \cdot, \cdot\rangle_\B'$. Let $V=(V_g)_{g\in G}$ be a $\B$-bundle map from $\X$ to $\X'$ (i.e., each $V_g$ is a bounded linear map from $X_g$ into $X'_g$).  

We will say that $V$ is a \emph{Hilbert  $\B$-bundle embedding} of $\X$ into $\X'$ 
when the following conditions hold:
 \begin{itemize}
 \item $V_g(x) b = V_{gh}(xb)$
for all $g,h \in G$, $x \in X_g$ and $b \in B_h$.
 \item  $\langle V_g(x), V_h(y)\rangle_\B' = \langle x, y\rangle_\B$
   for all $g, h \in G,  x\in X_g, y\in X_h$. 
   \end{itemize}
 Clearly, each $V_g$ is then isometric on $X_g$. Moreover, if every $V_g$ is surjective (hence bijective), then $V$ is said to be \emph{unitary}, cf.~\cite[Definition 3.9]{BeCo8}.

If $\rho$ (resp.~$\rho'$) is a $\B$-action on $\X$ (resp.~$\X'$), we will say that a $\B$-bundle map $V$ \emph{intertwines $\rho$ and $\rho'$} (equivalently, $V$ is an intertwiner between 
$\rho$ and $\rho'$) whenever it satisfies
  \begin{itemize}
\item $\rho'(b)V_g(x)  = V_{hg}(\rho(b)x)$
for all $g,h \in G$, $x \in X_g$ and $b \in B_h$.
\end{itemize}
We will say for short that $V$ is an \emph{isometric intertwiner} from $\rho$ to $\rho'$ if it is a Hilbert $\B$-bundle embedding  of $\X$ into $\X'$ intertwining $\rho$ and $\rho'$.
 \end{definition}
 
The two tensor actions $\rho\dot\otimes \sigma$ and $\tau$ of $\B_\Sigma$ can be related to each other in the following way. 

\begin{lemma}\label{U_g}
For each $g \in G$, there exists a unitary  $U(g) = (U(g)_r)_{r\in G}: \X \otimes_\sigma \Y \to \X \otimes_\sigma \Y$ satisfying 
\begin{equation}\label{Ugr} 
U(g)_r \big((x,h) \dot\otimes (y,k)\big) =  (x,g^{-1}h) \dot\otimes (w(h^{-1}gh) y,h^{-1}ghk)
\end{equation}
for all $x \in X, y \in Y, h, k, r\in G$ with $hk=r$.  
\end{lemma}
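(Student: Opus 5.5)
The plan is to define $U(g)$ first on the pre-completion $\mathcal{Z}^0=(Z^0_r)_{r\in G}$, prove that it preserves the $\B_\Sigma$-valued inner products and the right action of $\B_\Sigma$, and then pass to the separation and completion. Finally, $U(g^{-1})$ will serve as an inverse, which gives surjectivity.

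\textbf{Step 1: the target fibre is correct.} Fix $g$ and $r$, and let $h,k\in G$ with $hk=r$. Set $s:=h^{-1}gh$. The right-hand side of (\ref{Ugr}) lies in $X_{g^{-1}h}\otimes_{\sigma_{sk}}Y_{sk}$. Since $g^{-1}h\cdot h^{-1}ghk=hk=r$, this is again a summand of $Z^0_r$; the $k$-th slot is moved to the $(sk)$-th slot. So on each summand $X_{h}\otimes_{\sigma_k}Y_k$ of $Z^0_r$ we want a map given on elementary tensors by $(x,h)\dot\otimes(y,k)\mapsto(x,g^{-1}h)\dot\otimes(w(s)y,sk)$. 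We extend it linearly to finite sums of elementary tensors, and then to finitely supported $\xi$ summand-wise.

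\textbf{Step 2: inner products are preserved.} This is the main computation. Write $c=\langle x,x'\rangle_A$ and $s'=h'^{-1}gh'$. By the explicit inner-product formula displayed before Definition \ref{unitequiv}, the inner product of the images equals
\[\big(\alpha_{(sk)^{-1}}\big(\langle w(s)y,\ \varsigma(\alpha_{h^{-1}g}(c))\,w(h^{-1}h')\,w(s')y'\rangle_A\big),\ r^{-1}r'\big).\]
Here we used $(g^{-1}h)^{-1}g^{-1}h'=h^{-1}h'$. Three facts about the equivariant representation $(\varsigma,w)$ then finish the computation:
\begin{itemize}
\item $w(h^{-1}h')w(s')=w(h^{-1}gh')=w(s)w(h^{-1}h')$;
\item $\varsigma(\alpha_{h^{-1}g}(c))=\varsigma(\alpha_s(\alpha_{h^{-1}}(c)))=w(s)\varsigma(\alpha_{h^{-1}}(c))w(s)^*$;
\item $\langle w(s)y,w(s)y''\rangle_A=\alpha_s(\langle y,y''\rangle_A)$.
\end{itemize}
Together with $\alpha_{(sk)^{-1}}\alpha_s=\alpha_{k^{-1}}$, they reduce the expression to the inner product of $(x,h)\dot\otimes(y,k)$ and $(x',h')\dot\otimes(y',k')$. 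By sesquilinearity, the map therefore preserves $\langle\cdot,\cdot\rangle_{\B_\Sigma}$ on all of $Z^0_r$. In particular it sends elements of zero norm to elements of zero norm. This handles well-definedness with respect to the tensor relations in each $X_h\otimes_{\sigma_k}Y_k$, as well as the passage to the separation. It also shows that the induced map is isometric, so it extends to the completion $Z_r$. I expect this bookkeeping to be the main obstacle: keeping track of the group labels and the covariance identities.

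\textbf{Step 3: compatibility with the right action.} For $(a,g')\in\B_\Sigma$ we have
\[w(s)\big(y\,\alpha_k(a)\big)=(w(s)y)\,\alpha_{sk}(a),\]
and the second label becomes $skg'$ on both sides. Hence $U(g)_r(z)\cdot(a,g')=U(g)_{rg'}(z\cdot(a,g'))$.

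\textbf{Step 4: surjectivity.} Apply $U(g^{-1})$ to the image of $(x,h)\dot\otimes(y,k)$. The first label returns to $h$. The relevant conjugate is $(g^{-1}h)^{-1}g^{-1}(g^{-1}h)=s^{-1}$, so the second component becomes $(w(s^{-1})w(s)y,k)=(y,k)$. Hence $U(g^{-1})U(g)=\mathrm{id}$, and by symmetry $U(g)U(g^{-1})=\mathrm{id}$, on a total set. Each $U(g)_r$ is therefore bijective, and $U(g)$ is unitary in the sense of Definition \ref{unitequiv}.
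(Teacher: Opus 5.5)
Your proposal is correct and follows the paper's proof essentially step by step: you show inner products are preserved using the covariance identities for $(\varsigma,w)$, check compatibility with the right action via $w(s)(y\alpha_k(a))=(w(s)y)\alpha_{sk}(a)$, and obtain bijectivity because $U(g^{-1})_r$ is inverse to $U(g)_r$. One small notational fix: $w(s)$ is only $\alpha_s$-semilinear, not adjointable, so the covariance relation should read $\varsigma(\alpha_s(a))=w(s)\varsigma(a)w(s)^{-1}$ with $w(s)^{-1}=w(s^{-1})$ rather than $w(s)^*$.
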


\begin{proof}
Let $ x, x'\in X, y, y'\in Y, g, h,h', k, k',  r, r'\in G$ with $hk=r$ and $h'k'=r'$. Then we have
\begin{align*}
\big\langle &(x,g^{-1}h) \dot\otimes (w(h^{-1}gh) y,h^{-1}ghk), (x',g^{-1}h') \dot\otimes (w(h'^{-1}gh') y,h'^{-1}gh'k')\big\rangle_{\B_\Sigma}\\
&=\big( \alpha^{-1}_{h^{-1}ghk} \big(\langle w(h^{-1}gh) y, \varsigma(\alpha_{h^{-1}g} (\langle x, x' \rangle_A)) w((g^{-1}h)^{-1} g^{-1}h') w(h'^{-1}gh') y \rangle_A \big), r^{-1}r' \big)\\
&=\big( \alpha^{-1}_{h^{-1}ghk} \big(\langle w(h^{-1}gh) y, \varsigma(\alpha_{h^{-1}g} (\langle x, x' \rangle_A)) w(h^{-1}gh') y \rangle_A \big), r^{-1}r' \big)\\
&=\big(\langle w(k^{-1})y, w(k^{-1}h^{-1}g^{-1}h)\varsigma(\alpha_{h^{-1}g} (\langle x, x' \rangle_A)) w(h^{-1}gh') y \rangle_A \big), r^{-1}r' \big)\\
&=\big(\langle w(k^{-1})y, w(k^{-1}h^{-1})\varsigma(\langle x, x' \rangle_A) w(h') y \rangle_A \big), r^{-1}r' \big)\\
&= \big( \alpha^{-1}_k\big(\langle y, w(h^{-1}) \varsigma(\langle x, x' \rangle_A) w(h')y' \rangle_A \big), r^{-1}r' \big)\\
&=\big( \alpha^{-1}_k \big(\langle y, \varsigma(\alpha_{h^{-1}} (\langle x, x' \rangle_A)) w(h^{-1}h') y' \rangle_A \big), r^{-1}r' \big)\\
 &= \big\langle (x, h) \dot\otimes (y,k), (x', h') \dot\otimes (y',k') \big\rangle_{\B_\Sigma}. 
\end{align*}
It follows readily that there exists a $\B_\Sigma$-bundle map $U(g) = (U(g)_r)_{r\in G}: \X \otimes_\sigma \Y \to \X \otimes_\sigma \Y$ satisfying (\ref{Ugr}) for all $r, h\in G, x \in X, y \in Y$ and such that
\[\big\langle U(g)_r z, U(g)_{r'}z'\big\rangle_{\B_\Sigma} = \langle z, z'\rangle_{\B_\Sigma}\]
 for all $r, r'\in G$, $ z\in Z_r$ and $z' \in Z_{r'}$. Moreover,  for all $ b \in A, x\in X, y\in Y, g, h, k, l, r\in G$ with $hk=r$, we have
 \begin{align*}
 U(g)_{rl} \big(((x,h) \dot\otimes&  (y,k))\cdot(b,l)\big)
 = U(g)_{rl} \big((x,h) \dot\otimes  (y\alpha_k(b),kl)\big)\\
  &=  (x,g^{-1}h) \dot\otimes (w(h^{-1}gh)(y\alpha_k(b)), h^{-1}ghkl)\\ 
 &=   (x,g^{-1}h) \dot\otimes ((w(h^{-1}gh)y) \alpha_{h^{-1}gh}(\alpha_k(b)), h^{-1}ghkl)\\  
    &=   (x,g^{-1}h) \dot\otimes ((w(h^{-1}gh) y) \alpha_{h^{-1}ghk}(b),h^{-1}ghkl)\\
  &=   \big((x,g^{-1}h) \dot\otimes (w(h^{-1}gh) y,h^{-1}ghk)\big)\cdot (b,l)\\
  &=   \big(U(g)_r\big((x,h) \dot\otimes  (y,k)\big)\big)\cdot (b,l). 
   \end{align*}
   It follows that  $U(g)_{rl} \big(z\cdot(b, l)\big) = (U(g)_{r}z)\cdot(b,l)$ for all $g, r, l\in G$, $z\in Z_r$ and $b\in A$.
 
 Thus, each $U(g)$ is a Hilbert  $\B_\Sigma$-bundle embedding of $\X\otimes_\sigma \Y$ into itself. To see that each $U(g)$ is unitary, we have to show that  $U(g)_r$ is a bijection from $Z_r$ onto itself for every $g, r\in G$. For this, it suffices to observe that $U(g^{-1})_r$ and  $U(g)_r$ are inverses of each other for any $r$, as a reader can verify without difficulty.
\end{proof}

\begin{proposition}
For each $(a, g)\in \B_\Sigma$, define $\tau'(a,g):   \X \otimes_\sigma \Y \to  \X \otimes_\sigma \Y$ by
\[ \tau'(a,g)z = U(g)_{gr} (\rho \dot\otimes \sigma)(a,g) z\]
for all $r\in G$ and $z\in Z_r$, where $U(g)=(U(g)_r)_{r\in G}$ is the unitary as in Lemma \ref{U_g}. 
Then $\tau'$ is a $\B_\Sigma$-action on $\X \otimes_\sigma \Y$ satisfying
\[\tau'(a,g) \big((x,h) \dot\otimes (y,k)\big) = (\varrho(a)v(g)x, h) \dot\otimes (w(h^{-1}gh)y, h^{-1} ghk) \]
for all $a \in A$, $x\in X$, $y\in Y$, $g, h, r\in G$ with $hk =r$. 
\end{proposition}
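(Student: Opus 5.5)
The formula for $\tau'$ on elementary tensors comes straight from the definitions. By Theorem \ref{tensor-action}, $(\rho\dot\otimes\sigma)(a,g)$ sends $(x,h)\dot\otimes(y,k)\in Z_r$ to $(\varrho(a)v(g)x, gh)\dot\otimes(y,k)\in Z_{gr}$. Applying (\ref{Ugr}) for $U(g)_{gr}$, with $h$ replaced by $gh$ (and $(gh)k=gr$), gives
\[
(\varrho(a)v(g)x, g^{-1}gh)\dot\otimes\big(w((gh)^{-1}g(gh))y, (gh)^{-1}g(gh)k\big).
\]
Since $(gh)^{-1}g(gh)=h^{-1}gh$, this is exactly $(\varrho(a)v(g)x,h)\dot\otimes(w(h^{-1}gh)y, h^{-1}ghk)$, which lies in $Z_{h h^{-1}ghk}=Z_{gr}$, as required by (i).

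The action axioms are checked in the following order.

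\emph{Condition (i).} Linearity on each fibre is clear, since $\tau'(a,g)$ is a composition of linear maps $Z_r\to Z_{gr}\to Z_{gr}$. Bilinearity in $(a,z)$ holds because $U(g)$ does not depend on $a$.

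\emph{Condition (iv).} This follows from condition (iv) for $\rho\dot\otimes\sigma$ together with the first property of a Hilbert bundle embedding for $U(g)$, proved in Lemma \ref{U_g}.

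\emph{Condition (ii).} Take $(a,g)(a',g')=(a\alpha_g(a'),gg')$. By linearity and density of elementary tensors, together with boundedness of both sides, which follows from Lemma \ref{ip-ineq} and the isometry of $U(g)$, it suffices to compare the two sides on $(x,h)\dot\otimes(y,k)$. Applying $\tau'(a',g')$ first gives $(\varrho(a')v(g')x,h)\dot\otimes(w(h^{-1}g'h)y,h^{-1}g'hk)$. Applying $\tau'(a,g)$ next, with the same first index $h$, gives
\[
(\varrho(a)v(g)\varrho(a')v(g')x,h)\dot\otimes\big(w(h^{-1}gh)w(h^{-1}g'h)y,h^{-1}gg'hk\big).
\]
Covariance $v(g)\varrho(a')=\varrho(\alpha_g(a'))v(g)$ and multiplicativity of $v$ and $w$ identify this with $\tau'(a\alpha_g(a'),gg')$ of the same tensor. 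Here I assume $v$ and $w$ are honest (untwisted) equivariant representations, so that $w(s)w(t)=w(st)$.

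\emph{Condition (iii).} This is the expected main obstacle. We must show
\[
\langle\tau'(a,g)z,z'\rangle=\langle z,\tau'((a,g)^*)z'\rangle, \qquad (a,g)^*=(\alpha_{g^{-1}}(a^*),g^{-1}).
\]
The cleanest route is to use the identity $U(g)_{gr}(\rho\dot\otimes\sigma)(a,g)=(\rho\dot\otimes\sigma)(a,g)U(g)_r$. If it holds, then
\[
\langle U(g)(\rho\dot\otimes\sigma)(a,g)z,z'\rangle=\langle(\rho\dot\otimes\sigma)(a,g)z,U(g)^{-1}z'\rangle=\langle z,(\rho\dot\otimes\sigma)((a,g)^*)U(g^{-1})z'\rangle,
\]
using $U(g)^{-1}=U(g^{-1})$ from the proof of Lemma \ref{U_g}. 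The commutation then turns the right-hand side into $\langle z,\tau'((a,g)^*)z'\rangle$.

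The commutation itself is checked on elementary tensors. Computing $U(g)_r$ first, then $(\rho\dot\otimes\sigma)(a,g)$, gives $(\varrho(a)v(g)x,h)\dot\otimes(w(h^{-1}gh)y,h^{-1}ghk)$, matching the computation above. So the commutation holds, and $\tau'$ is the composite of two commuting pieces. Should some index bookkeeping fail here, the fallback is to verify (iii) directly on elementary tensors. That would use the inner product formula before Lemma \ref{U_g}, covariance $\varsigma(\alpha_s(c))=w(s)\varsigma(c)w(s)^*$, and the same manipulations as in the proof of Lemma \ref{U_g}.
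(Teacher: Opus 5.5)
Your proof is correct, but for (iii) and (iv) it takes a different route from the paper. The paper checks (ii), (iii) and (iv) by direct computation on elementary tensors $(x,h)\dot\otimes(y,k)$. For (iii) this means expanding both sides with the explicit inner-product formula and moving the $w$'s through $\varsigma$ by covariance.

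You instead isolate the commutation $U(g)_{gr}(\rho\dot\otimes\sigma)(a,g)=(\rho\dot\otimes\sigma)(a,g)U(g)_r$ on $Z_r$. Your check of it is right: both sides give $(\varrho(a)v(g)x,h)\dot\otimes(w(h^{-1}gh)y,h^{-1}ghk)$, and boundedness plus density extend it to all of $Z_r$. You then obtain (iii) formally from four ingredients:
\begin{itemize}
\item (iii) for $\rho\dot\otimes\sigma$;
\item the fact that $U(g)$ preserves inner products;
\item $U(g)^{-1}=U(g^{-1})$;
\item the commutation, applied once for $g$ and once for $g^{-1}$ with $(a,g)^*\in B_{g^{-1}}$.
\end{itemize}
Similarly, (iv) comes from (iv) for $\rho\dot\otimes\sigma$ together with the module property of $U(g)$.

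This is exactly the mechanism the paper only records afterwards in Remark~\ref{1-coc}: your commutation is condition 2) there. So your argument shows directly that $\{U(g)\}$ is a $1$-cocycle for $\rho\dot\otimes\sigma$, and it explains why (iii) holds. The cost is that it leans on the unitarity from Lemma~\ref{U_g}. The paper's computation is longer but self-contained, using only the explicit formula for $\tau'$.

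The commutation only pairs $U(g)$ with $(\rho\dot\otimes\sigma)(B_g)$, so it does not by itself give (ii). Your hands-on check of (ii), which coincides with the paper's, is therefore genuinely needed.

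One small slip, in your fallback only: $w(s)$ is $\alpha_s$-compatible rather than adjointable, so covariance should read $\varsigma(\alpha_s(c))=w(s)\varsigma(c)w(s)^{-1}$. This does not affect your main argument.
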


\begin{proof} First, for $a \in A$, $x\in X$, $y\in Y$, $g, h, r\in G$ with $hk =r$,  we have
\[\tau'(a,g) \big((x,h) \dot\otimes (y,k)\big) = U(g)_{gr} (\rho \dot\otimes \sigma)(a,g) \big((x,h) \dot\otimes (y,k)\big)
= U(g)_{gr} (\varrho(a)v(g)x, gh)\dot\otimes (y,k)\]
\[=  (\varrho(a)v(g)x, h) \dot\otimes (w(h^{-1}gh) y, h^{-1} ghk)\]
Next, for $g, r\in G$, it is clear that $\tau'(a,g)$ maps each $Z_r$ to $Z_{gr}$ and that
$((a,g), z) \mapsto \tau'(a,g)z $ is a bilinear map from $(A\times\{g\}) \times Z_r$ into $Z_{gr}$. Below, we check the other conditions for $\tau'$ to be a $\B_\Sigma$-action. 
 
Let $a, a', b \in A$, $g, g', r, h, k\in G$ with $hk=r$. Then we have 
\begin{align*} \tau'\big((a,g)\cdot (a', g')) \big((x, h) \dot\otimes (y, k)\big) &= \tau'(a\alpha_g(a'), gg') \big((x,h) \dot\otimes (y,k)\big)\\
&= \big(\varrho(a\alpha_g(a'))v(gg')x, h)\dot\otimes (w(h^{-1}gg'h)y, h^{-1} gg'hk).
\end{align*}
On the other hand,  
\begin{align*} \big(\tau'(a,g)\tau'(a', g')&\big) \big((x,h) \dot\otimes (y,k)\big)=
\tau'(a,g)\big((\varrho(a')v(g')x, h) \dot\otimes (w(h^{-1}g'h)y, h^{-1} g'hk)\big)\\
&=(\varrho(a) v(g) \varrho(a')v(g')x, h) \dot\otimes (w(h^{-1}gh)w(h^{-1}g'h)y, h^{-1} ghh^{-1} g'hk)\\
&=(\varrho(a\alpha_g(a'))v(gg')x, h) \dot\otimes (w(h^{-1}gg'h)y, h^{-1} gg'hk).
\end{align*}
It follows that  $\tau'\big((a,g)\cdot (a', g'))  = \tau'(a,g)\tau'(a', g')$. 

Moreover, we have
\begin{align*}
\langle  \tau'(a,g)&(\big((x,h) \dot\otimes (y,k)\big), (x',h') \dot\otimes (y',k')\big) \rangle_{\B_\Sigma}\\
&=\langle (\varrho(a)v(g)x,h) \dot\otimes (w(h^{-1}gh)y, h^{-1} ghk), (x', h') \dot\otimes (y',k')\big) \rangle_{\B_\Sigma}\\
&= \big( \alpha^{-1}_{h^{-1} ghk} \big(\langle w(h^{-1}gh)y, \varsigma(\alpha_{h^{-1}} (\langle \varrho(a)v(g)x, x' \rangle_A)) w(h^{-1}h') y' \rangle_A \big), (ghk)^{-1} h'k' \big)\\
&=  \big( \alpha^{-1}_{k} \big(\langle y, w(h^{-1}g^{-1}h) \varsigma(\alpha_{h^{-1}} (\langle \varrho(a)v(g)x, x' \rangle_A)) w(h^{-1}h') y' \rangle_A \big), (ghk)^{-1} h'k' \big)\\
&=  \big( \alpha^{-1}_{k} \big(\langle y, w(h^{-1}g^{-1}) \varsigma(\langle \varrho(a)v(g)x, x' \rangle_A) w(h') y' \rangle_A \big), (hk)^{-1} g^{-1}h'k' \big)\\
&= \big( \alpha^{-1}_k \big(\langle y, w(h^{-1})\varsigma(\alpha_{g^{-1}}(\langle \varrho(a)v(g) x, x' \rangle_A)) w(g^{-1}h') y' \rangle_A \big), (hk)^{-1} g^{-1}h'k' \big).
\end{align*}
On the other hand,
\begin{align*}
\big\langle (x,h) & \dot\otimes (y,k)\big), \tau'((a,g)^*)\big((x',h') \dot\otimes (y',k')\big) \big\rangle_{\B_\Sigma} \\
& = \big\langle  (x,h) \dot\otimes (y,k) , \tau'(\alpha_{g^{-1}}(a^*), g^{-1})\big((x', h') \dot\otimes (y',k')\big) \rangle_{\B_\Sigma} \\
& = \big\langle  (x,h) \dot\otimes (y,k) , (\varrho(\alpha_{g^{-1}}(a^*)v(g^{-1})x', h') \dot\otimes (w(h'^{-1}g^{-1}h') y', h'^{-1} g^{-1}h'k') \big\rangle_{\B_\Sigma} \\
&= \big( \alpha^{-1}_k \big(\langle y, \varsigma(\alpha_{h^{-1}} (\langle x, \varrho(\alpha_{g^{-1}}(a^*))v(g^{-1})x' \rangle_A)) w(h^{-1}h') w(h'^{-1}g^{-1}h') y' \rangle_A \big), (hk)^{-1} g^{-1}h'k' \big)\\
&= \big( \alpha^{-1}_k \big(\langle y, w(h^{-1})\varsigma(\langle x, \varrho(\alpha_{g^{-1}}(a^*))v(g^{-1})x' \rangle_A) w(g^{-1}h') y' \rangle_A \big), (hk)^{-1} g^{-1}h'k' \big)\\
&= \big( \alpha^{-1}_k \big(\langle y, w(h^{-1})\varsigma(\langle\varrho(\alpha_{g^{-1}}(a)) x, v(g^{-1})x' \rangle_A) w(g^{-1}h') y' \rangle_A \big), (hk)^{-1} g^{-1}h'k' \big)\\
&= \big( \alpha^{-1}_k \big(\langle y, w(h^{-1})\varsigma(\langle v(g^{-1})\varrho(a)v(g) x, v(g^{-1})x' \rangle_A)) w(g^{-1}h') y' \rangle_A \big), (hk)^{-1} g^{-1}h'k' \big)\\
&= \big( \alpha^{-1}_k \big(\langle y, w(h^{-1})\varsigma(\alpha_{g^{-1}}(\langle \varrho(a)v(g) x, x' \rangle_A)) w(g^{-1}h') y' \rangle_A \big), (hk)^{-1} g^{-1}h'k' \big).
\end{align*}
It follows that $\langle  \tau'(a,g) z, z'\rangle_{\B_\Sigma} = \langle  z,  \tau'((a,g)^*)z'\rangle_{\B_\Sigma}$ for all $z, z' \in \X\otimes_\sigma \Y$. 

Finally, we have
\begin{align*}
\big(\tau'(a,g)& \big((x, h) \dot\otimes (y,k)\big) \big) \cdot (b,l) = \big((\varrho(a)v(g)x, h) \dot\otimes (w(h^{-1}gh) y, h^{-1} ghk)\big)  \cdot (b,l) \\
&= (\varrho(a)v(g)x, h) \dot\otimes ((w(h^{-1}gh) y) \alpha_{h^{-1} ghk}(b), h^{-1} ghkl),
\end{align*}
while
\begin{align*}
\tau'(a,g)& \big(\big((x, h) \dot\otimes (y,k)\big)  \cdot (b,l) \big) = \tau'(a,g) \big((x, h) \dot\otimes \,(y\alpha_k(b), kl) \big)\\
&=(\varrho(a)v(g)x, h) \dot\otimes \,(w(h^{-1}gh)(y\alpha_k(b)), h^{-1}ghkl)\\
&=(\varrho(a)v(g)x, h) \dot\otimes \,((w(h^{-1}gh)y) \alpha_{h^{-1}gh}(\alpha_k(b)), h^{-1}ghkl)\\
&= (\varrho(a)v(g)x, h) \dot\otimes ((w(h^{-1}gh) y) \alpha_{h^{-1} ghk}(b), h^{-1} ghkl).
\end{align*}
It follows that $(\tau'(a,g) z)\cdot(b,k) = \tau'(a,g) (z\cdot(b,k))$ for all $z \in  \X\otimes_\sigma \Y$.

This shows that $\tau'$ is a $\B_\Sigma$-action on  $\X\otimes_\sigma \Y$.
\end{proof}

\begin{remark} \label{1-coc}
 Let $\B=(B_g)_{g\in G}$ be a Fell bundle over $G$ and $\rho$ be a $\B$-action on a Hilbert $\B$-bundle $\X$. Assume $U(g) =(U(g)_r)_{r\in G}$ is a unitary on $\X$ for each $g$ in $G$
 such that $U(g)_rU(g^{-1})_r = {\rm Id}_{X_r}$ for every $g, r \in G$. Define then $\rho':\B\to \mathcal{F}(\X)$ by
\[ \rho'(a) x = U(g)_{gr}(\rho(a)x)\]
for all $g,r \in G$, $a\in B_g$ and $x\in X_r$. One may now wonder under which conditions $\rho'$ is still a $\B$-action on $\X$. 
In that case, such a family $\{U(g)\}_{g\in G}$ of unitaries on $\X$ could then be called a $1$-cocycle for $\rho$, and $\rho'$ a 1-cocycle perturbation of $\rho$.

In fact, it is not difficult to check that $\rho'$ becomes a $\B$-action if and only if the $U(g)$'s satisfy the following conditions:
\begin{itemize}
\item[1)] $U(gh)_{ghr} \,\rho(ab) = U(g)_{ghr}\,\rho(a)\,U(h)_{hr}\,\rho(b)$ on $X_r$,
\item[2)] $U(g)_{gr} \rho(a) = \rho(a) U(g)_r$ on $X_r$
\end{itemize}
for all $g, h, r\in G$, $a\in B_g$ and $b\in B_h$. 
We note that by combining 1) and 2), we get
\[ \rho(ab)\,U(gh)_r = U(g)_{ghr}\,\rho(ab) \,U(h)_{r} \, \, \text{on $X_r$, since} \]
\[ \rho(ab)\,U(gh)_r = U(gh)_{ghr} \,\rho(ab) =U(g)_{ghr}\,\rho(a)\,U(h)_{hr}\,\rho(b) = U(g)_{ghr}\,\rho(ab) \,U(h)_{r}.\]
For completeness, we include the proof of the equivalence stated above. 

\noindent Let $g, h, r\in G$, $a\in B_g$ and $b\in B_h$. Then on $X_r$ we have 
$\rho'(a)\rho'(b) = U(g)_{ghr}\rho(a)U(h)_{hr}\rho(b)$, while $\rho'(ab) =U(gh)_{ghr}\rho(ab)$. So $\rho'(a)\rho'(b) = \rho'(ab) $ if and only if 1) holds. Further, if $x\in X_r$, then 
\[\rho'(a)(xb)= U(g)_{grh}(\rho(a)(xb)) = U(g)_{grh}((\rho(a)x)b) = (U(g)_{gr}\rho(a)x)b = (\rho'(a)x)b,\]
i.e., property (iv) for $\rho'$ to be a $\B$-action always holds. This is clearly true also for property (i).
Finally, if $s\in G$ and $y\in X_s$,
\begin{align*} 
\langle \rho'(a)x, y\rangle_\B = \langle U(g)_{gr}\rho(a)x, y\rangle_\B 
& = \langle U(g)_{gr}\rho(a)x, U(g)_s U(g^{-1})_s y\rangle_\B 
=  \langle \rho(a)x, U(g^{-1})_s y\rangle_\B \\
& =  \langle x, \rho(a^*)U(g^{-1})_s y\rangle_\B  ,
\end{align*}
while
\[\langle x, \rho'(a^*)y\rangle_\B = \langle x, U(g^{-1})_{g^{-1}s}\rho(a^*)y\rangle_\B.\]
Thus, $\rho'$ satisfies property (iii) if and only if  $\rho(a^*)U(g^{-1})_s = U(g^{-1})_{g^{-1}s}\rho(a^*)$, i.e., 2) holds
(replacing $a^*$ with $a$ and  $g^{-1}$ with $g$).
\end{remark}
In view of Remark \ref{1-coc}, we may think of $\tau'$ as a  ``cocycle deformation'' of $ \rho \dot\otimes \sigma$.

\begin{theorem}\label{dynsyst-emb}
For each $h \in G$,  there is a Hilbert $\B_\Sigma$-bundle embedding $V^h=(V^h_r)_{r \in G}$ of 
$ \mathcal{W} = ( (X\otimes_{\varsigma}Y)_r)_{r\in G}$ into $\X\otimes_\sigma \Y = (Z_r)_{r\in G}$
intertwining the $\B_\Sigma$-actions $\tau=\rho_{(\varrho\otimes\varsigma, v\otimes w)}$ and $\tau'$.
\end{theorem}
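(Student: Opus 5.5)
The plan is to write down $V^h_r$ explicitly on elementary tensors and then verify the three required properties, each by a direct computation. For fixed $h\in G$ and $r\in G$, I would put
\[ V^h_r\big((x\dot\otimes y, r)\big) := (x,h)\dot\otimes \big(w(h^{-1})y,\, h^{-1}r\big) \in X_h\otimes_{\sigma_{h^{-1}r}} Y_{h^{-1}r}\subseteq Z_r ,\]
for $x\in X$ and $y\in Y$, using the convention identifying $x\dot\otimes y\in X_k\otimes_{\sigma_l}Y_l$ (with $kl=r$) with an element of $Z_r$. The twist $w(h^{-1})$ is forced by the inner product computation below. If one tries $(v(t)x,h)\dot\otimes(w(s)y,h^{-1}r)$, matching the formula for $\langle\cdot,\cdot\rangle_{\B_\Sigma}$ on $\mathcal{W}$ requires $t=e$ and $s=h^{-1}$.

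First I would check that inner products are preserved on elementary tensors. Using the inner product formula on $\X\otimes_\sigma\Y$ with $k=h^{-1}r$ and $k'=h^{-1}r'$, we get $h^{-1}h'=e$, so $w(e)=1$. This gives
\[ \Big(\alpha^{-1}_{h^{-1}r}\big(\langle w(h^{-1})y, \varsigma(\alpha_{h^{-1}}(\langle x,x'\rangle_A))w(h^{-1})y'\rangle_A\big), r^{-1}r'\Big). \]
By covariance, $w(h^{-1})^*\varsigma(\alpha_{h^{-1}}(c))w(h^{-1}) = w(h)\varsigma(\alpha_{h^{-1}}(c))w(h^{-1})=\varsigma(c)$, and $\langle w(h^{-1})y,w(h^{-1})y''\rangle_A=\alpha_{h^{-1}}(\langle y,y''\rangle_A)$. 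So the first entry equals $\alpha_{r^{-1}h}\alpha_{h^{-1}}(\langle y,\varsigma(\langle x,x'\rangle_A)y'\rangle_A)=\alpha_r^{-1}(\langle y,\varsigma(\langle x,x'\rangle_A)y'\rangle_A)$. This is exactly $\langle (x\dot\otimes y,r),(x'\dot\otimes y',r')\rangle_{\B_\Sigma}$ in $\mathcal{W}$.

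By sesquilinearity, this identity extends to finite sums. Hence the assignment is well defined on the algebraic tensor product: a sum that vanishes in $X\otimes_\varsigma Y$ has zero inner product with itself, so its image has norm zero. It is also isometric, so it extends by continuity to $V^h_r:(X\otimes_\varsigma Y)\times\{r\}\to Z_r$, and the identity $\langle V^h_r z, V^h_{r'}z'\rangle_{\B_\Sigma}=\langle z,z'\rangle_{\B_\Sigma}$ holds throughout. This well-definedness and extension step is the only non-computational point, and I expect it to be the main (mild) obstacle. In particular, one must be careful that the separated completion defining $Z_r$ behaves well with respect to the images, which follows since inner products are preserved.

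Next I would check compatibility with the right action. For $(b,l)\in\B_\Sigma$ we have $(x\dot\otimes y,r)\cdot(b,l)=(x\dot\otimes y\alpha_r(b),rl)$, whose image is $(x,h)\dot\otimes(w(h^{-1})(y\alpha_r(b)),h^{-1}rl)$. Since $w(h^{-1})(y\alpha_r(b))=(w(h^{-1})y)\alpha_{h^{-1}r}(b)$, this equals $\big(V^h_r(x\dot\otimes y,r)\big)\cdot(b,l)$ by the formula for the right action on $Z_r$.

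Finally, for intertwining I would use the formula for $\tau'$ from the preceding proposition with $k=h^{-1}r$. This gives
\[ \tau'(a,g)V^h_r(x\dot\otimes y,r)=(\varrho(a)v(g)x,h)\dot\otimes\big(w(h^{-1}gh)w(h^{-1})y,\,h^{-1}gr\big). \]
Since $w(h^{-1}gh)w(h^{-1})=w(h^{-1})w(g)$, the right-hand side equals $V^h_{gr}\big((\varrho(a)v(g)x\dot\otimes w(g)y,gr)\big)=V^h_{gr}(\tau(a,g)(x\dot\otimes y,r))$. All identities pass from elementary tensors to all of $\mathcal{W}$ by linearity and continuity, which proves the theorem.
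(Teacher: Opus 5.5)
Your proposal is correct and is essentially the paper's proof: the same formula $V^h_r\big((x\dot\otimes y,r)\big)=(x,h)\dot\otimes\big(w(h^{-1})y,h^{-1}r\big)$ and the same three computations (inner products, right action, and intertwining with $\tau'$ via $w(h^{-1}gh)w(h^{-1})=w(h^{-1})w(g)$). The only small difference is how you get well-definedness on $X\otimes_\varsigma Y$: you derive it directly from preservation of inner products (null vectors go to zero), whereas the paper first checks $A$-balancedness of $V^{h,0}_r$; your route is valid and makes that check unnecessary.
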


\begin{proof}
Let $h\in G$. For each $r\in G$, let $V^{h, 0}_r:X\odot Y \times\{r\} \to Z_r$ be the linear map determined by
 \[  V^{h,0}_r(x\otimes y, r) =  (x, h) \dot\otimes (w(h^{-1})y, h^{-1}r) \quad (\text{identified as an element in } Z_r)\]
 for all $x \in X$ and $y\in Y$.  
   For $a\in A, x\in X, y\in Y$ and $r\in G$, we have
   \begin{align*}
    V^{h, 0}_r((x\cdot a)\otimes y, r) &= (x\cdot a, h) \dot\otimes (w(h^{-1})y, h^{-1}r)\\
    & = ((x, h)\cdot (\alpha_{h^{-1}}(a),e)) \dot\otimes (w(h^{-1})y, h^{-1}r)\\
    &= (x, h) \dot\otimes ( [\sigma(\alpha_{h^{-1}}(a), e)](w(h^{-1})y, h^{-1}r))\\
    & = (x, h) \dot\otimes (\varsigma(\alpha_{h^{-1}}(a))w(h^{-1}) y, h^{-1}r)\\
    &= (x, h) \dot\otimes w(h^{-1})\varsigma(a) y, h^{-1}r)\\
    &= V^{h, 0}_r(x\otimes \varsigma(a)y, r)
    \end{align*} 
   It follows that $V^{h,0}_r$ induces a linear map  $V^h_r:X\otimes_A Y \times\{r\} \to Z_r$ satisfying  \[  V^h_r(x\dot\otimes y, r) = (x, h) \dot\otimes (w(h^{-1})y, h^{-1}r)\] 
  for all $x \in X$ and $y\in Y$. 
   Now, let $x, x'\in X, y, y'\in Y$ and $r, r'\in G$. Note that 
 \begin{align*}
 \langle V^h_r(x\dot\otimes y, r), V^h_{r'}(x'\dot\otimes y', r')\rangle_{\B_\Sigma}
 &=\langle (x, h) \dot\otimes (w(h^{-1})y, h^{-1}r), (x', h) \dot\otimes (w(h^{-1})y', h^{-1}r')\big\rangle_{\B_\Sigma}\\
 &= \big( \alpha^{-1}_{h^{-1}r} \big(\langle w(h^{-1}) y, \varsigma(\alpha_{h^{-1}} (\langle x, x' \rangle_A))  w(h^{-1})y' \rangle_A \big), r^{-1}r' \big)\\
 &= \big( \alpha^{-1}_{h^{-1}r}  \big(\langle w(h^{-1}) y, w(h^{-1}) \varsigma(\langle x, x' \rangle_A) y' \rangle_A \big), r^{-1}r' \big)\\
  &= \big( \alpha^{-1}_{r} \big(\langle y, \varsigma(\langle x, x' \rangle_A)  y' \rangle_A \big), r^{-1}r' \big)\\
&=  \langle (x \otimes y, r), (x'\otimes y', r')\big\rangle_{\B_\Sigma}.
\end{align*}
By linearity, we get that 
   \begin{equation}\label{vrr}
 \langle V^h_r(z), V^h_{r'}(z')\rangle_{\B_{\Sigma}} = \langle z, z'\rangle_{\B_\Sigma}
 \end{equation}
   for all $r, r' \in G,  z\in X\otimes_A Y \times \{r\}$ and $ z' \in X\otimes_A Y \times \{r'\}$.
   In particular, $V^h_r$ is isometric, so it extends uniquely to an isometry, also denoted by $V^h_r$, from  $(X\otimes_{\varsigma}Y)_r=X\otimes_{\varsigma}Y \times\{r\}$ to $Z_r$. By continuity, we get that (\ref{vrr}) holds for all $r, r' \in G,  z\in (X\otimes_{\varsigma}Y)_r$ and $ z' \in (X\otimes_{\varsigma}Y)_{r'}$.

Moreover, for any $r, g\in G, x\in X, y\in Y$ and $a\in A$,  we get 
 \begin{align*}
 V^h_r\big( x \dot\otimes y, r\big)\cdot (a,g)&=
 \big((x, h) \dot\otimes (w(h^{-1})y, h^{-1}r)\big)\cdot (a,g)\\
 & = (x, h)\dot\otimes \,((w(h^{-1})y)\alpha_{h^{-1}r}(a), h^{-1}rg)\\
  & = (x, h) \dot\otimes (w(h^{-1})(y \alpha_r(a)), h^{-1}rg) \\
&= V^h_{rg}\big(x\dot\otimes (y\alpha_r(a), rg\big) 
= V^h_{rg}\big((x \dot\otimes y, r)\cdot (a,g)\big),
 \end{align*}
 and also
 \begin{align*}
\tau'(a,g) V_r^h(x \dot\otimes y, r) & = \tau'(a,g) \big((x, h) \dot\otimes (w(h^{-1})y, h^{-1}r)\big) \\
& = (\rho(a)v(g)x,h) \dot\otimes (w(h^{-1}gh) w(h^{-1})y, h^{-1}gr) \\
& = (\rho(a)v(g)x,h) \dot\otimes (w(h^{-1})w(g)y, h^{-1}gr) \\
& = V_{gr}^h  (\rho(a)v(g)x \dot\otimes w(g)y, gr) \\
& = V_{gr}^h (\tau(a,g) (x \dot\otimes y,r)).
\end{align*}
It is then clear that $V^h:=(V^h_r)_{r\in G}$ is a Hilbert $\B_\Sigma$-bundle embedding of 
$\mathcal{W} =\big((X\otimes_{\varsigma}Y)_r\big)_{r\in G}$
into $\X\otimes_\sigma \Y=(Z_r)_{r\in G}$ intertwining $\tau$ and $\tau'$.
\end{proof}

Another evidence illustrating that the $\B_\Sigma$-actions $\tau=\rho_{(\varrho\otimes\varsigma, v\otimes w)}$ and $\rho\dot\otimes\sigma$ are related comes from the comparison of the associated (elementary) diagonal coefficients over the unit fibres, as we discuss below.

Let $h\in G$, $x \in X$ and  $y\in Y$. Then, thinking of $(x, h) \dot\otimes (y, h^{-1})$ as an element in the unit fibre of $\X\otimes_\sigma\mathcal{Y}$, the associated (elementary) diagonal coefficient $T = (T_g)_{g\in G}$ of $\rho\dot\otimes \sigma$ is given by
\begin{align*} T_g((a,g)) &= \langle (x, h) \dot\otimes (y, h^{-1}), (\rho\dot\otimes\sigma)((a,g)) ((x, h) \dot\otimes (y, h^{-1})) \rangle_{\B_\Sigma}\\
&=\langle (x, h) \dot\otimes (y, h^{-1}),  (\varrho(a)v(g)x, gh) \dot\otimes (y, h^{-1}) \rangle_{\B_\Sigma}\\
&= \big(  \alpha_{h}(\langle y ,  \varsigma(\alpha_{h^{-1}}(\langle x , \varrho(a)v(g)x\rangle_A))w(h^{-1}gh) y\rangle_A), g\big)
\\&= \big(\langle w(h) y ,  w(h)\varsigma(\alpha_{h^{-1}}(\langle x , \varrho(a)v(g)x\rangle_A)w(h^{-1}gh) y\rangle_A, g\big)
\\&= \big(\langle w(h) y ,  \varsigma(\langle x , \varrho(a)v(g)x\rangle_A)w(g)\, w(h) y\rangle_A, g\big)\,
\end{align*}
for every $g\in G$ and $a\in A$. 

On the other hand, $(x\dot\otimes y , e)$ belongs to the unit fibre $(X\otimes_{\varsigma}Y)_e$ of $\mathcal{W}$, and we have
\begin{align*}
\big\langle (x\dot\otimes y , e), \, \tau((a, g)) (x\dot\otimes y , e)\big\rangle_{\B_\Sigma} & = 
\big\langle (x\dot\otimes y , e), \, (\varrho(a)v(g)x\dot\otimes w(g) y, g) \big\rangle_{\B_\Sigma} \\
& = \big(\langle  y, \varsigma(\langle x, \varrho(a)v(g)x\rangle_A)w(g) y\rangle, g\big).
\end{align*}
Replacing $y$ by $w(h)y$ in this computation, we see that 
\[ \big\langle (x\dot\otimes w(h)y , e), \, \tau((a, g)) (x\dot\otimes w(h)y , e)\big\rangle_{\B_\Sigma} = T_g((a, g))\] 
for every $g\in G$ and $a\in A$. This shows that an (elementary) diagonal coefficient $T$ of $\rho\dot\otimes \sigma$ over the unit fibre of $\X\otimes_\sigma\mathcal{Y}$ can also be expressed as an (elementary) diagonal coefficient of $\tau$ over the unit fibre of  $\mathcal{W}=( (X\otimes_{\varsigma}Y)_g)_{g\in G}$. Moreover, the converse statement is easily seen to be true as well.

 We also note that 
 the diagonal coefficient $\tilde T = (\tilde T_g)_{g\in G}$ of the equivariant representation $(\varrho\otimes\varsigma, v\otimes w)$ of 
 $\Sigma$ on $X\otimes_{\varsigma} Y$ associated to the vector $x\dot\otimes w(h)y$  is given by 
 \begin{align*} \tilde T_g(a) &= \big\langle x\dot\otimes w(h)y, \,(\varrho\otimes\varsigma)(a) (v\otimes w)(g) (x\dot\otimes w(h)y)\big\rangle_A\\
 &=  \big\langle x\dot\otimes w(h)y, \varrho(a) v(g)x \dot\otimes w(g)w(h)y\big\rangle_A \\
 & =\big\langle  w(h)y, \varsigma\big(\langle x, \varrho(a) v(g)x\rangle_A\big) w(g)w(h)y\big\rangle_A. \end{align*}

  Letting $T' = (T'_g)_{g\in G}$ be the induced $\B_\Sigma$-bundle map given by $T'_g((a,g)) = (\tilde T_g(a) , g)$ for all $g\in G$ and $a\in A$,
  we get
\[T'_g((a,g)) = \big(\big\langle  w(h)y, \varsigma\big(\langle x, \varrho(a) v(g)x\rangle_A\big) w(g)w(h)y\big\rangle_A, g\big) = T_g((a, g)),\]
i.e., $T'$ agrees with $T$. This shows that the positive definite $\B_\Sigma$-bundle maps obtained from  $\rho\dot\otimes \sigma$ 
as above can be recovered by working only with tensor product of equivariant representations of $\Sigma$.

\begin{remark}
Although the relationship between $\rho_{(\varrho,v)}\dot\otimes\rho_{(\varsigma, w)}$ and $\rho_{(\varrho,v) \otimes (\varsigma,w)}$ discussed above is more complicated than expected, it is somewhat reassuring to observe that there is a perfect matching when dealing with direct sums. Namely, it can be checked without too much trouble that 
if, for each $i$ in some index set $I$, $(\varrho_i,v_i)$ is an  an equivariant representation of $\Sigma$ on a Hilbert $A$-module $X_i$, 
and $\big(\oplus_{i \in I} \varrho_i, \oplus_{i \in I} v_i\big)$ is the direct sum of these equivariant representations on $\oplus_{i \in I} X_i$ (see \cite[Section 2]{BeCo6}),  then
the $\B_\Sigma$-action $\rho_{(\oplus_{i \in I} \varrho_i, \oplus_{i \in I} v_i)}$ on the Hilbert $\B_\Sigma$-bundle  $\Big(\big(\oplus_{i \in I} X_i\big) \times \{g\}\Big)_{g \in G}$
is unitarily equivalent (in the sense of Definition \ref{unitequiv}) to the direct sum action $\oplus_{i \in I} \rho_{(\varrho_i,v_i)}$ on the Hilbert $\B_\Sigma$-bundle $\Big(\oplus_{i \in I} (X_i \times \{g\})\Big)_{g \in G}$  defined  in Section \ref{DirectSum}.
\end{remark}

\subsection{Tensor products of actions of Fell bundles} \label{tp-general}
We will now discuss the more general case. Let $G, H, K$ be groups,  $\varphi \in {\rm Hom}(G,H)$ and $\psi \in {\rm Hom}(H, K)$.  
 Consider some Fell bundles  $\A=(A_g)_{g\in G}$, $\B=(B_h)_{h\in H}$, and $\C=(C_k)_{k\in K}$, and  assume we are given  
an $(\A, \varphi)$-action $\rho$ on  a  Hilbert $\B$-bundle $\X=(X_h)_{h\in H}$ and  
 a $(\B,\psi)$-action $\sigma$ on a  Hilbert $\C$-bundle $\Y = (Y_k)_{k\in K}$.

Then, proceeding essentially along the same lines as in subsection \ref{tp-b-actions}, we may  form a  Hilbert $\C$-bundle $\X\boxtimes_{\sigma} \Y$,  
and define an $(\A,\psi\circ \varphi)$-action $\rho \dot\boxtimes \sigma$ on  $\X\boxtimes_\sigma \Y$. 
The construction follows the one described in Remark \ref{boxaction} for the case where $G=H=K$, $\varphi= \psi ={\rm id}_G$ and $\A=\B=\C$, as it still makes sense in the general situation. For completeness, we provide some details, indicating only changes when necessary.

For each $b \in B_e$ and $k \in K$, the restriction of $\sigma(b)$ to $Y_k$ is an adjointable map from $Y_k$ into itself (considered as a  Hilbert $C_e$-module), which we denote by $\sigma_k(b)$. Then the map $b\mapsto \sigma_k(b)$ is a $*$-homomorphism $\sigma_k$ from $B_e$ into $\L_{C_e}(Y_k)$.
  Thus, for any $h \in H, k \in K$,  considering $X_h$ as  a Hilbert $B_e$-module and $Y_k$ as a $C_e$-module, their internal tensor product 
 $X_h \otimes_{\sigma_k} Y_k$ is then a  Hilbert $C_e$-module such that
\[(x \cdot b) \dot\otimes y = x \,\dot\otimes \sigma_k(b)y \text{  and  } \langle x \dot\otimes y, x' \dot\otimes y' \rangle_{C_e} = \langle y, \sigma_k(\langle x, x' \rangle) y' \rangle_{C_e}\] for all  $x,x' \in X_h$,  $y,y' \in Y_k$ and $b \in B_e$. 

 For every $r \in K$, set 
  \[W^0_r := \Big\{ \xi \in \prod_{h \in H} (X_h \otimes_{\sigma_{\psi(h)^{-1}r} } Y_{\psi(h)^{-1}r}) \mid \xi \text{ is finitely supported}\Big\}\]
 and $\mathcal{W}^0 :=\coprod_{r\in G} W^0_r$.
 If $x \in X_{s}$ and $y\in Y_k$ for some $s\in H$ and $k\in K$ such that $\psi(s)k =r$, we will frequently use $x\dot\otimes y$ 
 to denote the element $\xi_{x, y} \in W^0_r$ given by 
\[ \xi_{x, y}(h) = \begin{cases}x\dot\otimes y & \text{if } h=s,\\ \, \, 0 &\text{if } h\neq s.
\end{cases} 
\]
 
  We will now define a right action of $\C$ on $\mathcal{W}^0$,
 $(z,c)\mapsto z\cdot c$, and a $\C$-valued semi-inner product on $\mathcal{W}^0$,
 $(z, z') \mapsto \langle z, z'\rangle_\C$, which will turn $\mathcal{W}^0=(W^0_r)_{r\in K}$ into a 
semi-inner product $\C$-bundle.

Let $h \in H$, $r\in K$,  $x \in X_{h}$, $y \in Y_{\psi(h)^{-1}r}$ and  $c \in C_k$. 
  Note  that $y \cdot c \in Y_{\psi(h)^{-1}rk}$, so we can set
  \[ (x \dot\otimes y) \cdot c := x \dot\otimes (y \cdot c) \in X_{h} \otimes_{\psi(h)^{-1}rk} Y_{\psi(h)^{-1}rk}.\]
One routinely checks that the map $x\dot\otimes y \mapsto (x \dot\otimes y) \cdot c$
extends to a linear map $z\mapsto z \cdot c$ from $X_{h} \otimes_{\sigma_{\psi(h)^{-1}r}} Y_{\psi(h)^{-1}r}$ into $X_{h} \otimes_{\sigma_{\psi(h)^{-1}rk}} Y_{\psi(h)^{-1}rk}$ such that $\|z\cdot c\| \leq \|z\| \, \|c\|$.
We then obtain a bilinear map  $(\xi, c) \mapsto \xi \cdot c$ from $W^0_r\times C_k$ into $W^0_{rk}$ by setting
\[ (\xi \cdot c)(h) := \xi(h)\cdot c \quad \text{for all } h\in H.\]

Next, let $h, h'\in H$, $r, r'\in K$. Then one checks that there is a unique well-defined bilinear map 
 $(z, z')\mapsto \langle z, z' \rangle_\C$ from $ (X_h \otimes_{\sigma_{\psi(h)^{-1}r} } Y_{\psi(h)^{-1}r}) \times (X_{h'} \otimes_{\sigma_{\psi(h')^{-1}r'} } Y_{\psi(h')^{-1}r'})$ into  $C_{r^{-1}r'}$ such that  
 \[\big\langle x \dot\otimes y, x' \dot\otimes y' \big\rangle_\C  = \big\langle y, \sigma(\langle x,x' \rangle_\B) \,y' \big\rangle_\C \]
 for all $x \in X_{h}$, $y \in Y_{\psi(h)^{-1}r}$, $x' \in X_{h'}$ and $y' \in Y_{\psi(h')^{-1}r'}$. Indeed, since $\langle x,x' \rangle_\B \in B_{h^{-1}h'}$, we get 
 \[\sigma(\langle x,x' \rangle_\B) y' \in Y_{\psi(h^{-1}h')\psi(h')^{-1}r'} = Y_{\psi(h)^{-1}r'},\]
 hence that 
 \[\big\langle y, \sigma(\langle x,x' \rangle_\B)\, y' \big\rangle_\C \in C_{(\psi(h)^{-1}r)^{-1}\psi(h)^{-1}r'} = C_{r^{-1}r'}.\]
 Moreover, as 
 \[\|\langle z, z' \big\rangle_\C\| \, \leq \, \|z\| \, \|z'\|\]
for all $z \in X_h \otimes_{\sigma_{\psi(h)^{-1}r} } Y_{\psi(h)^{-1}r}$ and $z'\in X_{h'} \otimes_{\sigma_{\psi(h')^{-1}r'} } Y_{\psi(h')^{-1}r'}$, 
we get a bilinear map\\ $(\xi, \xi')\mapsto 
  \langle \xi, \xi' \big\rangle_\C$ from $W_r^0 \times W_{r'}^0$ into $C_{r^{-1}r'}$ given by
  \[  \langle \xi, \xi' \big\rangle_\C = \sum_{h, h'\in G}  \langle \xi(h), \xi'(h') \big\rangle_\C,\]
  for all $\xi \in W_r^0$ and $\xi'\in W_{r'}^0$. This map is easily seen to satisfy
  \[  \langle \xi, \xi'\cdot c \big\rangle_\C =  \langle \xi, \xi' \big\rangle_\C \, c \quad \text{ and }  \big(\langle \xi, \xi' \big\rangle_\C\big)^* =  \langle \xi', \xi \big\rangle_\C\]
  for all $\xi \in W_r^0, \xi'\in W_{r'}^0$ and $c \in \C$. As in subsection \ref{tp-b-actions}, one may adapt the argument in the proof of \cite[Lemma 5.9]{AF} (or the one in the proof of Lemma \ref{ip-ineq}) to show that $\langle \xi, \xi \big\rangle_\C \, \geq 0$ (in $C_e$) for every $\xi \in W_r^0$. Thus $W_r^0$ is a
  semi-inner product $C_e$-module with  seminorm given by $\|\xi\| := \|\langle \xi, \xi \big\rangle_\C \|^{1/2}$, satisfying that
  \[ \|\xi\cdot c\|\leq  \|\xi\|\, \| c\| \quad \text{ and} 
  \quad \|\langle \xi, \xi' \big\rangle_\C \| \leq \, \|\xi\| \|\xi'\|\]
  for all $\xi \in W_r^0, \xi'\in W_{r'}^0$ and $c\in \C$. 
 
 We have thereby constructed maps  $\mathcal{W}^0\times \B \to \mathcal{W}^0$ and  $\mathcal{W}^0 \times \mathcal{W}^0 \to \B$ 
  satisfying all the properties required for $\mathcal{W}^0$ to be a semi-inner product $\C$-bundle. After separation and completion,
   we obtain a Hilbert $\C$-bundle ${\mathcal W} = (W_r)_{r \in K}$, that we will denote by $\X\boxtimes_\sigma \Y$. 
 Recall that if $h\in H$ and $r\in K$, we may think of $X_h\otimes\sigma_{\psi(h)^{-1}r} Y_{\psi(h)^{-1}r}$ as a subspace of $W_r^0$. 
 For $x \in X_h$ and $y\in Y_{\psi(h)^{-1}r}$, we will also write $x\dot\otimes y$ to denote its image in $W_r$.
 
  The  $(\A, \psi\circ\varphi)$-action $\rho\boxtimes \sigma$ on $\X\boxtimes_\sigma \Y$
  is defined as follows. 
Let $g\in G, r \in K$ and  $a \in A_g$. Moreover, let $h\in H$, $x \in X_{h}$ and $y \in Y_{\psi(h)^{-1}r}$, so
  $x \dot\otimes y \in W_r^0$. Then $\rho(a) x \in X_{\varphi(g)h}$, so 
 \[(\rho(a) x)\dot\otimes y \in X_{\varphi(g)h} \otimes_{\sigma_{\psi(h)^{-1}r}} Y_{\psi(h)^{-1}r} =
  X_{\varphi(g)h} \otimes_{\sigma_{\psi(\varphi(g)h)^{-1} \psi(\varphi(g))r}} Y_{\psi(\varphi(g)h)^{-1}\psi(\varphi(g))r}.\] 
  We therefore set
 \[(\rho(a)\otimes \iota) (x \dot\otimes y) := (\rho(a) x) \dot\otimes y\,.\]
 Arguing as in subsection \ref{tp-b-actions}, we get a linear map \[\rho(a)\otimes \iota: X_{h} \otimes_{\psi(h)^{-1}r} Y_{\psi(h)^{-1}r} \to 
  X_{\varphi(g)h} \otimes_{\sigma_{\psi(\varphi(g)h)^{-1} \psi(\varphi(g))r}} Y_{\psi(\varphi(g)h)^{-1}\psi(\varphi(g))r},\]
 and in turn a linear map $(\rho\boxtimes \sigma)(a) : W^0_r\to W^0_{\psi(\varphi(g))r}$ by setting 
  \[ \big([(\rho\boxtimes \sigma)(a)]\xi\big)(h) := (\rho(a)\otimes \iota)\xi(\varphi(g)^{-1}h)\]
 for all $\xi\in W^0_r$ and $h\in H$.
 After adapting the proof of Lemma \ref{ip-ineq} to the present situation, one can conclude that $\rho\boxtimes \sigma$ is a pre-$(\A, \psi\circ \varphi)$-action on $\mathcal{W}^0$ (see Remark \ref{pre-act-def}). Thus, it can be extended to an $(\A, \psi\circ \varphi)$-action $\rho\dot\boxtimes \sigma$ on $\mathcal{W}=\X\boxtimes_\sigma \Y$.  
 
When $G=H=K$, $\varphi= \psi ={\rm id}_G$, and $\A=\B=\C$, we recover the construction in subsection \ref{tp-b-actions} (up to unitary equivalence). 
For referential purposes, we sum up our discussion in this subsection.

\begin{theorem} \label{gen-tensor-action}
 Let $G, H, K$ be groups,  $\varphi \in {\rm Hom}(G,H)$ and $\psi \in {\rm Hom}(H, K)$,  
and  $\A=(A_g)_{g\in G}$, $\B=(B_h)_{h\in H}$, $\C=(C_k)_{k\in K}$ be Fell bundles.  
Further, let $\rho$ be an $(\A, \varphi)$-action on  a  Hilbert $\B$-bundle $\X=(X_h)_{h\in H}$ and  
$\sigma$ be a $(\B,\psi)$-action  on a  Hilbert $\C$-bundle $\Y = (Y_k)_{k\in K}$.
 
 Then there exists a Hilbert $\C$-bundle 
 $\X\boxtimes_\sigma \Y = (W_r)_{r\in K}$ such that for each $r\in K$, $W_r$ contains a subset $\{x \dot\otimes y: 
 x \in X_h, y\in  Y_{\psi(h)^{-1}r}, \text{ for some } 
 h\in H\}$ whose span is dense in $W_r$, with operations satisfying 
 \[(x \dot\otimes y)\cdot c = x \dot\otimes (y\cdot c), 
 \quad \langle x \dot\otimes y, x' \dot\otimes y'\rangle_\C = \big\langle y, \sigma(\langle x, x'\rangle_\B)y'\big\rangle_\C\] 
 for all $ h, h'\in H,  r, r'$,  $x \in X_{h}$,  $y \in Y_{\psi(h)^{-1}r}$, $c \in \C$, $x' \in X_{h'}$ and $y' \in Y_{{\psi(h')^{-1}r'}}$. 
 
 Moreover, there is an $(\A, \psi\circ \varphi)$-action $\rho \dot\boxtimes \sigma$ on 
 $\X\boxtimes_\sigma \Y$ satisfying that 
 \[
(\rho \dot\boxtimes \sigma)(a) (x \dot\otimes y) = (\rho(a) x) \dot\otimes y
 \]
 for all $a \in \A, h \in H,  r \in K$,  $x \in X_{h}$ and $y \in Y_{\psi(h)^{-1}r}$.
  \end{theorem}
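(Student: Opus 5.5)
The plan is to follow the same blueprint as in subsection \ref{tp-b-actions} and Remark \ref{boxaction}, now with the general indexing, and to isolate the one non-formal ingredient: a matrix positivity argument in Abadie--Ferraro algebras.

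\textbf{Step 1: the bundle $\X\boxtimes_\sigma\Y$.} I would take the vector spaces $W^0_r$ ($r\in K$) constructed above, with the right $\C$-action $(\xi\cdot c)(h)=\xi(h)\cdot c$ and the pairing $\langle\xi,\xi'\rangle_\C=\sum_{h,h'}\langle\xi(h),\xi'(h')\rangle_\C$. Well-definedness on each $X_h\otimes_{\sigma_{\psi(h)^{-1}r}}Y_{\psi(h)^{-1}r}$ comes from the universal property of the algebraic balanced tensor product. The fibre bookkeeping $\langle x\dot\otimes y,x'\dot\otimes y'\rangle_\C\in C_{r^{-1}r'}$ has already been verified. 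The identities $\langle\xi,\xi'\cdot c\rangle=\langle\xi,\xi'\rangle c$ and $\langle\xi,\xi'\rangle^*=\langle\xi',\xi\rangle$ follow from the axioms (3) for $\Y$ together with condition (iii) for $\sigma$.

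\textbf{Step 2: positivity of the pairing (the main obstacle).} For $\xi\in W^0_r$, I would write $\xi(h_i)=\sum_{j=1}^N x^i_j\dot\otimes y^i_j$ over the support $h_1,\dots,h_m$. Then
\[
\langle\xi,\xi\rangle_\C=\sum\langle y^i_j,\sigma(\langle x^i_j,x^k_l\rangle_\B)y^k_l\rangle_\C .
\]
The matrix $R=[\langle x^i_j,x^k_l\rangle_\B]$ lies in $M_{\mathbf h}(\B)$ with $\mathbf h=(h_{(i,j)})$, $h_{(i,j)}=h_i$, and it is positive by \cite[Lemma 3.8]{BeCo8}. I would then use a $*$-homomorphism $L^\sigma:M_{\mathbf h}(\B)\to\L_{C_e}\big(\bigoplus Y_{\psi(h_i)^{-1}r}\big)$, given by $(L^\sigma(R)\eta)_{(i,j)}=\sum\sigma(r_{(i,j),(k,l)})\eta_{(k,l)}$. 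This is the analogue of Remark \ref{AF-action} for the $(\B,\psi)$-action $\sigma$, translated by $r$. Each entry maps $Y_{\psi(h_k)^{-1}r}$ into $Y_{\psi(h_i)^{-1}r}$, and adjointability comes from condition (iii). With $\mathbf Y=(y^i_j)$, this gives $\langle\xi,\xi\rangle_\C=\langle\mathbf Y,L^\sigma(R)\mathbf Y\rangle\ge0$.

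Checking that $L^\sigma$ is a well-defined $*$-homomorphism is where care is needed. I would either verify this directly or reduce to Remark \ref{AF-action} by shifting the fibres by $r$. Once positivity holds, the Cauchy--Schwarz argument used in subsection \ref{tp-b-actions} gives $\|\langle\xi,\xi'\rangle\|\le\|\xi\|\|\xi'\|$ and $\|\xi\cdot c\|\le\|\xi\|\|c\|$. Remark \ref{pre_H} then yields the Hilbert $\C$-bundle $\W=\X\boxtimes_\sigma\Y$. Density of the span of the elementary tensors holds by construction.

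\textbf{Step 3: the action.} For $a\in A_g$, I would first prove the analogue of the unnumbered lemma: $\sum(\rho(a)x_j)\dot\otimes y_j$ has inner product at most $\|a\|^2$ times that of $\sum x_j\dot\otimes y_j$. This follows from Lemma \ref{Lance-like} (with the entries now in $B_e$) pushed through the $*$-homomorphism $\widetilde\sigma_k$. It makes $\rho(a)\otimes\iota$ well defined.

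I would then define $(\rho\boxtimes\sigma)(a)$ on $W^0_r$ as above and prove the analogue of Lemma \ref{ip-ineq}. Namely, Lemma \ref{Lance-like} gives $[\langle\rho(a)x^i_j,\rho(a)x^k_l\rangle]\le\|a\|^2R$ in $M_{\mathbf h}(\B)$. The indices shift by $\varphi(g)$, but the matrix algebra is unchanged since $(\varphi(g)h_i)^{-1}\varphi(g)h_k=h_i^{-1}h_k$. Applying $L^\sigma$ and pairing with $\mathbf Y$ yields the pre-action inequality.

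Conditions (i)--(iv) for the pre-action are then checked on elementary tensors. Condition (ii) is clear, and (iv) is immediate since $\rho$ acts on the left leg while $\C$ acts on the right leg. Condition (iii) follows by the same computation as in (\ref{rho-adj}), using condition (iii) for $\rho$ inside $\sigma(\cdot)$. Remark \ref{pre-act-def} then gives the $(\A,\psi\circ\varphi)$-action $\rho\dot\boxtimes\sigma$ with the stated formula.
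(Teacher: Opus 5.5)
Your proposal is correct and follows the paper's own route. The bundle is built from the spaces $W^0_r$ of subsection \ref{tp-general} with the same right action and pairing, positivity and the pre-action inequality come from adapting Lemma \ref{ip-ineq}, and Remark \ref{pre-act-def} finishes; your explicit $L^\sigma$ on $\bigoplus Y_{\psi(h_i)^{-1}r}$ is exactly the adaptation the paper leaves implicit.

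Two small points. You should check $L^\sigma$ directly, since shifting Remark \ref{AF-action} by $r$ only works when $r\in\psi(H)$. Also, Lemma \ref{Lance-like} is stated only for fibres $X_{\varphi(g_j)}$, so for arbitrary $h_i\in H$ you are relying on its proof, which carries over verbatim, rather than its statement.
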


\section{Tensor product actions in action}
In this section we illustrate how  tensor product actions naturally appear in various situations.

\subsection{$C^*$-correspondences of tensor product actions}
In \cite[Section 6.1]{BeCo8} we showed how actions of Fell bundles can be used to build natural $C^*$-correspondences.
Here we discuss the relationship between the $C^*$-correspondence associated to a tensor product action and the internal tensor product of the $C^*$-correspondences associated to the given actions.

\medskip Let $G, H, K$ be groups,  $\varphi \in {\rm Hom}(G,H)$ and $\psi \in {\rm Hom}(H, K)$.  
 Consider some Fell bundles  $\A=(A_g)_{g\in G}$, $\B=(B_h)_{h\in H}$, and $\C=(C_k)_{k\in K}$. 
Assume we are given an $(\A, \varphi)$-action $\rho$ on  a  Hilbert $\B$-bundle $\X=(X_h)_{h\in H}$ and  
 a $(\B,\psi)$-action $\sigma$ on a  Hilbert $\C$-bundle $\Y = (Y_k)_{k\in K}$.

We can then form the  $C^*(\A)$-$C^*(\B)$ correspondence $\X\rtimes\B$ associated with $\rho$ and the $C^*(\B)$-$C^*(\C)$ correspondence $\Y\rtimes\C$ associated to $\sigma$. In turn, we can consider the  $C^*(\A)$-$C^*(\C)$ correspondence
\[ (\X\rtimes\B)\otimes_{C^*(\B)} (\Y\rtimes\C).\]
On the other hand, we can also form the $(\A, \psi\circ\varphi)$-action $\rho\dot\boxtimes \sigma$ on the Hilbert $\C$-bundle $\X\boxtimes_\sigma\Y$, which gives rise to a $C^*(\A)$-$C^*(\C)$ correspondence $(\X\boxtimes_\sigma\Y)\rtimes{\C}$.   

\medskip We recall that the $C^*(\A)$-$C^*(\B)$-correspondence $\X\rtimes\B$ is the completion of the inner-product $C_c(\A)$-$C_c(\B)$-module $C_c(\X)$ consisting of all finitely supported sections $\xi$  from $H$ to $\X$, equipped with the operations given by
\begin{align*} 
(f\cdot \xi)(h) &= \sum_{g\in G} \rho(f(g)) \xi(\varphi(g)^{-1}h),\\
(\xi\cdot f')(h)  &:= \sum_{h'\in H} \xi(h') \cdot f'({h'}^{-1}h),\\
\langle \xi, \eta\rangle_{C_c(\B)}(h) &:= \sum_{h'\in H} \, \langle \xi(h'), \eta(h'h) \rangle_\B,
\end{align*}
for all  $f\in C_c(\A)$, $f'\in C_c(\B)$, $\xi, \eta \in C_c(\X)$ and $h\in H$.
To simplify the notation, we have here identified $C_c(\B)$  with its canonical copy in $C^*(\B)$ (via the map $\kappa^\B$). The norm of $f\in C_c(\B)$ in $C^*(\B)$  will be denoted by $\|f\|_{C^*(\B)}$. The $C^*(\B)$-$C^*(\C)$-correspondence $\Y\rtimes\C$ is defined similarly as the completion of $C_c(\Y)$, with operations given by
\begin{align*} 
(f'\cdot \xi')(k) &= \sum_{h\in H} \sigma(f'(h)) \xi'(\psi(h)^{-1}k),\\
(\xi'\cdot f'')(k)  &:= \sum_{k'\in K} \xi'(k') \cdot f''({k'}^{-1}k),\\
\langle \xi', \eta'\rangle_{C_c(\C)}(k) &:= \sum_{k'\in K} \, \langle \xi'(k'), \eta'(k'k) \rangle_\C,
\end{align*}
for all  $f'\in C_c(\B)$, $f''\in C_c(\C)$, $\xi', \eta' \in C_c(\Y)$ and $k\in K$.

We then get that $(\X\rtimes\B)\otimes_{C^*(\B)} (\Y\rtimes\C)$ can be identified with the completion of the inner product $C_c(\X) \otimes_{C_c(\A)} C_c(\C)$-module satisfying that 
\[\xi \dot\otimes (f'\cdot \xi')= (\xi\cdot f') \dot\otimes \xi',\] 
whose operations are determined by
\begin{align*} f\cdot (\xi \dot\otimes \xi') &= (f\cdot \xi) \dot\otimes \xi',\\
(\xi \dot\otimes \xi')\cdot f{''}&=\xi \dot\otimes (\xi'\cdot f''),\\
\big\langle \xi \dot\otimes \xi', \eta\dot\otimes \eta'\big\rangle_{C_c(\C)} &= \big\langle \xi',  \langle \xi, \eta\rangle_{C_c(\B)}\cdot\eta'\big\rangle_{C_c(\C)},
\end{align*} 
 for all $f \in C_c(\A), f'\in C_c(\B), f{''}\in C_c(\C), \xi, \eta \in C_c(\X)$ and $\xi', \eta'\in C_c(\Y)$.
 
 On the other hand, set $\mathcal{W} :=\X\boxtimes_\sigma\Y$. Recall that 
   \[W^0_k := \Big\{ \xi \in \prod_{h \in H} (X_h \otimes_{\sigma_{\psi(h)^{-1}k} } Y_{\psi(h)^{-1}k}) \mid \xi \text{ is finitely supported}\Big\}\]
   and that $\mathcal{W}$ is obtained from $(W^0_k)_{k\in K}$ after separation and completion.
   Then the $C^*(\A)$-$C^*(\C)$ correspondence $\mathcal{W}\rtimes{\C}$ is the completion of $C_c(\mathcal{W})$ with operations similar to those above. 
   
   Let now $\xi\in C_c(\X), \xi'\in C_c(\Y)$. Then for each $k\in K$ we can define $\zeta^{\xi, \xi'}(k)\in W^0_k$ by
 \[ [\zeta^{\xi, \xi'}(k)](h) = \xi(h) \dot\otimes \xi'(\psi(h)^{-1}k) \in X_h \otimes_{\sigma_{\psi(h)^{-1}k} } Y_{\psi(h)^{-1}k}\]
 for all $h\in H$.  Set $S= {\rm supp}(\xi) \subset H$, $S' = {\rm supp}(\xi') \subset K$, which both are finite. Note that if $k \not \in \bigcup_{h \in S} \psi(h)S'$, then $\zeta^{\xi, \xi'}(k) = 0$. Thus, $\zeta^{\xi, \xi'} \in C_c(\mathcal{W})$.  Moreover, 
 for $\xi, \eta \in C_c(\X)$ and $\xi', \eta'\in C_c(\Y)$ , we have  
 \[\big\langle \zeta^{\xi, \xi'}, \zeta^{\eta, \eta'}\big\rangle_{C_c(\C)} = \big\langle \xi\dot\otimes \xi', \eta\dot\otimes \eta'\big\rangle_{C_c(\C)}.\]
 Indeed, for every $k\in K$, we have
 
\begin{align*}
\big\langle \zeta^{\xi, \xi'}, \zeta^{\eta, \eta'}\big\rangle_{C_c(\C)}(k) &= \sum_{k'\in K} \big\langle \zeta^{\xi, \xi'}(k'), \zeta^{\eta, \eta'}(k'k)\big\rangle_\C\\
&= \sum_{k'\in K} \sum_{h, h'\in H} \big\langle [\zeta^{\xi, \xi'}(k')](h'), [\zeta^{\eta, \eta'}(k'k)](h)\big\rangle_\C\\
&= \sum_{k'\in K} \sum_{h, h'\in H} \big\langle \xi(h') \dot\otimes \xi'(\psi(h')^{-1}k'),\eta(h) \dot\otimes \eta'(\psi(h)^{-1}k'k)\big\rangle_\C\\
&= \sum_{k'\in K} \sum_{h, h'\in H} \big\langle \xi'(\psi(h')^{-1}k'), \sigma\big(\langle \xi(h'), \eta(h)\rangle_\B\big) \eta'(\psi(h)^{-1}k'k)\big\rangle_\C\\
&=  \sum_{h, h'\in H} \sum_{k'\in K} \big\langle \xi'(\psi(h')^{-1}k'), \sigma\big(\langle \xi(h'), \eta(h)\rangle_\B\big) \eta'(\psi(h)^{-1}k'k)\big\rangle_\C,
\end{align*}
 while 
 \begin{align*}
\big\langle \xi\dot\otimes \xi', \eta\dot\otimes \eta'\big\rangle_{C_c(\C)}(k)&=\big\langle \xi',  \langle \xi, \eta\rangle_{C_c(\B)}\cdot\eta'\big\rangle_{C_c(\C)}(k)
 \\
 &= \sum_{k'\in K} \big\langle \xi'(k'),  \big(\langle \xi, \eta\rangle_{C_c(\B)}\cdot\eta'\big)(k'k)\big\rangle_\C\\
  &= \sum_{k'\in K} \sum_{h\in H} \big\langle \xi'(k'),  \sigma\big(\langle \xi, \eta\rangle_{C_c(\B)}(h)\big) \eta'(\psi(h)^{-1}k'k)\big\rangle_\C\\
  &= \sum_{k'\in K} \sum_{h, h'\in H} \big\langle \xi'(k'),  \sigma\big(\langle \xi(h'), \eta(h'h) \rangle_\B\big) \eta'(\psi(h)^{-1}k'k)\big\rangle_\C\\
    &= \sum_{k'\in K} \sum_{ h', h''\in H} \big\langle \xi'(k'),  \sigma\big(\langle \xi(h'), \eta(h'') \rangle_\B\big) \eta'(\psi({h'}^{-1}h'')^{-1}k'k)\big\rangle_\C\\
   &=  \sum_{ h', h''\in H} \sum_{k'\in K} \big\langle \xi'(k'),  \sigma\big(\langle \xi(h'), \eta(h'') \rangle_\B\big) \eta'(\psi({h'}^{-1}h'')^{-1}k'k)\big\rangle_\C\\
&=  \sum_{ h', h''\in H} \sum_{k''\in K} \big\langle \xi'(\psi(h')^{-1}k''),  \sigma\big(\langle \xi(h'), \eta(h'') \rangle_\B\big) \eta'(\psi(h'')^{-1}k''k)\big\rangle_\C,
 \end{align*}
where we have replaced $h$ with ${h'}^{-1}h''$ at the fifth step, and $k'$ with $\psi(h')^{-1}k''$ at the last step, this being possible as all sums are finite.

Consider now $f''\in C_c(\C)$, $\xi \in C_c(\X)$ and $\xi' \in C_c(\Y)$. Then for every $k\in K$ and $h\in H$, we have 
\begin{align*}
[\zeta^{ \xi, \xi' \cdot f''}(k)](h) &= \xi(h) \dot\otimes (\xi' \cdot f'')(\psi(h)^{-1}k)\\ 
&=\xi(h)\dot\otimes  \Big(\sum_{k'\in K} \xi'(k') \cdot f'' (k'^{-1} \psi(h)^{-1}k) \Big)\\
&= \sum_{k'\in K} \xi(h)\dot\otimes \big(\xi'(\psi(h)^{-1}k') \cdot f'' (k'^{-1} k) \big)\\\
&= \sum_{k' \in K} \Big(\xi(h) \dot\otimes  \xi'(\psi(h)^{-1}k')\Big) \cdot f''(k'^{-1}k)  \\
&= \sum_{k' \in K} \big[\zeta^{\xi, \xi'}(k')\big](h) \cdot f''(k'^{-1}k)  \\
&= \Big[\sum_{k' \in K} \zeta^{\xi, \xi'}(k') \cdot f''(k'^{-1}k) \Big](h) \\
& = [(\zeta^{\xi, \xi'} \cdot f'')(k)](h),
\end{align*} 
where in the third line we put $k' = \psi(h)^{-1}k''$ and then replaced back $k''$ with $k'$. 

Thus we get
\begin{align*} 
\Xi( (\xi \dot\otimes \xi')\cdot f'') &= \Xi(\xi \dot\otimes (\xi'\cdot f''))= \zeta^{\xi, \xi'\cdot f''} =
\zeta^{\xi, \xi'}\cdot f''= \Xi( \xi \dot\otimes \xi')\cdot f'' \, .
\end{align*}  

 It follows now readily that the map $\xi\dot\otimes \xi'\mapsto \zeta^{\xi, \xi'}$ extends to a morphism $\Xi$ of  Hilbert $C^*$-modules from $(\X\rtimes\B)\otimes_{C^*(\B)} (\Y\rtimes\C)$ to $(\X\boxtimes_\sigma\Y)\rtimes{\C}$, meaning that $\Xi$  is a linear map preserving the inner-product and intertwining the right actions. 
 
\medskip 

The induced left action of $C^*(\A)$ on 
 $\mathcal{W}\rtimes \C= (\X \boxtimes_\sigma \Y)\rtimes \C$ is determined by
 \[ (f\cdot \zeta)(k) = \sum_{g\in G} [(\rho\boxtimes\sigma)(f(g))] \zeta(\psi(\varphi(g))^{-1}k)\]
 for all $f\in C_c(\A)$, $\zeta\in C_c(\mathcal{W})$ and $k\in K$.
  Let us now check that we get the desired intertwining property.
Let $f\in C_c(\A)$, $\xi \in C_c(\X)$ and $\xi' \in C_c(\Y)$. Then for every $k\in K$ and $h\in H$, we have
\begin{align*} 
[\zeta^{ f\cdot \xi, \xi'}(k)](h) &= (f\cdot \xi)(h) \dot\otimes \xi'(\psi(h)^{-1}k)\\ &= \sum_{g\in G} \rho(f(g)) \xi(\varphi(g)^{-1}h)\dot\otimes \xi'(\psi(h)^{-1}k)\\
&=  \sum_{g\in G} 
(\rho(f(g))\otimes \iota) \Big(\xi(\varphi(g)^{-1}h) \dot\otimes \xi'(\psi(h)^{-1}k)\Big)\\
&=  \sum_{g\in G} (\rho(f(g))\otimes \iota) \Big(\xi(\varphi(g)^{-1}h) \dot\otimes \xi'(\psi(\varphi(g)^{-1}h)^{-1}\psi(\varphi(g))^{-1}k)\Big)\\
&= \sum_{g\in G} (\rho(f(g))\otimes \iota) \Big([\zeta^{\xi, \xi'}(\psi(\varphi(g))^{-1}k)](\varphi(g)^{-1}h) \Big)\\
&= \sum_{g\in G} \Big([(\rho\boxtimes\sigma)(f(g))] \zeta^{\xi, \xi'}(\psi(\varphi(g))^{-1}k)\Big)(h) \\
& = [(f\cdot \zeta^{\xi, \xi'})(k)](h).
\end{align*} 
Thus we get
\begin{align*} 
\Xi(f\cdot (\xi \dot\otimes \xi')) &= \Xi((f\cdot \xi) \dot\otimes \xi') = \zeta^{ f\cdot \xi, \xi'} =
f\cdot \zeta^{\xi, \xi'} = f\cdot \Xi( \xi \dot\otimes \xi'),
\end{align*}
 as desired. It follows that $\Xi$ intertwines the left actions.
 Hence we have shown the following:
 
 \begin{theorem} \label{corr-morph} Let $G, H, K$ be groups, $\A=(A_g)_{g\in G}$, $\B=(B_h)_{h\in H}$, and $\C=(C_k)_{k\in K}$ be  Fell bundles,  $\varphi \in {\rm Hom}(G,H)$ and $\psi \in {\rm Hom}(H, K)$.  

Assume 
$\rho$ is an $(\A, \varphi)$-action  on  a  Hilbert $\B$-bundle $\X=(X_h)_{h\in H}$ and  
 $\sigma$ is  a $(\B,\psi)$-action on a  Hilbert $\C$-bundle $\Y = (Y_k)_{k\in K}$.
Let  $\X\rtimes\B$ be the $C^*(\A)$-$C^*(\B)$ correspondence associated with $\rho$,  $\Y\rtimes\C$ be the $C^*(\B)$-$C^*(\C)$ correspondence associated to $\sigma$, and $(\X\boxtimes_\sigma\Y)\rtimes{\C}$ be the $C^*(\A)$-$C^*(\C)$ correspondence associated to the $(\A, \psi\circ\varphi)$-action $\rho\dot\boxtimes \sigma$ on the Hilbert $\C$-bundle $\X\boxtimes_\sigma\Y$.

Then the map $\xi\dot\otimes \xi'\mapsto \zeta^{\xi, \xi'}$ from $C_c(\X) \otimes_{C_c(\B)} C_c(\Y)$ to $C_c(\X\boxtimes_\sigma\Y)$, determined for each $k\in K$ by 
 \[ [\zeta^{\xi, \xi'}(k)](h) = \xi(h) \dot\otimes \xi'(\psi(h)^{-1}k) \in X_h \otimes_{\sigma_{\psi(h)^{-1}k} } Y_{\psi(h)^{-1}k}\]
 for all $h\in H$, 
 extends to a morphism $\Xi$ of  $C^*$-correspondences from $(\X\rtimes\B)\otimes_{C^*(\B)} (\Y\rtimes\C)$ to $(\X\boxtimes_\sigma\Y)\rtimes{\C}$, meaning that $\Xi$  is a linear map preserving the inner-product and intertwining the left and the right actions.
  \end{theorem}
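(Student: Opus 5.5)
The plan is to work at the level of the dense pre-Hilbert modules and then pass to completions by continuity. First I would check that $\zeta^{\xi,\xi'}$ is well defined and finitely supported. For fixed $k$ it is supported in $S={\rm supp}(\xi)$. Globally it vanishes outside the finite set $\bigcup_{h\in S}\psi(h)S'$. Its values lie in $W^0_k$, so its image in $C_c(\mathcal W)$ makes sense after separation.

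Next I would check that the assignment $(\xi,\xi')\mapsto\zeta^{\xi,\xi'}$ is bilinear and balanced over $C_c(\B)$, i.e.\ $\zeta^{\xi\cdot f',\xi'}=\zeta^{\xi,f'\cdot\xi'}$ in $C_c(\mathcal W)$. Here the main obstacle appears. The identity $(x\cdot b)\dot\otimes y=x\dot\otimes\sigma(b)y$ is only built into $X_h\otimes_{\sigma_k}Y_k$ for $b\in B_e$. For $b\in B_{h'}$ with $h'\neq e$, the elements $(xb)\dot\otimes y$ and $x\dot\otimes\sigma(b)y$ live in different summands $h$ and $hh'$ of $W^0_k$, so they are not literally equal.

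My approach to this step is to avoid proving balancedness directly and instead use the inner-product identity. I would first prove
\[
\langle \zeta^{\xi,\xi'},\zeta^{\eta,\eta'}\rangle_{C_c(\C)}=\big\langle \xi',\langle\xi,\eta\rangle_{C_c(\B)}\cdot\eta'\big\rangle_{C_c(\C)}
\]
by the double-sum reindexing $h\mapsto h'^{-1}h''$, $k'\mapsto\psi(h')^{-1}k''$. With this in hand, the element $\zeta^{\xi\cdot f',\xi'}-\zeta^{\xi,f'\cdot\xi'}$ has zero inner product with itself. Expanding gives four terms, each computed by the identity, and these cancel because $\langle\xi\cdot f',\eta\rangle=(f')^*\langle\xi,\eta\rangle$ in $C_c(\B)$ and the left $C_c(\B)$-action on $C_c(\Y)$ is adjointable. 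So the difference lies in the null space and vanishes in the Hausdorff completion. The same argument shows that any finite sum $\sum\xi_i\dot\otimes\xi'_i$ which is zero in $(\X\rtimes\B)\otimes_{C^*(\B)}(\Y\rtimes\C)$ is sent to a null vector. Hence the induced map $\Xi$ on the algebraic balanced tensor product is well defined and inner-product preserving, in particular isometric.

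Finally I would verify the module compatibilities on elementary tensors. For the right action, use $\zeta^{\xi,\xi'\cdot f''}=\zeta^{\xi,\xi'}\cdot f''$, which follows by substituting $k'\mapsto\psi(h)^{-1}k'$ in the convolution defining $\xi'\cdot f''$. For the left action, use $\zeta^{f\cdot\xi,\xi'}=f\cdot\zeta^{\xi,\xi'}$. This comes from unwinding $(f\cdot\xi)(h)=\sum_g\rho(f(g))\xi(\varphi(g)^{-1}h)$ and the formula $(\rho\boxtimes\sigma)(a)\xi(h)=(\rho(a)\otimes\iota)\xi(\varphi(g)^{-1}h)$, noting that $\psi(h)^{-1}k=\psi(\varphi(g)^{-1}h)^{-1}\psi(\varphi(g))^{-1}k$.

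Since $\Xi$ is isometric on a dense subspace, it extends by continuity to the completions. Right $C_c(\C)$-linearity and left $C_c(\A)$-equivariance extend because all the actions are bounded, the left ones being $*$-representations of $C^*(\A)$. This yields the asserted morphism of $C^*$-correspondences.
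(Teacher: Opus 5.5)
Your proposal is correct and follows essentially the same route as the paper: the inner-product identity $\langle \zeta^{\xi,\xi'},\zeta^{\eta,\eta'}\rangle_{C_c(\C)}=\langle \xi\dot\otimes\xi',\eta\dot\otimes\eta'\rangle_{C_c(\C)}$ via the same reindexing, the right- and left-action compatibilities checked on elementary tensors, and extension by continuity. The only addition is your explicit null-vector argument for well-definedness on the balanced tensor product. You rightly note that $(xb)\dot\otimes y$ and $x\dot\otimes\sigma(b)y$ lie in different summands of $W^0_k$ and agree only modulo null vectors; the paper leaves this step to the phrase ``it follows now readily'', and your argument fills that gap correctly.
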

 It seems unlikely that the above map $\Xi$ is surjective in general.
  Although we will not discuss it here, we expect that a result similar to Theorem \ref{corr-morph}  holds in the reduced case as long as the $C^*$-correspondences associated with the reduced cross-sectional $C^*$-algebras exist (see \cite[Section 6.3]{BeCo8} for some sufficient conditions). 

\subsection{Induction of representations I} Assume  $\A=(A_g)_{g\in G}$ and $\B=(B_g)_{g\in G}$ are Fell bundles over $G$ and $\rho$ is an
$\A$-action on a  Hilbert $\B$-bundle $\X=(X_g)_{g\in G}$.  Moreover, assume $\pi = (\pi_g)_{g\in G}$ is a
$*$-representation of $\B $ on some  Hilbert $C$-module $Y$, where $C$ is a $C^*$-algebra. Then, letting $\C= (C \times \{g\})_{g\in G}$  be the natural Fell bundle over $G$ associated to $C$,  we may form the $\B$-action $\rho_\pi$ associated to $\pi$ on the Hilbert $\C$-bundle $\Y = (Y\times \{g\})_{g\in G}$, given by
\[\rho_\pi(b)(y, h) = \big(\pi_g(b)y, gh\big) \] 
for all $g, h\in G$, $b \in B_g$ and $y\in Y$ (cf.~\cite[Example 4.15]{BeCo8}). 

We may now form the tensor product of $\rho$ and $\rho_\pi$ and obtain an $\A$-action $\rho\dot\boxtimes\rho_\pi$ on the Hilbert $\C$-bundle $\X\boxtimes_{\rho_{\pi}} \Y$. 
Such an action $\rho\dot\boxtimes\rho_\pi$ may be seen as a kind of analog of 
the tensor product of an equivariant representation with a covariant representation
 of a unital $C^*$-dynamical system introduced in  \cite[Lemma 4.9]{BeCo3}.

The action $\rho\dot\boxtimes\rho_\pi$ is in fact related to an $\A$-action arising from a $*$-representation of $\A$ on a certain Hilbert $C^*$-module. To be more precise, let $\X \rtimes \B$ be the $C^*(\A)$-$C^*(\B)$-correspondence associated to $\rho$, cf.~\cite[Section 6.1]{BeCo8}, and denote the left action of $C^*(\A)$ on $\X \rtimes \B$ 
by $\Phi_\rho$.  Moreover, let $\widetilde\pi: C^*(\B)\to \mathcal{L}_C(Y)$ be the $*$-representation associated to $\pi$. Then 
we get an induced $*$-representation
$\widetilde{{\rm Ind}_\rho(\pi)}: C^*(\A) \to \mathcal{L}_C((\X \rtimes \B)\otimes_{\widetilde\pi} Y)$, determined by
\[ (\widetilde{{\rm Ind}_\rho(\pi)}f)(\xi \dot\otimes y) = (\Phi_\rho(f)\xi) \dot\otimes y\]
for all $f\in C_c(\A)$, $\xi\in C_c(\X)$ and $y\in Y$.  
The  disintegrated form of $\widetilde{{\rm Ind}_\rho(\pi)}$ is a $*$-representation  ${\rm Ind}_\rho(\pi)$ of $\A$ on the Hilbert $C$-module $(\X \rtimes \B)\otimes_{\widetilde\pi} Y$, satisfying 
\[ [\big({\rm Ind}_\rho(\pi)\big)(a)](\xi \dot\otimes y) =(\Phi_\rho(a \odot g)\xi) \dot\otimes y\]
for all $g\in G$, $a\in A_g$, $\xi\in C_c(\X)$ and $y\in Y$. 
 In turn, ${\rm Ind}_\rho(\pi)$ induces an $\A$-action
$\rho_{{\rm Ind}_\rho(\pi)}$ on the Hilbert $\C$-bundle $\big(((\X \rtimes \B)\otimes_{\widetilde\pi} Y) \times \{g\}\big)_{g \in G}$, satisfying
\[\rho_{{\rm Ind}_\rho(\pi)}(a)(\Xi \dot\otimes y, h) 
= ((\Phi_\rho(a \odot g)\Xi) \dot\otimes y, gh)\]
for all $g,h \in G$, $a \in A_g$, 
$\Xi \in \X \rtimes \B$ and $y \in Y$. 

\begin{theorem} \label{Ind-prop} The $\A$-action $\rho_{{\rm Ind}_\rho(\pi)}$  on the Hilbert $\C$-bundle $\big(((\X \rtimes \B)\otimes_{\widetilde\pi} Y) \times \{g\}\big)_{g \in G}$ is  unitarily equivalent to the $\A$-action $\rho\dot\boxtimes\rho_\pi$ on $\X\boxtimes_{\rho_{\pi}} \Y$.
\end{theorem}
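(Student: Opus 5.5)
We construct the unitary fibrewise, defining it on the dense spanning sets given by Theorem \ref{gen-tensor-action}. Here $G=H=K$ and $\varphi=\psi={\rm id}_G$. For $r\in G$, the fibre $W_r$ of $\X\boxtimes_{\rho_\pi}\Y$ is spanned densely by elements $x\dot\otimes (y,h^{-1}r)$ with $h\in G$, $x\in X_h$ and $y\in Y$. Let $\delta_x\in C_c(\X)$ denote the section supported at $h$ with value $x$. We define
\[ U_r\big(x\dot\otimes (y,h^{-1}r)\big) := \big(\delta_x \dot\otimes y,\; r\big)\in ((\X\rtimes\B)\otimes_{\widetilde\pi}Y)\times\{r\}.\]
The central step is to check that this map preserves $\C$-valued inner products. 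We then extend it by linearity and continuity, which also shows it is well defined on the separated completion.

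\emph{Inner products.} Take $x\in X_h$ and $x'\in X_{h'}$. The inner product on $\X\rtimes\B$ gives $\langle \delta_x,\delta_{x'}\rangle_{C_c(\B)} = \langle x,x'\rangle_\B\odot h^{-1}h'$. Hence
\[\langle \delta_x\dot\otimes y,\delta_{x'}\dot\otimes y'\rangle_C = \langle y,\pi_{h^{-1}h'}(\langle x,x'\rangle_\B)y'\rangle_C.\]
The inner product of the images in the $\C$-bundle is therefore $(\langle y,\pi(\langle x,x'\rangle_\B)y'\rangle_C,\,r^{-1}r')$.

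On the other side, the formula of Theorem \ref{gen-tensor-action} gives
\[\langle x\dot\otimes(y,h^{-1}r),x'\dot\otimes(y',h'^{-1}r')\rangle_\C=\big\langle (y,h^{-1}r),\rho_\pi(\langle x,x'\rangle_\B)(y',h'^{-1}r')\big\rangle_\C.\]
Since $\rho_\pi(\langle x,x'\rangle_\B)(y',h'^{-1}r') = (\pi(\langle x,x'\rangle_\B)y',\,h^{-1}r')$, this equals the same expression, using the trivial-bundle inner product $\langle (y,s),(y',s')\rangle_\C=(\langle y,y'\rangle_C,s^{-1}s')$. Consequently $U=(U_r)$ is well defined and isometric, and it respects inner products across different fibres.

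\emph{Module compatibility.} For $c\in C$, we have $(x\dot\otimes (y,s))\cdot(c,g)= x\dot\otimes (yc,sg)$, which maps to $(\delta_x\dot\otimes yc, rg)=(\delta_x\dot\otimes y,r)\cdot(c,g)$. So $U$ is a Hilbert $\C$-bundle embedding in the sense of Definition \ref{unitequiv}.

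\emph{Intertwining.} Take $a\in A_g$. We have $\Phi_\rho(a\odot g)\delta_x=\delta_{\rho(a)x}$, because the left action formula gives $(f\cdot\xi)(k)=\rho(f(g))\xi(g^{-1}k)$, which is supported at $gh$. Hence
\[U_{gr}\big((\rho\dot\boxtimes\rho_\pi)(a)(x\dot\otimes (y,h^{-1}r))\big)=(\delta_{\rho(a)x}\dot\otimes y,gr)=\rho_{{\rm Ind}_\rho(\pi)}(a)\,U_r\big(x\dot\otimes(y,h^{-1}r)\big).\]
The second argument $h^{-1}r=(gh)^{-1}gr$ is unchanged, as it should be.

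\emph{Surjectivity.} This is the step needing an argument. Elements $\delta_x\dot\otimes y$ with $x$ ranging over all fibres span $C_c(\X)\odot Y$, because every $\xi\in C_c(\X)$ is a finite sum of $\delta$'s. By construction $C_c(\X)$ is dense in $\X\rtimes\B$. The algebraic tensor product is dense in $(\X\rtimes\B)\otimes_{\widetilde\pi}Y$, using the bound $\|\xi\dot\otimes y\|\le\|\xi\|\|y\|$. Moreover, for each fixed $r$, every $h\in G$ occurs, since $y\in Y$ is arbitrary and the $K$-index $h^{-1}r$ is free. So the range of the isometry $U_r$ is dense, and being isometric it is closed; hence $U_r$ is onto.

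The main obstacle I expect is the well-definedness bookkeeping. One must ensure that the inner-product identity on elementary tensors passes correctly through the separation and completion in both constructions. The identity is bilinear and the elementary tensors are total, so it extends to finite sums. This forces null vectors to map to null vectors, and therefore $U_r$ is well defined.
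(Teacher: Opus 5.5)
Your proof is correct and follows essentially the same route as the paper. The paper defines the unitary in the opposite direction, by $[U^0_r(\eta\dot\otimes y,r)](h)=\eta(h)\dot\otimes(y,h^{-1}r)$ for $\eta\in C_c(\X)$, $y\in Y$. Your map $x\dot\otimes(y,h^{-1}r)\mapsto(\delta_x\dot\otimes y,r)$ is exactly its inverse, and both arguments carry out the same three verifications: preservation of $\C$-valued inner products across fibres, compatibility with the right $\C$-action, and intertwining of $\rho\dot\boxtimes\rho_\pi$ with $\rho_{{\rm Ind}_\rho(\pi)}$.
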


\begin{proof} We only sketch the proof, as adding more details do not bring any new insight.  Let $r\in G$ and set
\[W^0_r := \Big\{ \xi \in \prod_{h \in H} (X_h \otimes_{({\rho_\pi})_{h^{-1}r} } (Y \times \{h^{-1}r\}) \mid \xi \text{ is finitely supported}\Big\}.\] 
Then we have
 \[ \big((\rho\boxtimes \rho_\pi)(a)\xi\big)(h) := (\rho(a)\otimes \iota)\xi(g^{-1}h), \  h \in H \]
 for all $\xi \in W^0_r, g\in G, a \in A_g$. Also, $\X\boxtimes_{\rho_{\pi}} \Y = (W_r)_{r\in G}$, where each $W_r$ is obtained as the Hausdorff completion of $W^0_r$.
   
   Let $U^0_r: (C_c(\X)\otimes_{\widetilde\pi} Y) \times \{r\} \to W^0_r$ be the linear map satisfying that
$$[U^0_r(\eta \dot\otimes y, r)](h) = \eta(h) \dot\otimes (y,h^{-1}r)$$
for all $\eta\in C_c(\X)$, $y\in Y$ and $h\in H$.  Then it can be checked that $U_r^0$ induces a map  $U_r$ from $((\X\rtimes \B)\otimes_{\widetilde\pi} Y) \times \{r\}$ to $W_r$, such that $U:=(U_r)_{r\in G}$ is a unitary between $\big(((\X \rtimes \B)\otimes_{\widetilde\pi} Y) \times \{r\}\big)_{r \in G}$ and $\X\boxtimes_{\rho_{\pi}} \Y$
 intertwining $\rho_{{\rm Ind}_\rho(\pi)}$ and $\rho\dot\boxtimes\rho_\pi$.
In order to see this, the following steps suffice up to some routine arguments:

For all $\eta, \eta' \in C_c(\X)$, $y, y' \in Y$, $r,r' \in G$, we have
\begin{align*} \langle U_r^0(\eta \dot\otimes y, r),  U_{r'}^0(\eta'\dot\otimes y', r')\rangle_\C &=  
\sum_{h, h'\in G}  \langle \eta(h) \dot\otimes (y,h^{-1}r),  \eta'(h') \dot\otimes (y',h'^{-1}r') \big\rangle_\C \\
&=  \sum_{h, h'\in G} \langle (y,h^{-1}r), \rho_\pi(\langle\eta(h),\eta'(h')\rangle_\B) (y',h'^{-1}r') \big\rangle_\C \\
&= \sum_{h, h'\in G} \langle (y,h^{-1}r), \pi_{h^{-1}h'}(\langle\eta(h),\eta'(h')\rangle_\B)y',h^{-1}r') \big\rangle_\C \\
&= \sum_{h, h'\in G} \big( \langle y, \pi_{h^{-1}h'}(\langle\eta(h),\eta'(h')\rangle_\B)y'\rangle_C, r^{-1}r'\big)\\
&= \sum_{h, h'\in G} \big( \langle y, \pi_{h}(\langle\eta(h'),\eta'(h'h)\rangle_\B)y'\rangle_C, r^{-1}r'\big)\\
&= \sum_{h\in G}\Big(\big\langle y, \big[\pi_h(\langle\eta, \eta'\rangle_{C_c(\B)}(h))\big]y' \big\rangle_C, r^{-1}r'\Big)\\
&= \Big(\big\langle y, \tilde\pi(\langle\eta, \eta'\rangle_{C_c(\B})y' \big\rangle_C, r^{-1}r'\Big)\\
&= \langle (\eta \dot\otimes y, r),  (\eta'\dot\otimes y', r')\rangle_\C. 
\end{align*}
Moreover, for all $\eta \in C_c(\X)$, $y \in Y$, $r,g, h \in G$,  $c\in C$ and $a \in A_g$, we have
\begin{align*} 
\big[\big(U_r^0(\eta \dot\otimes y, r)\big) \cdot (c,g)\big](h) & = \big(\big[U_r^0(\eta \dot\otimes y, r)\big](h)\big)\cdot (c,g)
= \big( \eta(h) \dot\otimes (y,h^{-1}r) \big)\cdot (c,g)\\ 
& = \eta(h) \dot\otimes (yc,h^{-1}rg) 
 = \big[U_{rg}^0 \big( \eta \dot\otimes yc, rg \big)\big](h) \\
& = \big[ U_{rg}^0 \big( (\eta \dot\otimes y, r)\cdot (c,g) \big)\big](h),
\end{align*}
and
\begin{align*}
\Big[(\rho\dot\boxtimes\rho_\pi(a)) U_r^0(\eta \dot\otimes y, r)\Big](h) 
& = (\rho(a) \otimes \iota) [U_r^0(\eta \dot\otimes y, r)](g^{-1}h) \\
& = (\rho(a) \otimes \iota) \big( \eta(g^{-1}h) \dot\otimes (y, h^{-1}gr)\big)\\
& =   \big( \rho(a)\eta(g^{-1}h) \big) \dot\otimes (y, h^{-1}gr) \\
&  = (\Phi_\rho(a \odot g)\eta)(h) \dot\otimes (y, h^{-1}gr) \\
& =U_{gr}^0\big( \Phi_\rho(a \odot g)\eta \dot\otimes y, gr\big)(h) \\
& = U_{gr}^0\big(\rho_{{\rm Ind}_\rho(\pi)}(a)(\eta \dot\otimes y, r)\big)(h).
\end{align*}
The surjectivity of $U_r$ follows from the fact that $U^0_r$ is surjective, which is almost immediate as $U^0_r$ is linear and 
$W_r^0$ is spanned by $\{x\dot\otimes (y, k^{-1}r):  k\in G, x \in X_k, y\in Y\}$.
\end{proof}

\subsection{Induction of representations II}
Assume  $\A=(A_g)_{g\in G}$ is a Fell bundle over $G$,  $\pi = (\pi_g)_{g\in G}$ is a
$*$-representation of $\A $ on a  Hilbert $B$-module $X$,
 and $\pi'$ is a $*$-representation  of $B$ on a Hilbert $C$-module $Y$, 
 where $B$ and $C$ are $C^*$-algebras.  
 We will also denote by $\pi'$ the associated $*$-representation $(\pi'_g)_{g\in G}$  of 
   the trivial Fell bundle $\B = (B \times {g})_{g \in G}$ 
   on $Y$, given by $\pi'_g((b, g))y = \pi'(b)y$ for all $g\in G$, $b\in B$ and $y\in Y$.  
  
  Then, as in the previous subsection, we can form 
the $\A$-action $\rho_\pi$ associated to $\pi$ on the canonical Hilbert $\B$-bundle $\X := (X\times \{g\})_{g\in G}$.
Further, setting $\C:= (C\times \{g\})_{g\in G}$, we can also form  
 the $\B$-action $\rho_{\pi'}$ on the canonical Hilbert $\C$-bundle  $\Y:= (Y\times \{g\})_{g\in G}$, which is given by 
 \[  [\rho_{\pi'}(b, g)](y, h) =(\pi'_g(b,g)y, gh) =  (\pi'(b)y, gh)\]
for all $b\in B$, $g, h\in G$ and $y\in Y$.

Finally, we consider the induced $*$-representation 
$\pi''=(\pi''_g)_{g\in G}$ of $\A$ on the Hilbert $C$-module $X\otimes_{\pi'} Y$, determined by
\[ \pi''_g(a)(x\dot\otimes y) = (\pi_g(a)x) \dot\otimes y\]
for all $g \in G$, $a\in A_g, x \in X $ and $y\in Y$. Then we have:

\begin{proposition}\label{rhopitensrhopi'}
The $\A$-action $\rho_\pi \dot\boxtimes\rho_{\pi'}$ on the Hilbert $\C$-bundle $\X\boxtimes_{\rho_{\pi'}}\Y$ is unitary equivalent to the $\A$-action $\rho_{\pi''}$ associated to $\pi''$ on the Hilbert $\C$-bundle $((X\otimes_{\pi'} Y) \times \{g\})_{g\in G}$.
\end{proposition}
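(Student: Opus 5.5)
The introduction says this proposition is deduced from Theorem \ref{Ind-prop}, so that is the route I will take. Apply Theorem \ref{Ind-prop} with the Fell bundle $\A$, the trivial Fell bundle $\B=(B\times\{g\})_{g\in G}$, the $\A$-action $\rho:=\rho_\pi$ on $\X=(X\times\{g\})_{g\in G}$, and the $*$-representation $\pi'=(\pi'_g)_{g\in G}$ of $\B$ on $Y$. Then $\rho_{\pi'}$ is exactly the $\B$-action of the previous subsection. Theorem \ref{Ind-prop} therefore says that $\rho_\pi\dot\boxtimes\rho_{\pi'}$ on $\X\boxtimes_{\rho_{\pi'}}\Y$ is unitarily equivalent to $\rho_{{\rm Ind}_{\rho_\pi}(\pi')}$ on $\big(((\X\rtimes\B)\otimes_{\widetilde{\pi'}}Y)\times\{g\}\big)_{g\in G}$. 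It remains to show that this action is unitarily equivalent to $\rho_{\pi''}$. For that it suffices to produce a unitary $W:(\X\rtimes\B)\otimes_{\widetilde{\pi'}}Y\to X\otimes_{\pi'}Y$ of Hilbert $C$-modules with $W\circ{\rm Ind}_{\rho_\pi}(\pi')_g(a)=\pi''_g(a)\circ W$ for all $g$ and $a\in A_g$. Indeed, $(z,g)\mapsto(Wz,g)$ is then a unitary of the associated Hilbert $\C$-bundles, since the operations there depend only on the module structure, and it intertwines the associated actions.

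To build $W$, I define it on $C_c(\X)\odot Y$, writing $\eta\in C_c(\X)$ as $\eta(h)=(\eta_h,h)$ with $\eta_h\in X$, by
\[W_0(\eta\otimes y)=\sum_{h\in G}\eta_h\dot\otimes \pi'(\text{?})y .\]
The twist in the second factor must be chosen so that inner products match. Computing with the inner product on $C_c(\X)$ gives
\[\langle\eta,\eta'\rangle_{C_c(\B)}(h)=\sum_{h'}\big(\langle\eta_{h'},\eta'_{h'h}\rangle_B,h\big),\]
and $\widetilde{\pi'}$ sends $(b,h)\odot h$ to $\pi'(b)$. So
\[\langle\eta\otimes y,\eta'\otimes y'\rangle=\sum_{h',h''}\langle y,\pi'(\langle\eta_{h'},\eta'_{h''}\rangle_B)y'\rangle_C,\]
and this is exactly $\big\langle\sum_h\eta_h\dot\otimes y,\sum_h\eta'_h\dot\otimes y'\big\rangle_C$. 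Hence no twist is needed: I take $W_0(\eta\otimes y)=\big(\sum_h\eta_h\big)\dot\otimes y$. Since $W_0$ preserves inner products, it is well defined on the balanced tensor product and extends to an isometry $W$. Surjectivity is immediate from elementary tensors: if $x\in X$ and $\delta_x:=(x,e)$ is supported at $e$, then $W(\delta_x\otimes y)=x\dot\otimes y$. The isometry therefore has dense range and hence is unitary.

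For the intertwining relation, I recall that $\Phi_{\rho_\pi}(a\odot g)\eta$ is the section $h\mapsto(\pi_g(a)\eta_{g^{-1}h},h)$. Summing over $h$ gives $\pi_g(a)\sum_h\eta_h$, so $W\big((\Phi_{\rho_\pi}(a\odot g)\eta)\dot\otimes y\big)=\pi''_g(a)W(\eta\dot\otimes y)$. I expect the main obstacle to be the inner-product computation: in the trivial bundle the indices must cancel correctly, which they do because $\alpha$ is trivial. Once that is checked, the argument finishes by composing the two unitary equivalences.
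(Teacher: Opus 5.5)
Your proposal is correct and follows the paper's route. You reduce via Theorem \ref{Ind-prop} to showing ${\rm Ind}_{\rho_\pi}(\pi')\cong\pi''$, and your map $\eta\dot\otimes y\mapsto\big(\sum_h\eta_h\big)\dot\otimes y$ is exactly the paper's unitary $V$ under the identification $\X\rtimes\B\cong X\otimes_{\rm max}C^*(G)$. The only difference is that you verify isometry by a direct inner-product computation on $C_c(\X)\odot Y$, where the paper passes through that identification and the relation $(x\otimes h)\dot\otimes y=(x\otimes e)\dot\otimes y$; this makes explicit a step the paper only asserts.
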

\begin{proof} Using Theorem \ref{Ind-prop} we get that $\rho_\pi \dot\boxtimes\rho_{\pi'}$ is unitary equivalent to the $\A$-action $\rho_{{\rm Ind}_{\rho_{\pi}}(\pi')}$, which acts  on the Hilbert $\C$-bundle $\big(((\X \rtimes \B)\otimes_{\widetilde{\pi'}} Y) \times \{g\}\big)_{g \in G}$. So it suffices to show that $\rho_{\pi''}$ is unitarily equivalent to $\rho_{{\rm Ind}_{\rho_{\pi}}(\pi')}$, i.e., that the $*$-representation $\pi''$ of $\A$ on the Hilbert $C$-module $X\otimes_{\pi'} Y$ is unitarily equivalent to  the $*$-representation ${\rm Ind}_{\rho_{\pi}}(\pi')$ of $\A$ on the Hilbert $C$-module $(\X \rtimes \B)\otimes_{\widetilde{\pi'}} Y$. 

Let us identify $C^*(\B)$ with $B\otimes_{\rm max} C^*(G)$, 
(cf. \cite[Example 4.15]{BeCo8}). Then one checks that $\X \rtimes \B$ can be identified with the $B\otimes_{\rm max} C^*(G)$-module $X\otimes_{\rm max} C^*(G)$ obtained by completing the natural $ B\odot C_c(G)$-inner product module $X\odot C_c(G)$, whose right action and inner-product are determined by 
\[ (x\otimes h)\cdot (b \otimes h') = xb \otimes hh', \]  
\[\langle x\otimes h, x'\otimes h'\rangle_{B\odot C_c(G)} = \langle x, x'\rangle_B \otimes h^{-1}h'\]
for all $x,x'\in X$, $b \in G$ and $h,h'\in G$, 
where we consider $G$ as canonically embedded in $C^*(G)$.

Moreover, $\widetilde{\pi'}$ corresponds then to the $*$-representation of $B\otimes_{\rm max} C^*(G)$ on $Y$ determined by 
\[\widetilde{\pi'}(b\otimes h)y = \pi'(b)y \]
for all $b\in B$ and $ h\in G$. 
Now one can readily verify that in $(X\otimes_{\rm max} C^*(G))\otimes_{\widetilde{\pi'}} Y$, we have $(x\otimes h) \dot\otimes y = (x\otimes e) \dot\otimes y$ for all $x \in X, h\in G$ and $y\in Y$. Hence $(X\otimes_{\rm max} C^*(G))\otimes_{\widetilde{\pi'}} Y$ can be identified with  $X\otimes_{\pi'} Y$ via the map $V: (X\otimes_{\rm max} C^*(G))\otimes_{\widetilde{\pi'}} Y \to X\otimes_{\pi'} Y$ determined by 
\[ V\big((x\otimes e) \dot\otimes y\big) = x\dot\otimes y\] for all $x \in X$ and $y\in Y$. 

Let $g\in G, a\in A_g$. Under the identification of 
$(X\rtimes \B)\dot\otimes_{\widetilde{\pi'}} Y$ 
with $(X\otimes_{\rm max} C^*(G))\otimes_{\widetilde{\pi'}} Y$, we get that 
\[ [({\rm Ind}_{\rho_{\pi}}(\pi'))(a)]((x\otimes h)\dot\otimes y) = ((\pi_g(a)x) \otimes gh)\dot\otimes y\]
for all $x\in X, h\in G$ and $y\in Y$, hence that
\[ [({\rm Ind}_{\rho_{\pi}}(\pi'))(a)]((x\otimes e)\dot\otimes y) = ((\pi_g(a)x) \otimes g)\dot\otimes y = ((\pi_g(a)x) \otimes e)\dot\otimes y\]
Thus, 
\begin{align*} 
(\pi''(a)V)((x\otimes e) \dot\otimes y) &=  \pi''(a)(x\dot\otimes y) = (\pi_g(a)x) \dot\otimes y
= V\big(((\pi_g(a)x)\otimes e) \dot\otimes y\big) \\
&=\big(V[{\rm Ind}_{\rho_{\pi}}(\pi')](a)\big)((x\otimes e) \dot\otimes y)
\end{align*}
Hence, $V$ is a unitary intertwining $\pi''$ and ${\rm Ind}_{\rho_{\pi}}(\pi')$, as desired.
\end{proof}

\subsection{Induction of actions via conditional expectations} \label{RieffelInd}
The tensor product construction may also be used to produce a Rieffel-like induction of actions of Fell bundles via conditional expectations.  For simplicity, we assume $H=G$ and $\varphi={\rm id}_G$.  

Let us first recall that if $B$ is a $C^*$-subalgebra of a $C^*$-algebra $A$ such that there exists a conditional expectation $E$ from $A$ to $B$, then, following Rieffel \cite{Rie74},  one can construct  an $A$-$B$ correspondence $A_E$ naturally arising from the right $B$-module $A$ equipped with the semi-inner product  $\langle a,a'\rangle_B = E(a^*a')$.
This $C^*$-correspondence can then be used to induce a $*$-representation ${\rm Ind}_E(\pi)$ of $A$ on $A_E\otimes_\pi Y$ from a $*$-representation $\pi$ of $B$ on a Hilbert $C^*$-module $Y$.

Let us consider a Fell bundle $\A$ over $G$ having  a sub-Fell bundle $\B$ such that there exists a conditional expectation $\E : \A \to \B$, cf.~\cite[Definition 21.19]{Exel2}. Let $\rho_\E$ be the $\A$-action on the Hilbert $\B$-bundle $\A_\E$ 
(cf.~\cite[Example 4.18]{BeCo8}) and let $\A_\E\rtimes\B$ be the $C^*(\A)$-$C^*(\B)$ correspondence associated with $\rho_\E$. We note that if $F_\E: C^*(\A) \to C^*(\B)$ is the conditional expectation associated to $\E$ (cf.~\cite[Theorem  21.30]{Exel2} and \cite[Example 4.7]{BeCo7}), then it is not difficult to verify that the $C^*(\A)$-$C^*(\B)$ correspondence $C^*(\A)_{F_\E} $ is isomorphic to $\A_\E\rtimes\B$.  
 
Assume further that $\psi \in {\rm Hom}(G, K)$ for some group $K$ and $\sigma$ is a $(\B, \psi)$-action on a Hilbert $\C$-bundle $\Y$ for some Fell bundle $\C$ over $K$. 
\begin{definition}
The \emph{$(\A, \psi)$-action ${\rm Ind}_\E(\sigma)$ on the Hilbert $\C$-bundle $\A_\E \boxtimes_\sigma \Y$ induced by $\sigma$ via $\E$} 
is given by \[{\rm Ind}_\E(\sigma):= \rho_\E \dot\boxtimes \sigma.\] 
\end{definition}
If $K=G$ and $\psi = {\rm id}_G$, we just call ${\rm Ind}_\E(\sigma)$ the $\A$-action induced by $\sigma$ via $\E$.
This notion is a generalization of Rieffel's induction \cite{Rie74}. Indeed,  when $G$ and $K$ are trivial, we may identify $\A$, $\B$ and $\C$ with $C^*$-algebras $A$, $B$, $C$, respectively. Then $\E$ is a conditional expectation $E$ from $A$ to $B$, $\A_\E$ is the natural Hilbert $B$-module $A_E$ obtained from the $B$-module $A$ equipped with the semi-inner product  $\langle a,a'\rangle_B = E(a^*a')$, and $\rho_\E(a)$ arises from left multiplication with $a$. Moreover, $\sigma$ is a $*$-representation $\pi$ of $B$ on a Hilbert $C$-module $Y$, cf.~Example \ref{G-trivial}. It is then clear that  
${\rm Ind}_\E(\sigma)$ is exactly Rieffel's induced representation ${\rm Ind}_E(\pi)$ of $A$ on the Hilbert $C$-module $A_E\otimes_\pi Y$.

Going back to the general situation, let $\Y\rtimes\C$ be the $C^*(\B)$-$C^*(\C)$ correspondence associated to $\sigma$ and  $\Phi_\sigma$ denote the left action of $C^*(\B)$ on $\Y \rtimes \C$. Further, let $(\A_\E\boxtimes_\sigma\Y)\rtimes{\C}$ be the $C^*(\A)$-$C^*(\C)$ correspondence associated to the 
 $(\A,\psi)$-action ${\rm Ind}_\E(\sigma)$,  and $\Phi_{{\rm Ind}_\E(\sigma)}$ denote  the left action of $C^*(\A)$ on  $(\A_\E\boxtimes_\sigma\Y)\rtimes{\C}$.
Then Theorem \ref{corr-morph}  gives 
 that there is a morphism \[\Xi:(\A_\E\rtimes\B)\otimes_{C^*(\B)} (\Y\rtimes\C) \to (\A_\E\boxtimes_\sigma\Y)\rtimes{\C}\] of  $C^*$-correspondences.
 Now, identifying $(\A_\E\rtimes\B)\otimes_{C^*(\B)} (\Y\rtimes\C)$ with  
 $C^*(\A)_{F_\E} \otimes_{\Phi_\sigma} (\Y \rtimes \C)$ (using our observation above), the left action of $C^*(\A)$ on  $(\A_\E\rtimes\B)\otimes_{C^*(\B)} (\Y\rtimes\C)$ is given by  ${\rm Ind}_{F_\E}(\Phi_\sigma)$.Thus we conclude that 
$\Xi$ intertwines  ${\rm Ind}_{F_\E}(\Phi_\sigma)$ with $\Phi_{{\rm Ind}_\E(\sigma)}$. 

The above procedure can be used to induce actions from subgroups.  Assume $\A$ is a Fell bundle over $G$, $H$ is a subgroup of $G$, and $\iota_H:H\to G$ denotes the canonical embedding of $H$ in $G$. Let $\A_H$ denote the Fell bundle over $H$ obtained from $\A$ by restriction, and 
 $\B$ denote the Fell bundle over $G$ obtained from $\A_H$ by trivial extension, i.e., $B_g= A_g$ if $g \in H$ and
$B_g= 0$ otherwise.
  Let then $\E=(E_g)_{g\in G}$ be the conditional expectation from $\A$ to $\B$ given by $E_g= {\rm id}_{A_g}$ if $g\in H$ and $E_g= 0$ otherwise. 

Assume now $\tau$ is an $(\A_H, \iota_H)$-action on a Hilbert $\C$-bundle $\Y$ for some Fell bundle $\C$ over $G$.
Denote by  $\widetilde\tau$  the trivial extension of $\tau$ to a $\B$-action on $\Y$, given for all $y\in \Y$ by
\[ \widetilde\tau(b)y=\begin{cases} \tau(b)y \quad & \text{ if $g\in H$ and  $b\in B_g$},\\ \, \, 0 \quad &\text{ if $g\in G\setminus H$ and  $b\in B_g$}.\end{cases}\]
  \begin{definition}
  The  \emph{$\A$-action induced by $\tau$ from $H$ to $G$} is 
 the $\A$-action on  the Hilbert $\C$-bundle $\A_\E \boxtimes_{\widetilde\tau} \Y$ given by \[{\rm Ind}_H^G(\tau):={\rm Ind}_\E(\widetilde\tau).\] 
 \end{definition}

As a concrete example, let  us consider  the group bundle $\A_G$ over $G$, so $(\A_G)_H=\A_H$ is the group bundle over $H$,  and let $\pi$ be a unitary representation of $H$ on a Hilbert space $Y$.

Let then $\Y=(Y_g)_{g\in G}$ be the trivial Hilbert $\A_G$-bundle over $G$ associated to $Y$, that is,  $Y_g:=Y\times \{g\}$ for all $g\in G$, $(y, g)\cdot (z, g')= (zy, gg')$ and $\langle (y, g), (y', g')\rangle_{\A_G} = (\langle y, y'\rangle, g^{-1}g')$ for all $y, y'\in Y$, $z\in \mathbb{C}$ and $g, g'\in G$.
Then we get an $(\A_H, \iota_H)$-action $\tau_\pi$ on the Hilbert $\A_G$-bundle $\Y$, defined by
\[ [\tau_\pi(z, h)](y, g) = (z\pi_h(y), hg)\]
for all  $(z, h) \in \mathbb{C}\times H$ and $(y, g) \in Y\times G$.  
Letting  $\B$ be  the Fell bundle over $G$ obtained from $\A_H$ by trivial extension as above,  and $\widetilde{\tau_\pi}$  be the trivial extension of $\tau_\pi$ to a $\B$-action on $\Y$,   we can form the $\A_G$-action ${\rm Ind}_H^G({\tau_\pi})$  induced by $\tau_\pi$ from $H$ to $G$, given by  \[{\rm Ind}_H^G({\tau_\pi})= {\rm Ind}_\E(\widetilde{\tau_\pi}) = \rho_\E \dot\boxtimes \widetilde{\tau_\pi}.\]
Here $\E$ is the obvious conditional expectation from $\A_G$ to $\B$, and $\rho_\E$ is the associated $\A_G$-action on the Hilbert $\B$-bundle $(\A_G)_\E$ (cf.~\cite[Examples 3.7 and 4.18]{BeCo8}). 
In the present situation, $(\A_G)_\E$ is the natural $\B$-bundle $ (\mathbb{C}\times\{g\})_{g\in G}$, with inner product given by 
\[ \langle (z, g), (z', g')\rangle_\B = E_{g^{-1}g'} \big(  (z, g)^* (z', g')\big) = E_{g^{-1}g'} (\bar z z', g^{-1} g')= \begin{cases} (\bar z z', g^{-1} g') & \text{ if }  g^{-1}g' \in H,\\ 0 &\text{ otherwise}\end{cases}\] 
for all $(z, g), (z', g') \in (\A_G)_\E$. 
(Since the norm of an element $(z, g)\in (\A_G)_\E$ is given by $\|(z,g)\| =  \|\langle (z, g), (z, g)\rangle_\B\|^{1/2} = \| (|z|^2, e)\|^{1/2} = |z|$, no completion is needed here.) Moreover, if $(z, g) \in \A_G$ and $(z', g')\in (\A_G)_\E$, then $[\rho_\E(z,g)](z',g') = (zz', gg')$.

\smallskip The $\A_G$-action ${\rm Ind}_H^G({\tau_\pi})= \rho_\E \dot\boxtimes \widetilde{\tau_\pi}$ acts on the Hilbert $\A_G$-bundle $(\A_G)_\E \boxtimes_{\widetilde{\tau_\pi}} \Y = (W_r)_{r\in G}$, where $W_r$ is the closure of the span of  $\{(z', k)\dot\otimes (y, k^{-1}r) :  z'\in \mathbb{C}, k\in G, y\in Y\}$ for each $r\in G$, by
\[(\rho_\E \dot\boxtimes \widetilde{\tau_\pi})(z, g)((z', k)\dot\otimes (y, k^{-1}r)) = \big((\rho_\E(z,g))(z', k)\big) \dot\otimes (y, k^{-1}r) =  (zz', gk)\dot\otimes (y, k^{-1}r)  \]
for all  $z, z'\in \mathbb{C}, g, k, r\in G$ and  $y\in Y$. Then we have:

\begin{proposition} Let ${\rm Ind}_H^G(\pi)$ denote the unitary representation induced by $\pi$ from $H$ to $G$. Then the $\A_G$-action ${\rm Ind}_H^G({\tau_\pi})$ is unitary equivalent to the  $\A_G$-action associated with ${\rm Ind}_H^G(\pi)$.
\end{proposition}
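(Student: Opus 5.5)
We build an explicit unitary from the bundle $\mathcal{W}=(W_r)_{r\in G}$ to the trivial Hilbert $\A_G$-bundle $\big(\mathcal{K}\times\{r\}\big)_{r\in G}$ associated with $\mathcal{K}$, the space of ${\rm Ind}_H^G(\pi)$. We use the standard model
\[\mathcal{K}=\{F:G\to Y \mid F(gh)=\pi_h^{-1}F(g)\ \forall h\in H,\ \textstyle\sum_{gH\in G/H}\|F(g)\|^2<\infty\},\qquad ({\rm Ind}_H^G(\pi)_g F)(g')=F(g^{-1}g').\]
For $k\in G$ and $y\in Y$, let $F_{k,y}\in\mathcal{K}$ be the function supported on $kH$ with $F_{k,y}(kh)=\pi_h^{-1}y$. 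The associated $\A_G$-action is $\rho(z,g)(F,r)=(z\,{\rm Ind}(\pi)_gF, gr)$, and one checks $\rho(z,g)(F_{k,y},r)=(zF_{gk,y},gr)$.

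We then define $U^0_r$ on the spanning elements of $W^0_r$ by
\[U^0_r\big((z',k)\dot\otimes(y,k^{-1}r)\big):=(z'F_{k,y},r),\]
extended linearly. The first step is to compare inner products. In $\mathcal{W}$,
\[\big\langle (z',k)\dot\otimes(y,k^{-1}r),(z'',k')\dot\otimes(y',k'^{-1}r')\big\rangle=\big\langle (y,k^{-1}r),\widetilde{\tau_\pi}(\langle (z',k),(z'',k')\rangle_\B)(y',k'^{-1}r')\big\rangle .\]
This vanishes unless $h:=k^{-1}k'\in H$, in which case it equals $(\bar z'z''\langle y,\pi_h y'\rangle,r^{-1}r')$. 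On the other side, $\langle F_{k,y},F_{k',y'}\rangle$ is nonzero only if $kH=k'H$, and then it equals $\langle y,\pi_h y'\rangle$, because $F_{k',y'}(k)=F_{k',y'}(k'h^{-1})=\pi_h y'$. The two sides agree.

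From this identity, $U^0_r$ is well defined on the algebraic span: it kills null vectors, since it preserves the semi-inner product, and it respects the relations of the balanced tensor product. It therefore extends to isometries $U_r:W_r\to\mathcal{K}\times\{r\}$ satisfying $\langle U_r w,U_{r'}w'\rangle=\langle w,w'\rangle$.

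The compatibility checks are direct computations on generators. For the right action, $(y,k^{-1}r)\cdot(c,g)=(cy,k^{-1}rg)$, which maps to $(cF_{k,y},rg)=(F_{k,y},r)\cdot(c,g)$. For intertwining, the displayed formula for ${\rm Ind}_H^G(\tau_\pi)$ sends the generator to $(zz',gk)\dot\otimes(y,k^{-1}r)$, which we rewrite as $(zz',gk)\dot\otimes(y,(gk)^{-1}gr)$. This maps to $(zz'F_{gk,y},gr)=\rho(z,g)(z'F_{k,y},r)$, as required.

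For surjectivity, the functions $F_{k,y}$ with $k$ ranging over coset representatives and $y\in Y$ span a dense subspace of $\mathcal{K}$. Since $U_r$ is isometric, it has closed range, so it is onto. Hence $U=(U_r)_{r\in G}$ is a unitary intertwiner in the sense of Definition~\ref{unitequiv}.

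The main obstacle is the bookkeeping in the inner product computation. One must track the fibre indices under $\widetilde{\tau_\pi}$ and the trivial extension $\E$ correctly, so that the factor $\langle y,\pi_h y'\rangle$ matches the covariance convention chosen for $\mathcal{K}$. If the signs do not match, we switch to the convention $F(gh)=\pi_h^{-1}F(g)$ versus $F(hg)$, or to $F_{k,y}(kh)=\pi_h y$, whichever makes the identity hold.
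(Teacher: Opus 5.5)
Your proof is correct. Its skeleton matches the paper's: a fibrewise map defined on the generators $(z',k)\dot\otimes(y,k^{-1}r)$, followed by checks of the inner products, the right $\A_G$-action, the intertwining relation and surjectivity. The difference is the model you use for ${\rm Ind}_H^G(\pi)$.

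The paper uses the Rieffel/Raeburn--Williams model $\widetilde Y=X\otimes_{\tilde\pi}Y$, where $X$ is the Hilbert $C^*(H)$-module completion of $C_c(G)$. It maps from the induced-representation bundle into $\mathcal W$ by $V_r((f+N)\dot\otimes y,r)=\sum_k (f(k),k)\dot\otimes(y,k^{-1}r)$. The inner-product check then reduces exactly to $\langle y,\tilde\pi(\langle f,f'\rangle_{C_c(H)})y'\rangle$, and surjectivity is immediate because $(z'\odot k+N)\dot\otimes y$ hits each generator.

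You use Mackey's space $\mathcal K$ of $H$-covariant $\ell^2$-sections and map in the opposite direction. Your $U_r$ equals $V_r^{-1}$ composed with the standard unitary $(f+N)\dot\otimes y\mapsto\sum_k f(k)F_{k,y}$ from $\widetilde Y$ onto $\mathcal K$.

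Your route is more classical, and the orthogonal decomposition of $\mathcal K$ over $G/H$ makes surjectivity transparent. It does tacitly rely on the function model realizing the same induced representation as the paper's model. That fact is standard, and it is harmless here, since unitarily equivalent representations give unitarily equivalent $\A_G$-actions. The paper's route avoids choosing a covariance convention and stays inside the Rieffel-induction framework used throughout that section.

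Finally, your computation $\langle F_{k,y},F_{k',y'}\rangle=\langle y,\pi_{k^{-1}k'}y'\rangle$ is correct with the convention $F(gh)=\pi_h^{-1}F(g)$ you fixed. So the hedging in your last paragraph about switching conventions is unnecessary.
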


\begin{proof} Let us first describe the Hilbert space $\widetilde Y$ on which ${\rm Ind}_H^G(\pi)$ acts. We follow here Raeburn and Williams \cite{RW}. Let $\tilde\pi$ denote the integrated form of $\pi$, so $\tilde\pi$ is a representation of $C^*(H)$ on $Y$. Then $\widetilde Y := X \otimes_{\tilde\pi} Y$, where $X$ is the Hilbert $C^*(H)$-module obtained as the Hausdorff completion of the $C_c(H)$-module $C_c(G)$, the  right action of $C_c(H)$ and the semi-inner product on $C_c(G)$ with values in $C_c(H)$ being given by 
\[ (f\cdot b)(g) = \sum_{h\in H} f(gh^{-1})b(h), \quad \langle f, f'\rangle_{C_c(H)}(s) = \sum_{k\in G} \overline{f(k)}f'(ks)\]
 for all $f, f'\in C_c(G), b\in C_c(H)$, $g\in G$ and $s\in H$. Setting $N:=\{ f\in C_c(G): \langle f, f\rangle_{C_c(H)}=0\}$, the quotient space $C_c(G)/N$ is then a dense subspace of $X$, by construction.
 
 Now $G$ acts on $C_c(G)$ by left translation, say $g\cdot f=  f_g$ for $g\in G$ and $f\in C_c(G)$. It can be shown, cf.~\cite[Section 2.4 and Appendix C]{RW}, that one gets  a unitary representation $u$ of $G$ on $X$, determined by $u_g(f+N) =f_g + N$ for all $g\in G$ and $ f\in C_c(G)$. Then ${\rm Ind}_H^G(\pi)$  is  the unitary representation of $G$ on $\widetilde Y=X \otimes_{\tilde\pi} Y$ determined by
 \[ [{\rm Ind}_H^G(\pi)(g)]((f+N)\dot\otimes y) = u_g(f+N) \dot \otimes y = (f_g + N) \dot\otimes y\]
 for all $g\in G$, $f \in C_c(G)$ and $y \in Y$.  
 
 The $\A_G$-action $\sigma$ associated with   ${\rm Ind}_H^G(\pi)$ acts on the trivial Hilbert $\A_G$-bundle $(\widetilde Y \times \{r\})_{r\in G}$ associated to $\widetilde Y$ and is determined by 
 \[ (\sigma(z,g)) ((f+N)\dot\otimes y, r) =  (z({\rm Ind}_H^G(\pi)(g))(f+N)\dot\otimes y , gr) = (( zf_g + N)\dot\otimes y, gr)\]
 for all $z\in \mathbb{C}, g, r\in G, f\in C_c(G)$ and $y\in Y$, hence by
 \[  (\sigma(z,g)) ((z'\odot k+N)\dot\otimes y, r) = (((zz'\odot gk) + N)\dot\otimes y), gr)\]
  for all $z, z'\in \mathbb{C}, g, k, r\in G$,  and $y\in Y$. 
  
  We want to prove that the $\A_G$-actions ${\rm Ind}_H^G(\tau_\pi)$ and $\sigma$ are unitarily equivalent.\\
Let  $V=(V_r)_{r\in G}$ be the unitary from  $(\widetilde Y \times \{r\})_{r\in G}$ to  $(W_r)_{r\in G}$ determined by
  \[ V_r ((f+N)\dot\otimes y, r) =  \sum_{k\in G} (f(k), k)\dot\otimes (y, k^{-1} r). \]
  
  Note here that for all $r, r'\in G$, $f, f'\in C_c(G)$ and $y, y'\in Y$, we have
  \begin{align*}
  \big\langle V_r ((f+N)\dot\otimes y, r) , & V_{r'} ((f'+N)\dot\otimes y', r') \big\rangle_{\A_G} = \sum_{k,k'\in G} \langle (f(k), k)\dot\otimes (y, k^{-1} r) , (f'(k'), k')\dot\otimes (y', k'^{-1} r') \rangle_{\A_G}\\
  & = \sum_{k,k'\in G} \langle (y, k^{-1} r), \widetilde{\tau_\pi} \big( \langle (f(k), k), (f'(k'), k')\rangle_\B \big) (y', k'^{-1} r') \rangle_{\A_G} \\
  & =  \sum_{k,k'\in G, \,k^{-1}k' \in H}  \langle (y, k^{-1} r), (\overline{f(k)}f'(k')\pi_{k^{-1}k'}y, k^{-1}r') \rangle_{\A_G} \\
  & =  \sum_{k\in G, \,s \in H}  \langle (y, k^{-1} r), (\overline{f(k)}f'(ks)\pi_{s}y', k^{-1}r') \rangle_{\A_G}\\
  & = \Big( \big\langle y, \sum_{s \in H} \sum_{k \in G} \overline{f(k)} f'(ks) \pi_s y' \big\rangle, r^{-1}r'\Big) \\
  & = \Big( \big\langle y, \tilde{\pi}(\langle f, f' \rangle_{C_c(H)})y' \big\rangle, r^{-1}r'\Big) \\
 & = \big\langle ((f+N)\dot\otimes y, r) , ((f'+N)\dot\otimes y', r') \big\rangle_{\A_G}.
  \end{align*}
  It follows that each $V_r$ is well-defined and isometric on 
  ${\rm span} \{((f+N)\dot\otimes y, r): f\in C_c(G), y\in Y\}$, which is a dense subspace of $\widetilde Y \times \{r\}$.
   Thus each $V_r$ extends to an isometry, also denoted by $V_r$, from $\widetilde Y \times \{r\}$ to $W_r$. As
each $V_r$ is easily seen to be surjective, we get that $V=(V_r)_{r\in G}$ is unitary and satisfies
\[  \big\langle V_r (\tilde y, r) ,  V_{r'} (\tilde{y}', r') \big\rangle_{\A_G} = \big\langle (\tilde y, r) , (\tilde{y}', r') \big\rangle_{\A_G}\]
for all $ r, r'\in G$ and $\tilde y, \tilde y' \in \widetilde Y$.
   Moreover, we have
  \begin{align*}
  V_{rg}\big( ((f+N) \otimes y, r) & \cdot (z,g)\big)  = V_{rg} \big(((f+N) \otimes zy, rg) \big) = \sum_{k\in G} (f(k), k)\dot\otimes (zy, k^{-1} rg) \\
  =  & \sum_{k\in G} (f(k), k)\dot\otimes \big( (y, k^{-1} r) \cdot (z,g) \big) 
  = V_r\big(((f+N) \otimes y, r) \big) \cdot (z,g),  
  \end{align*}
 
 \vspace{-2ex}
  \noindent and
   \begin{align*}
(\rho_\E \dot\boxtimes \widetilde{\tau_\pi})(z, g)  \big(V_r((f+N)\dot\otimes y, r)\big)
&= \sum_{k\in G} (\rho_\E \dot\boxtimes \widetilde{\tau_\pi})(z, g)(f(k), k)\dot\otimes (y, k^{-1} r)\\
&=  \sum_{k\in G} (zf(k), gk)\dot\otimes (y, k^{-1} r)
= \sum_{l\in G} ( zf(g^{-1}l), l)\dot\otimes (y, l^{-1}gr)\\&
= \sum_{l\in G} ( zf_g(l), l)\dot\otimes (y, l^{-1}gr)
= V_{gr} \big((( zf_g + N)\dot\otimes y, gr)\big)\\&
= V_{gr} \big( \sigma(z,g)((f+N)\dot\otimes y, r)\big)  
  \end{align*}
  for all $r, g \in G, f\in C_c(G)$, $y\in Y$ and $z\in \mathbb{C}$. This implies that 
$ V_{rg}\big((\tilde y, r)\cdot (z, g)\big)= \big(V_r(\tilde y, r)\big)\cdot (z, g)$ and 
$(\rho_\E \dot\boxtimes \widetilde{\tau_\pi})(z, g) \big(V_r(\tilde y, r)\big)  = V_{gr}\big(\sigma(z, g)(\tilde y, r)\big)$
for all $r,g \in G$, $\tilde y \in \widetilde Y$ and  $z\in \mathbb{C}$.
   \end{proof}

\section{The Fourier-Stieltjes algebra and the Fourier algebra of a Fell bundle over a group}\label{FS}
Let $\B$ be a Fell bundle over a group $G$. We recall from \cite[Remark 3.17]{BeCo7} that one may 
associate  a Fourier-Stieltjes algebra $B(\B)$ to $\B$ as follows. The set $L(\B)$ of all $\B$-bundle maps becomes an algebra with respect to pointwise operations, i.e.,  
\[(T_g)_{g\in G} + (T'_g)_{g\in G} = (T_g+T'_g)_{g\in G}, \, \lambda\cdot  (T_g)_{g\in G} = (\lambda T_g)_{g\in G} \text{ and } (T_g)_{g\in G} \cdot (T'_g)_{g\in G}= (T_g\circ T'_g)_{g\in G}.\]
 Let $P(\B)$ be the subset of $L(\B)$ consisting of all positive definite $\B$-bundle maps. Then  \[ B(\B) := {\rm Span}(P(\B))\] 
is a subalgebra of $L(\B)$. The fact that $\B(B)$ is closed under multiplication can be seen by using that the set of completely positive maps on a $C^*$-algebra is closed under composition and applying \cite[Corollary 3.18 and Remark 3.19]{BeCo7}.
(When $\B$ is unital this can also be deduced by combining \cite[Theorem 5.1]{BeCo8} with Proposition \ref{comp-pos-def}.) 
Both algebras $L(\B)$ and $B(\B)$  are unital, with unit $I=(I_g)_{g\in G}$, where $I_g:B_g\to B_g$ denotes the identity map
for each $g\in G$. We also note that $P(\B)$ is a cone in $L(\B)$ such that $P(\B)\cap (-P(\B)) = \{0\}$. 
 
When $\B$ is unital, the  Fourier-Stieltjes algebra $B(\B)$ may also be described in terms of coefficient functions of $\B$-actions, similarly to the case of Fourier-Stieltjes algebras associated to unital $C^*$-dynamical systems, cf.~\cite{BeCo6}.  
We first need to introduce some notation.  When  $\rho$ is a $\B$-action on a Hilbert $\B$-bundle $\X$ and  $x, y\in X_e$, we will denote by $T^{\rho, x, y}$ the $\B$-bundle map defined by \[ T_g^{\rho, x, y}(b) = \langle x, \rho(b) y\rangle_\B\]
for all $g\in G$ and $b\in B_g$, and refer to such maps as coefficients of $\rho$ (over the unit fibre). Then we have:

\begin{theorem} \label{FS-coeff} Assume $\B$ is unital.  Then
\[B(\B) =\big\{ T^{\rho, x, y}: \rho \text{ is an action of } \B \text{ on a Hilbert $\B$-bundle $\X$ and } x, y\in X_e\big\}.\]
\end{theorem}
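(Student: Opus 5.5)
We prove the two inclusions separately. For ``$\supseteq$'', let $\rho$ be a $\B$-action on $\X$ and $x,y\in X_e$. We use the polarization identity
\[ T^{\rho,x,y} = \frac14\sum_{k=0}^{3} i^{k}\, T^{\rho,\, x+i^k y,\, x+i^k y}, \]
which holds fibrewise because $y\mapsto \langle x,\rho(b)y\rangle_\B$ is linear and $x\mapsto\langle x,\rho(b)y\rangle_\B$ is conjugate-linear. This follows from condition (iii) of Definition \ref{BC-action-def} together with $\langle x,y\rangle_\B^*=\langle y,x\rangle_\B$. Each diagonal coefficient $T^{\rho,z,z}$ with $z\in X_e$ is positive definite by \cite[Proposition 5.3]{BeCo8}, as recalled at the end of Section \ref{prelim}. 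Hence $T^{\rho,x,y}\in{\rm Span}(P(\B))=B(\B)$. Unitality of $\B$ is not needed for this inclusion.

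For ``$\subseteq$'', we first show that the right-hand side, call it $\mathcal{C}$, is a vector space. It is closed under scalars, since $\lambda T^{\rho,x,y}=T^{\rho,x,\lambda y}$. It is closed under sums by Section \ref{DirectSum}: given $(\rho_1,\X^1,x_1,y_1)$ and $(\rho_2,\X^2,x_2,y_2)$, the direct sum action $\rho_1\oplus\rho_2$ on $\X^1\oplus\X^2$ satisfies
\[ T^{\rho_1\oplus\rho_2,\, x_1\oplus x_2,\, y_1\oplus y_2}=T^{\rho_1,x_1,y_1}+T^{\rho_2,x_2,y_2}, \]
because the inner product is additive over summands. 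Since $B(\B)$ is spanned by $P(\B)$, it then suffices to show that every $T\in P(\B)$ equals $T^{\rho,x,x}$ for some action $\rho$ and some $x\in X_e$.

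This GNS-type step is the main obstacle, and it is where unitality of $\B$ is used. The natural route is to invoke \cite[Theorem 5.1]{BeCo8}, which for unital $\B$ should realize a positive definite $\B$-bundle map as a diagonal coefficient of a $\B$-action on a Hilbert $\B$-bundle, with $x$ in the unit fibre. If that result is phrased differently, we would carry out the construction directly. On $C_c(\B)$ we set $X^0_r:=\{f\in C_c(\B): \text{suitably graded}\}$; more precisely we take $X^0_r$ to be finite sums $\sum_i a_i\odot g_i$ with $a_i\in B_{g_i}$, equipped with right multiplication by $\B$ and the $\B$-valued form
\[ \langle a\odot g,\ a'\odot g'\rangle := T_{g^{-1}g'}\big(a^*a'\big)\ \text{ placed in the appropriate fibre.} \]
The key points are as follows. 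Positivity of this form is exactly the matrix positivity in $M_{\mathbf g}(\B)$ from the definition of positive definiteness. Left multiplication $\rho(b)$ gives a pre-action in the sense of Remark \ref{pre-act-def}; the bound $\langle\rho(b)\xi,\rho(b)\xi\rangle\le\|b\|^2\langle\xi,\xi\rangle$ can be obtained via Theorem \ref{PDCP}, since $\Phi_T$ is completely positive. After completion, the class of $x=1_\B\odot e$ in the unit fibre satisfies $\langle x,\rho(b)x\rangle=T_g(b)$. This is the only place the unit of $B_e$ enters.

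Combining the two steps, every positive definite map lies in $\mathcal C$, hence every finite linear combination of them does, which gives $B(\B)\subseteq\mathcal C$ and finishes the proof.
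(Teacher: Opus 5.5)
Your proposal is correct and is essentially the paper's proof. For $\supseteq$ you use polarization together with positive definiteness of diagonal coefficients. For $\subseteq$ you use \cite[Theorem 5.1]{BeCo8} to write each element of $P(\B)$ as some $T^{\rho_i,x_i,x_i}$, and then take a direct sum of the $\rho_i$ to absorb the linear combination.

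There are two harmless slips. First, since the inner product is linear in the second variable, the polarization coefficients must be $(-i)^k$. Your version with $i^k$ returns $T^{\rho,y,x}$, which still suffices after swapping $x$ and $y$. Second, your fallback GNS sketch would not work as written. The fibres do not depend on $r$, and the form $\langle a\odot g,a'\odot g'\rangle=T_{g^{-1}g'}(a^*a')$ violates $\langle x,yb\rangle_\B=\langle x,y\rangle_\B\, b$ unless $T$ is a module map. A working version uses elements $a\otimes c$ with $a\in B_g$, $c\in B_{g^{-1}r}$ and $\langle a\otimes c,a'\otimes c'\rangle=c^*T_{g^{-1}g'}(a^*a')c'$. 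You do not need the fallback, though, because the cited theorem supplies this step.
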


\begin{proof} The inclusion $\supseteq$
follows from \cite[Proposition 4.5]{BeCo8} and the polarization identity
\[ 
\langle x, \rho(b) y \rangle_\B 
= \frac 1 4 \sum_{k=0}^3 (-i)^k \langle x + i^k y, \rho(b)(x + i^k y) \rangle_\B,
\]
which is valid for any action $\rho$ of  $\B$ on a Hilbert $\B$-bundle $\X$ and any $b \in \B, x, y\in X_e$.

To prove the opposite inclusion, assume $T \in B(\B)$, so $T= \sum_{i=1}^n \lambda_iT^i$ for some $\lambda_i \in \Complessi$ and $ T^i \in P(\B)$ for $i=1, \ldots , n$. Using \cite[Theorem 5.1]{BeCo8}, we can write $T^i = T ^{\rho_i, x_i, x_i}$ for some action $\rho_i$ on a Hilbert $\B$-bundle $\X^i$ and $x_i\in X^i_e$ for each $i=1, \ldots, n$. Then for all $g\in G$ and $b\in B_g$, we have
\begin{align*} T_g(b) &= \sum_{i=1}^n \lambda_i T^i_g(b) = \sum_{i=1}^n \lambda_i T ^{\rho_i, x_i, x_i}_g(b)
 = \sum_{i=1}^n \lambda_i \langle x_i, \rho_i(b) x_i\rangle_\B\\  &= \big\langle\oplus_{i=1}^n x_i, \big((\oplus_{i=1}^n \rho_i)(b)\big)(\oplus_{i=1}^n \lambda_ix_i) \big\rangle_\B
= T_g^{\oplus_{i=1}^n \rho_i,\, \oplus_{i=1}^n x_i,\, \oplus_{i=1}^n\lambda_ix_i }(b).
\end{align*} 
Thus $T = T^{\oplus_{i=1}^n \rho_i,\, \oplus_{i=1}^n x_i,\, \oplus_{i=1}^n\lambda_ix_i}$, which shows the desired inclusion.
\end{proof}
 
\begin{remark} If $\B$ is unital, then 
$I_g(b) =\langle 1_{B_e}, \rho_\B(b)1_{B_e}\rangle_\B = \langle 1_{B_e}, \rho_\B(1_{B_e})b\rangle_\B$ 
for all $g \in G$ and $b\in B_g$, where $\rho_\B$ is the trivial $\B$-action on $\B$. If $\B$ is not unital, it is not clear %to us 
whether $I = (I_g)_{g\in G}$ can 
always be written as a coefficient of some $\B$-action (over the unit fibre). Anyhow, if $(e_j)_{j\in J} $ is a bounded approximate unit for $B_e$, we 
have $I_g(b) = \lim_{j} \langle e_j , \rho_\B(b) e_j\rangle_\B= \lim_{j} \langle e_j , \rho_\B(e_j)b\rangle_\B$ for all $g \in G$ and $b\in B_g$. 
\end{remark}
For $j=1, 2$, let $\rho_j$ be a $\B$-action on a Hilbert $\B$-bundle $\X^j$ and $x_j, y_j\in X^j_e$. Following our previous convention,  we identify $x_2\dot\otimes x_1$ and $y_2\dot\otimes y_1$ with their canonical images in the unit fibre of $\X^2\otimes_{\rho_1}\X^1$. Then we have 
\begin{equation}\label{TensorCoeff} T^{\rho_1, x_1, y_1}\cdot T^{\rho_2, x_2, y_2} = T^{\rho_2\dot\otimes\rho_1, x_2\dot\otimes x_1, y_2\dot \otimes y_1}.\end{equation}
Indeed, let $g\in G, b\in B_g$. Then 
\begin{align*} 
T_g^{\rho_2\dot\otimes\rho_1, x_2\dot\otimes x_1, y_2\dot \otimes y_1}(b) 
&=\langle x_2\dot\otimes x_1, [(\rho_2\dot\otimes\rho_1)(b)](y_2\dot \otimes y_1)\rangle_\B
= \langle x_2\dot\otimes x_1, (\rho_2(b)y_2)\dot \otimes y_1\rangle_\B\\
&= \big\langle x_1, \rho_1(\langle x_2, \rho_2(b)y_2\rangle_\B) y_1\big\rangle_\B
=\big\langle x_1, \rho_1\big(T_g^{\rho_2, x_2, y_2}(b)\big) y_1\big\rangle_\B\\
&= T_g^{\rho_1, x_1, y_1}\big(T_g^{\rho_2, x_2, y_2}(b)\big)
= (T^{\rho_1, x_1, y_1}\cdot T^{\rho_2, x_2, y_2})_g(b)
\end{align*}
When $\B$ is unital,  using Proposition \ref{FS-coeff} in combination with  formula (\ref{TensorCoeff}), we get another proof that $B(\B)$ is closed under product. 
Moreover, we may introduce 
\[ \|T\|:= \inf\big\{\|x\|\|y\|: T= T^{\rho,x,y} \text{ for some $\B$-action $\rho$ on some $\X$ and $x, y\in X_e$}\big\}\]
for each $T\in B(\B)$. Then we have: 
\begin{theorem}\label{FS-Banach}
Assume $\B$ is unital. Then
the Fourier-Stieltjes algebra $B(\B)$ is a unital Banach algebra w.r.t.~$\|\cdot\|$. 
\end{theorem}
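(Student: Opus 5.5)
The plan mirrors the classical Eymard argument for $B(G)$, with actions of $\B$ in place of unitary representations. Everything rests on the coefficient description of Theorem \ref{FS-coeff}, on direct sums of actions (Section \ref{DirectSum}) and on the tensor product formula (\ref{TensorCoeff}).

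\emph{Step 1: $\|\cdot\|$ is a norm.} First, $\|T\|\ge 0$, and $\|T\|<\infty$ for every $T\in B(\B)$ by Theorem \ref{FS-coeff}. Homogeneity follows from $T^{\rho,x,\lambda y}=\lambda T^{\rho,x,y}$ together with a rescaling $x\mapsto tx$, $y\mapsto t^{-1}y$. For the triangle inequality, take $T=T^{\rho_1,x_1,y_1}$ and $T'=T^{\rho_2,x_2,y_2}$ and rescale so that $\|x_i\|=\|y_i\|$. Then $T+T'=T^{\rho_1\oplus\rho_2,\,x_1\oplus x_2,\,y_1\oplus y_2}$, and
\[\|x_1\oplus x_2\|^2=\|\langle x_1,x_1\rangle_\B+\langle x_2,x_2\rangle_\B\|\le \|x_1\|^2+\|x_2\|^2.\]
The same bound holds for the $y$'s, which gives $\|T+T'\|\le \|T\|+\|T'\|+\varepsilon$.

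For definiteness, use the estimate
\[\|T_g(b)\|\le \|x\|\,\|\rho(b)y\|\le \|x\|\,\|y\|\,\|b\|.\]
The last inequality comes from Lemma \ref{Lance-like} with $n=1$. Hence $\sup_g\|T_g\|\le \|T\|$, so $\|T\|=0$ forces $T=0$.

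\emph{Step 2: submultiplicativity and the unit.} By (\ref{TensorCoeff}), $TT'$ is the coefficient of $\rho_2\dot\otimes\rho_1$ at $x_2\dot\otimes x_1$ and $y_2\dot\otimes y_1$. The identity
\[\langle x_2\dot\otimes x_1,x_2\dot\otimes x_1\rangle_\B=\langle x_1,\rho_1(\langle x_2,x_2\rangle_\B)x_1\rangle_\B\]
gives $\|x_2\dot\otimes x_1\|\le\|x_2\|\|x_1\|$, so $\|TT'\|\le\|T\|\|T'\|$. For the unit, $I=T^{\rho_\B,1,1}$ shows $\|I\|\le 1$. Conversely, $I_e(1)=1$ together with the bound from Step 1 gives $\|I\|\ge 1$.

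\emph{Step 3: completeness (the main obstacle).} Following the classical route, I will show that every absolutely convergent series $\sum_n T^n$ with $\sum_n\|T^n\|<\infty$ converges in $B(\B)$. Write $T^n=T^{\rho_n,x_n,y_n}$ with $\|x_n\|=\|y_n\|\le(\|T^n\|+2^{-n})^{1/2}$. Form the infinite direct sum $\rho=\bigoplus_n\rho_n$ on $\bigoplus_n\X^n$. Then $x=(x_n)_n$ and $y=(y_n)_n$ lie in the unit fibre, since $\sum_n\langle x_n,x_n\rangle_\B$ converges in norm because $\sum_n\|x_n\|^2<\infty$.

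Put $T=T^{\rho,x,y}$. By the convergent inner product formula of Section \ref{DirectSum}, $T_g(b)=\sum_n T^n_g(b)$. For the tails, apply the same construction to the indices $n>N$ to get a coefficient representation of $T-\sum_{n\le N}T^n$. This bounds its norm by $\sum_{n>N}(\|T^n\|+2^{-n})\to 0$.

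The delicate point is the identification $T-\sum_{n\le N}T^n=T^{\bigoplus_{n>N}\rho_n,(x_n)_{n>N},(y_n)_{n>N}}$. It holds pointwise from the direct sum inner product, and since $\|\cdot\|$ is a norm on $B(\B)$, pointwise equality of bundle maps suffices. Hence $B(\B)$ is a Banach space, and by Step 2 a unital Banach algebra.
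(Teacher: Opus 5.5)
Your proposal is correct and takes the same route as the paper, which simply refers to the analogous proof for $C^*$-dynamical systems: infinite direct sums of actions (Section \ref{DirectSum}) give the triangle inequality and completeness, and formula (\ref{TensorCoeff}) together with $\|x_2\dot\otimes x_1\|\le\|x_2\|\,\|x_1\|$ gives submultiplicativity. You also supply details the paper leaves implicit: definiteness via $\sup_g\|T_g\|\le\|T\|$, the equality $\|I\|=1$, and the tail estimate for absolutely convergent series.
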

\begin{proof} 
The proof is very similar to the proof of the corresponding statement in \cite[Proposition 3.1]{BeCo6}. Completeness follows from the fact that one may form infinite direct sums of actions, cf.~Section \ref{DirectSum}, while submultiplicativity of the norm follows by using (\ref{TensorCoeff}). 
\end{proof}

As usual, we let $B(G)$ denote the Fourier-Stieltjes algebra of $G$, cf.~\cite{KL}.  
\begin{proposition}\label{embFS(G)}
Assume $\B$ is unital. For each $\phi \in B(G)$, let $T^\phi$ be the $\B$-bundle map given by 
\[ T_g^\phi(a) = \phi(g)\,a \quad \text{for all } g\in G \text{ and } a\in B_g.\]
Then $T^\phi \in B(\B)$ and the map $\phi\mapsto T^\phi$ is an injective, contractive algebra-homomorphism of $B(G)$ into $\B(B)$.
\end{proposition}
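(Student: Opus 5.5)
Since $\phi\in B(G)$, we write $\phi$ as a coefficient of a unitary representation: $\phi(g)=\langle \xi, u_g\eta\rangle$ for some unitary representation $u$ of $G$ on a Hilbert space $\mathcal{H}$ and $\xi,\eta\in\mathcal{H}$, with $\|\phi\|=\|\xi\|\|\eta\|$ (the norm in $B(G)$ is attained, cf.~\cite{KL}). The plan is to build from $u$ a $\B$-action whose coefficient over the unit fibre is $T^\phi$, with vectors of norm $\|\xi\|$ and $\|\eta\|$. This gives $T^\phi\in B(\B)$ via Theorem \ref{FS-coeff} and $\|T^\phi\|\le\|\phi\|$.

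\emph{Construction.} Take the Hilbert $\B$-bundle $\X=(X_g)_{g\in G}$ with $X_g:=\mathcal{H}\odot B_g$, completed with respect to the inner product
\[\langle \zeta\otimes b, \zeta'\otimes b'\rangle_\B=\langle\zeta,\zeta'\rangle\, b^*b'\in B_{g^{-1}g'}.\]
The right action is $(\zeta\otimes b)c=\zeta\otimes bc$, and the action is $\rho(a)(\zeta\otimes b)=u_h\zeta\otimes ab$ for $a\in B_h$. Positivity of the inner product on each fibre is the usual positivity for the exterior tensor product of the Hilbert space $\mathcal{H}$ and the Hilbert $B_e$-module $B_g$. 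The axioms (i)--(iv) of Definition \ref{BC-action-def} are then routine checks: for instance, (iii) uses $u_h^*=u_{h^{-1}}$, and boundedness follows from the unitarity of $u$. Alternatively, $\rho$ is the tensor product of the trivial action $\rho_\B$ with the action of $u$ viewed through the group bundle, in the spirit of Theorem \ref{gen-tensor-action}. With $x=\xi\otimes 1$ and $y=\eta\otimes 1$ in $X_e$ we get
\[\langle x,\rho(a)y\rangle_\B=\langle\xi,u_g\eta\rangle\, a=\phi(g)a=T^\phi_g(a),\]
and $\|x\|=\|\xi\|$, $\|y\|=\|\eta\|$ since $\|1\|=1$ (assuming $\B\neq0$).

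\emph{Remaining claims.} Linearity of $\phi\mapsto T^\phi$ is clear. For multiplicativity,
\[(T^\phi\cdot T^\psi)_g(a)=\phi(g)\psi(g)a=T^{\phi\psi}_g(a).\]
For injectivity: if $T^\phi=0$, then $\phi(g)a=0$ for all $a\in B_g$, which forces $\phi(g)=0$ whenever $B_g\neq0$.

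\emph{Main obstacle.} Injectivity in general needs $B_g\neq 0$ for every $g$, for instance when $\B$ is saturated or each fibre contains a unitary. I would therefore either impose such an assumption or read the statement with that implicit hypothesis. The only other delicate point is completing $\X$ correctly (separation and completion as in Remark \ref{pre_H}) and confirming that $\rho$ is a pre-action in the sense of Remark \ref{pre-act-def}. For that, the inequality $\langle\rho(a)z,\rho(a)z\rangle_\B\le\|a\|^2\langle z,z\rangle_\B$ follows as in Lemma \ref{Lance-like}, since left multiplication by $a$ is bounded and $u$ is unitary.
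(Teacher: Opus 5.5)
Your proposal is correct and is essentially the paper's proof: the same Hilbert $\B$-bundle $(\H\otimes B_g)_{g\in G}$, the same action $\zeta\otimes b\mapsto u_h\zeta\otimes ab$, the same coefficient vectors $\xi\otimes 1_{B_e}$, $\eta\otimes 1_{B_e}$, and Theorem \ref{FS-coeff}. The only cosmetic difference is that the paper takes the infimum over decompositions of $\phi$ rather than using that the $B(G)$-norm is attained, and this does not matter.

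Your caveat on injectivity is also right, and the paper passes over it as ``routine''. Since $\delta_g(s)=\langle \delta_e,\lambda(s)\delta_{g^{-1}}\rangle$ shows $\delta_g\in B(G)$ for every $g$, the map $\phi\mapsto T^\phi$ is injective exactly when $B_g\neq 0$ for all $g$. For example, the unital bundle with $B_e=\mathbb{C}$ and $B_g=0$ for $g\neq e$ is a counterexample whenever $G$ is nontrivial. So the statement does need that extra hypothesis.
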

\begin{proof} Let $\phi\in B(G)$. Then $\phi(g) = \langle \xi', U(g)\eta'\rangle$ for some unitary representation $U$ of $G$ on some Hilbert space $\H$ and $\xi', \eta'\in \H$. Also, $\|\phi\|$ is the infimum of $\|\xi'\|\, \|\eta'\|$ over all such possible decompositions of $\phi$. 

Now, set $\H_\B:= (\H \otimes B_g)_{g\in G}$ where, for each $g\in G$, $\H\otimes B_g$ denotes the external tensor product of the Hilbert $\Complessi$-module $\H$ with the Hilbert $B_e$-module $B_g$. One easily sees that $\H_\B$ is a Hilbert $\B$-bundle, obtained as  the completion of the right inner product $\B$-bundle $(\H\odot B_g)_{g\in G}$ equipped  with the operations determined by
\[ (\xi\otimes b)\cdot c = \xi\otimes bc \,\text{ and }\, \big\langle \xi\otimes b, \eta\otimes c\big\rangle_\B= \langle \xi, \eta\rangle\,  b^*c\]
for all $\xi, \eta \in \H$ and $b, c\in \B$.  Moreover, we get a $\B$-action $\rho_U$ on $\H_\B$,  determined by
\[ \rho_U(a)(\xi\otimes b):= U_g(\xi)\otimes ab \quad \text{for } g, h \in G, a\in B_g, b\in B_h, \text{ and } \xi \in \H.\]
Then we have
\begin{align*} 
T_g^{\,\rho_U,\, \xi'\otimes 1_{B_e},\, \eta'\otimes 1_{B_e}}(a)&= \big\langle \xi'\otimes 1_{B_e}, \rho_U(a)(\eta'\otimes 1_{B_e})\big\rangle_\B 
= \big\langle \xi'\otimes 1_{B_e}, U_g(\eta')\otimes a\big\rangle_\B \\
&= 
\langle \xi', U_g(\eta')\rangle\, a = \phi(g) \,a = T_g^\phi(a)
\end{align*}
for all $g\in G$ and $a\in B_g$. Thus,  $T^\phi = T^{\,\rho_U,\, \xi'\otimes 1_{B_e},\, \eta'\otimes 1_{B_e}} \in B(\B)$ by Proposition \ref{FS-coeff}. Further, 
\[\|T^\phi\| \, \leq \, \|\xi'\otimes 1_{B_e}\|\, \|\eta'\otimes 1_{B_e}\| = \|\xi'\|\, \|\eta'\|.\]
Taking the infimum over all possible decompositions of $\phi$, we get $\|T^\phi\|\leq \|\phi\|$. 
The remaining assertions can be routinely checked.    
\end{proof}

\medskip Classically, one way to define the Fourier algebra $A(G)$ is to let $A(G)$ be the closed span of all positive definite complex functions on $G$ having finite support. 
Recalling that a $\B$-bundle map $T=(T_g)_{g\in G}$ 
has \emph{finite support} when $T_g= 0$ for all but finitely many $g$'s in $G$, we define analogously  the \emph{Fourier algebra of a Fell bundle $\B=\{B_g\}_{g\in G}$} by 
\[A(\B):= \overline{{\rm span}\{T \in P(\B): T \text{ has finite support}\}}^{\|\cdot\|} \, \subseteq B(\B).\]

Then it can readily be checked that $A(\B)$ is a closed two-sided ideal of $B(\B)$. Now, there are other equivalent definitions of $A(G)$ (see e.g.~\cite{KL}) and an interesting problem is whether similar characterizations of $A(\B)$ can be obtained. 
Following Ravnanger's discussion \cite{Ravn} of the Fourier algebra associated to a unital discrete $C^*$-dynamical system, $A(\B)$ may be described as follows:
\begin{proposition} Assume $\B$ is unital. Then 
\[A(\B):= \overline{\{T \in B(\B): T \text{ has finite support}\}}^{\|\cdot\|}.\] 
\end{proposition}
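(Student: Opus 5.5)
Set $A_0(\B):=\{T\in B(\B): T \text{ has finite support}\}$. Since $A(\B)$ is by definition the closure of the span of the finitely supported elements of $P(\B)$, and such a span is contained in $A_0(\B)$, the inclusion $A(\B)\subseteq\overline{A_0(\B)}$ is immediate. The real content is the converse: every finitely supported $T\in B(\B)$ must be shown to lie in $A(\B)$. Because $A(\B)$ is norm closed, it suffices to treat a single finitely supported $T$.

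\textbf{Step 1: a convenient finitely supported multiplier.} Let $F:={\rm supp}(T)$, a finite subset of $G$. The indicator function $1_F$ is finitely supported on $G$, so it lies in $A(G)$. Moreover, $A(G)$ is the closed span of finitely supported positive definite functions, and in fact $1_F$ can be written as a \emph{finite} linear combination of such functions. Indeed, each $\delta_g$ is a coefficient $\langle\delta_e,\lambda(g)\delta_{g'}\rangle$-type expression, so $\delta_g=\langle \delta_{e},\lambda_g\delta_{g^{-1}}\rangle$. Polarizing this coefficient gives four positive definite functions $\langle\xi,\lambda(\cdot)\xi\rangle$ with $\xi$ finitely supported, and each of these is finitely supported.

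\textbf{Step 2: transfer to $\B$ and absorb.} By Proposition \ref{embFS(G)}, the map $\phi\mapsto T^\phi$ is a homomorphism into $B(\B)$. The key observation is that if $\phi\in P(G)$, then $T^\phi\in P(\B)$. This can be checked directly, since
\[\sum_{i,j} b_i\,\phi(g_i^{-1}g_j)\,b_i'^{*}b_j'\,b_j^*\ge 0\]
holds because it is the Schur product of the positive scalar matrix $[\phi(g_i^{-1}g_j)]$ with a positive matrix over $B_e$. Alternatively, it follows because $T^\phi$ is a diagonal coefficient of $\rho_U$. Hence $T^{1_F}$ is a finite combination of finitely supported elements of $P(\B)$, so $T^{1_F}\in A(\B)$. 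Since $T^{1_F}_g$ is the identity for $g\in F$ and $0$ otherwise, we get $T=T^{1_F}\cdot T$. Because $A(\B)$ is a two-sided ideal of $B(\B)$ (as noted just before the statement), this gives $T\in A(\B)$.

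\textbf{Main obstacle.} The step needing most care is the claim that $A(\B)$ is an ideal, which is only asserted in the paper as "readily checked". To avoid relying on it, argue directly: write $T=\sum_k\lambda_kT^k$ with $T^k\in P(\B)$. Then
\[T=T^{1_F}\cdot T=\sum_{k,m}\lambda_k\mu_m\,T^{\phi_m}\cdot T^k,\]
where $1_F=\sum_m\mu_m\phi_m$ with each $\phi_m$ a finitely supported positive definite function. Each product $T^{\phi_m}\cdot T^k$ is positive definite, since $P(\B)$ is closed under products (Proposition \ref{comp-pos-def} together with \cite[Theorem 5.1]{BeCo8}, or via \cite[Corollary 3.18]{BeCo7}). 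Each such product is also finitely supported, because $T^{\phi_m}$ is. So $T$ actually lies in the span itself, not merely in its closure, and no limiting argument is needed.
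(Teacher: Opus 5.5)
Your argument is correct, but it takes a different route from the paper's.

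\textbf{The paper's route.} The paper writes a finitely supported $T$ as a coefficient $T^{\rho,x,y}$ via Theorem \ref{FS-coeff}, which is where unitality enters. It then passes to the regular action $\widecheck\rho$ on $\widecheck\X$ and truncates the \emph{vectors}: with $\widecheck x$ equal to $x$ on ${\rm supp}(T)$ and $\widecheck y$ equal to $y$ at $e$ and zero elsewhere, one gets $T=T^{\widecheck\rho,\widecheck x,\widecheck y}$. Polarization then gives four diagonal coefficients with finitely supported vectors, which are finitely supported elements of $P(\B)$. This is Ravnanger's argument transplanted to Fell bundles.

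\textbf{Your route.} You truncate the \emph{map} itself, using the $B(G)$-module structure: $T=T^{1_F}\cdot T$, where $1_F$ is a finite combination of finitely supported positive definite functions. The key fact is that multiplying a positive definite $\B$-bundle map pointwise by a positive definite function on $G$ keeps it positive definite. This can be made completely elementary: write $[\phi(g_i^{-1}g_j)]=\sum_l[\overline{\alpha^l_i}\alpha^l_j]$ and replace $b_i$ by $\overline{\alpha^l_i}b_i$ in the defining sum.

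\textbf{What each buys.} Your route needs neither the coefficient description of $B(\B)$ nor regular actions. Its algebraic core also goes through without unitality if you use the direct Schur-product check: the finitely supported elements of $B(\B)$ are exactly the span of the finitely supported elements of $P(\B)$. Unitality is then only needed for the norm. The paper's route gives more structure: each finitely supported $T$ appears as a coefficient of the regular action $\widecheck\rho$ with finitely supported vectors. The remark after the proposition uses exactly this to place $A(\B)$ inside the closure of coefficients of regular actions.

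\textbf{Minor slip.} $\delta_g$ should be the function $s\mapsto\langle\delta_e,\lambda_s\delta_{g^{-1}}\rangle$, not the number $\langle\delta_e,\lambda_g\delta_{g^{-1}}\rangle$.
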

\begin{proof}
Let $A'(\B)$ denote the right-hand side. As any linear combination of $\B$-bundle maps with finite support has finite support, it is obvious that $A(\B) \subseteq A'(\B)$.
The converse inclusion can be shown by adapting the analogous argument used to prove \cite[Theorem 3.3 and Corollary 3.4]{Ravn}. 

Indeed, suppose $T\in B(\B)$ has a non-empty finite support $S \subseteq G$. Using Proposition \ref{FS-coeff}, we can write $T=T^{\rho,x,y}$ for some $\B$-action $\rho$ on a Hilbert $\B$-bundle $\X=(X_r)_{r\in G}$ and some $x, y \in X_e$. Let then $\widecheck\rho$ denote the regular $\B$-action on the 
regular $\B$-bundle $\widecheck\X=(\widecheck X_r)_{r\in G}$ associated to $\X$ and $\rho$, cf.~\cite[Examples 3.4 and 4.11]{BeCo8}. 
We recall that $C_c(G, X_r)$ is dense in $\widecheck X_r$, $\langle \xi, \eta\rangle_\B = \sum_{t\in G} \langle\xi(t), \eta(t)\rangle_\B$ and $(\widecheck\rho(b)\xi)(h) = \rho(b) \xi(g^{-1}h)$ for all $r, s , g, h\in G$, $\xi \in C_c(G, X_r)$, $\eta \in C_c(G, X_s)$ and $b\in B_g$.
 
Then, defining $\widecheck{x}$ and $\widecheck{y}$ in $\widecheck X_e$ by 
\[ \widecheck x(t) = \begin{cases} x \quad &\text{if } t \in S,\\ \, 0 \quad &\text{otherwise} \end{cases} \quad \text{ and }  \quad
\widecheck y(t) = \begin{cases} y \quad &\text{if } t = e, \\ \, 0 \quad &\text{otherwise} \end{cases}\,,\]
 one easily computes that $T^{\widecheck\rho,\widecheck{x},\widecheck{y}} = T^{\rho, x, y}= T$.
Using polarization, we can write 
\[T=T^{\widecheck\rho,\widecheck{x},\widecheck{y}} = 
\frac 1 4 \sum_{k=0}^3 (-i)^kT^{\widecheck\rho,\,\widecheck{x} + i^k \widecheck{y},\,\widecheck{x} + i^k \widecheck{y}}.\]
Since $\widecheck{x} + i^k \widecheck{y}$ has finite support for each $k=0,\ldots,3$, it is easy to see that $T^{\widecheck\rho,\,\widecheck{x} + i^k \widecheck{y},\,\widecheck{x} + i^k \widecheck{y}}$ has finite support for each $k=0,\ldots,3$ as well. 
We have thus shown that every $T \in B(\B)$ with finite support can be written as a finite linear combination of elements in $P(\B)$ with finite support. Taking closures,  we get that $A'(\B) \subseteq A(\B)$.
\end{proof}

The proof above also implies that  $A(\B)$ is contained in the norm-closure of the set
\[ \{ T^{\widecheck\rho, \xi, \eta}  \in B(\B) :  \rho \text{ is a } \B\text{-action on } \X \text{ and } \xi, \eta \in \widecheck X_e\}.\] 
To show that equality holds, as it is the case when $\B$ is the Fell bundle associated to a unital discrete $C^*$-dynamical system, cf.~\cite[Theorem 3.3]{Ravn}, 
one will probably need to study $\rho\dot\otimes\widecheck\rho_\B$ versus $\widecheck\rho$, where $\rho$ is a $\B$-action, $\widecheck\rho_\B$ is the regular $\B$-action and $\widecheck\rho$ is the regular $\B$-action associated to $\rho$, cf.~\cite{BeCo8}.   
We also note that a possibly different definition of $A(\B)$ is proposed in \cite{BKMS} 
when $\B$ is the Fell bundle associated with a twisted action of an \'etale groupoid.  

\begin{remark}
%Needless to say, 
It readily follows from Proposition \ref{embFS(G)} 
that when $\B$ is unital we also get an injective, contractive algebra homomorphism from $A(G)$ into $A(\B)$. 
\end{remark}

%\medskip
We end up this section by discussing the relationship between the Fourier-Stieltjes algebra $B(\Sigma)$ of a unital discrete $C^*$-dynamical system $\Sigma=(A,G,\alpha)$ introduced in \cite{BeCo6} and the Fourier-Stieltjes algebra $B(\B_\Sigma)$ of the Fell bundle $\B_\Sigma$ canonically associated to $\Sigma$. 

The definition of $B(\Sigma)$ in \cite{BeCo6}  is as follows. Let $(\varrho, v)$ be an equivariant representation of  $\Sigma$ on a Hilbert $A$-module $X$, cf.~\cite{BeCo3, BeCo4, BeCo6}, and $x, y\in X$. Define
$\tilde T_{\varrho, v, x, y}: A\times G \to A$  by 
\[ \tilde T_{\varrho, v, x, y}(a, g) = \langle x, \varrho(a)v(g)y\rangle_A\]
for all $(a, g) \in A\times G$. Then  $B(\Sigma)$ consists of all possible maps of the form $\tilde T_{\varrho, v, x, y}$. 

\begin{proposition} Assume $\Sigma=(A,G,\alpha)$ is a unital discrete $C^*$-dynamical system. Then there is a contractive Banach algebra isomorphism $\theta$ from $B(\Sigma)$ onto $B(\B_\Sigma)$ such that
\begin{equation}\label{theta} \theta (\tilde T_{\varrho, v, x, y}) = T^{\rho, (x, e), (y, e)}\end{equation}
whenever $(\varrho, v)$ is an equivariant representation of  $\Sigma$ on a Hilbert $A$-module $X$, $x, y\in X$,   and $\rho=\rho_{(\varrho, v)}$ is 
 the corresponding $\B_\Sigma$-action on  the  Hilbert $\B_\Sigma$-bundle $\X=(X\times\{g\})_{g\in G}$ associated to $X$, cf.~Section \ref{tp-equiv}.
\end{proposition}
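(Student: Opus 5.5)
The plan is to define $\theta$ by formula (\ref{theta}), show it is well defined, and then check that it is an isometric, multiplicative bijection.

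\textbf{The basic computation.} For $(a,g)\in\B_\Sigma$ and $x,y\in X$,
\[
T_g^{\rho,(x,e),(y,e)}((a,g)) = \big\langle (x,e), (\varrho(a)v(g)y, g)\big\rangle_{\B_\Sigma} = \big(\langle x, \varrho(a)v(g)y\rangle_A,\, g\big) = \big(\tilde T_{\varrho,v,x,y}(a,g),\, g\big).
\]
Hence $T^{\rho,(x,e),(y,e)}$ is completely determined by the function $\tilde T_{\varrho,v,x,y}$. This lets me set $\theta(\tilde T)_g((a,g)) := (\tilde T(a,g),g)$ for any map $\tilde T : A\times G\to A$. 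The assignment is intrinsic, so $\theta$ is well defined and injective. It is clearly linear, and it sends pointwise products of the form $\tilde T(\tilde T'(a,g),g)$ to composition of bundle maps; these are the algebra structures on $B(\Sigma)$ from \cite{BeCo6} and on $B(\B_\Sigma)$. Formula (\ref{theta}) then holds, and $\theta$ maps $B(\Sigma)$ into $B(\B_\Sigma)$ by Theorem \ref{FS-coeff}.

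\textbf{Surjectivity, the key step.} Take $T\in B(\B_\Sigma)$. By Theorem \ref{FS-coeff}, $T = T^{\rho',\xi,\eta}$ for some $\B_\Sigma$-action $\rho'$ on a Hilbert $\B_\Sigma$-bundle $\X'$ and some $\xi,\eta\in X'_e$. I will invoke the correspondence from \cite{BeCo8} between $\B_\Sigma$-actions and equivariant representations of $\Sigma$. Concretely, $X:=X'_e$ is a Hilbert $A$-module. Set $\varrho(a):=\rho'((a,e))|_{X'_e}$. For $v(g)$, use unitality: let $v(g)x := \rho'((1,g))x$, which lies in $X'_g$, and pull it back to $X'_e$ by the right action of $(1,g^{-1})$, giving $v(g)x := (\rho'((1,g))x)\cdot(1,g^{-1})$.

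I then need to verify three things:
\begin{itemize}
\item[(a)] $(\varrho,v)$ is an equivariant representation, i.e.\ $v(g)$ is $\alpha_g$-compatible with the module structure and $\varrho(\alpha_g(a)) = v(g)\varrho(a)v(g)^*$;
\item[(b)] $v$ is a homomorphism with $v(g)^*=v(g^{-1})$;
\item[(c)] $\langle \xi,\varrho(a)v(g)\eta\rangle_A$ equals the first component of $T_g((a,g))$.
\end{itemize}
Item (c) uses $(x,g)\cdot(1,g^{-1})$-type identities together with the fact that the inner product $\langle \xi, y\cdot(1,g^{-1})\rangle$ relates to $\langle\xi,y\rangle$ via $\alpha_g$. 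This bookkeeping with $\alpha$ is where I expect the main difficulty. If the direct construction becomes unwieldy, I will instead use positive definiteness. By \cite[Theorem 5.1]{BeCo8}, $T$ is a combination of diagonal coefficients. Each positive definite $\B_\Sigma$-bundle map yields a positive definite map $\tilde T$ on $\Sigma$ in the sense of \cite{BeCo6}, which is a diagonal coefficient of an equivariant representation by the GNS-type result \cite[Corollary 4.4]{BeCo6}. I can then use linearity of $\theta$ together with the fact that $B(\Sigma)$ is a linear span, via direct sums.

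\textbf{Norms.} Contractivity follows from (\ref{theta}): $\|(x,e)\| = \|x\|$, so $\|\theta(\tilde T)\|\le \|x\|\|y\|$ for every representation $\tilde T = \tilde T_{\varrho,v,x,y}$. Taking the infimum over all such representations gives $\|\theta(\tilde T)\|\le\|\tilde T\|$. The reverse inequality, which would make $\theta$ isometric, follows from the surjectivity construction. That construction sends $(\rho',\xi,\eta)$ to $(\varrho,v,\xi,\eta)$ with the same vectors, so it does not increase norms. It is therefore worth doing the first approach carefully, since it gives the isometry; the statement only asks for contractivity.
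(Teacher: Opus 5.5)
Your definition of $\theta$, the injectivity and multiplicativity, formula (\ref{theta}) and the contractivity argument are the paper's. Your fallback for surjectivity is also exactly the paper's proof. It shows $P(\B_\Sigma)\subseteq\theta(P(\Sigma))$, which rests on \cite[Example 4.6]{BeCo7} for translating positive definiteness (you assert this without justification) and on \cite[Corollary 4.4]{BeCo6}. It then concludes by linearity of $\theta$ and $B(\Sigma)=\mathrm{Span}\,P(\Sigma)$. So the proposal does prove the statement.

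Your primary, direct route is genuinely different, but as written your step (b) fails for degenerate actions.
\begin{itemize}
\item \textbf{Where it fails.} Definition \ref{BC-action-def} imposes no nondegeneracy, so $v(e)x=\rho'((1,e))(x\cdot(1,e))=Px$, where $P:=\rho'((1,e))|_{X'_e}$ is only a projection (take the zero action, or a direct sum with it). Since $(1,g)(1,e)=(1,g)$, you get $v(g)=v(g)P$ for every $g$. Hence no $v(g)$ is invertible once $P\neq\mathrm{id}$, and $\langle v(g)x,v(g)y\rangle_A=\alpha_g(\langle Px,Py\rangle_A)$.
\item \textbf{Notation.} $v(g)$ is $\alpha_g$-semilinear, so ``$v(g)^*$'' should read $v(g)^{-1}$.
\item \textbf{The repair.} Work on $X:=PX'_e$ and replace $\xi,\eta$ by $P\xi,P\eta$. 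This leaves the coefficient unchanged, because $(1,e)(a,g)=(a,g)=(a,g)(1,e)$. Moreover $P$ commutes with $\varrho$, and $P$ fixes the range of $v(g)$ because $(1,e)(1,g)=(1,g)$.
\item \textbf{Verification on $X$.} One checks directly that $v(gh)=v(g)v(h)$, $v(e)=\mathrm{id}$, $v(g)(x\cdot a)=(v(g)x)\cdot\alpha_g(a)$, $\langle v(g)x,v(g)y\rangle_A=\alpha_g(\langle x,y\rangle_A)$ and $v(g)\varrho(a)v(g)^{-1}=\varrho(\alpha_g(a))$.
\item \textbf{Your (c).} One has $\langle P\xi,\varrho(a)v(g)P\eta\rangle_{\B_\Sigma}=\langle\xi,\rho'((a,g))\eta\rangle_{\B_\Sigma}\cdot(1,g^{-1})$. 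Since $\alpha_g(1)=1$, this gives (c) with no real $\alpha$-bookkeeping.
\end{itemize}

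Once repaired, this route buys two things. First, it needs only Theorem \ref{FS-coeff}, not the translation of positive definiteness from \cite{BeCo7} or \cite[Corollaries 4.4, 4.5]{BeCo6}. Second, it yields a stronger conclusion. Since $\|P\xi\|\,\|P\eta\|\le\|\xi\|\,\|\eta\|$ and $\theta$ is injective, every representation $\theta(\tilde T)=T^{\rho',\xi,\eta}$ gives $\|\tilde T\|\le\|\xi\|\,\|\eta\|$, hence $\|\tilde T\|\le\|\theta(\tilde T)\|$. So $\theta$ is in fact isometric. The paper's span-of-positive-definite argument gives no such norm control, and the paper only asserts contractivity.
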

\begin{proof} 
Let $M(\Sigma)$ consists of all families  $(\tilde T_g)_{g\in G}$, where each $\tilde T_g$ is a bounded linear map from $A$ into itself, 
organized as an algebra with operations 
\[ (\tilde S_g)_{g\in G} + (\tilde T_g)_{g\in G} = (\tilde S_g +\tilde T_g)_{g\in G}, \, \lambda (\tilde T_g)_{g\in G} = (\lambda \tilde T_g)_{g\in G}, \, (\tilde S_g)_{g\in G}\cdot(\tilde T_g)_{g\in G} = (\tilde S_g \circ \tilde T_g)_{g\in G}.\]
Then it is routine to verify that the map  $\tilde T\mapsto (\tilde T_g)_{g\in G}$, where $\tilde T_g(a):= \tilde T(a,g)$ for all $g\in G$ and $a\in A$, gives an injective algebra-homomorphim $\Theta$ from  $B(\Sigma)$ into $M(\Sigma)$.
Moreover, the map $\Theta': M(\Sigma) \to L(\B_\Sigma)$ defined by  
\[\Theta'\big((\tilde T_g)_{g\in G}\big):= (T_g)_{g\in G},\]
where $(T_g)_{g\in G}$ is the $\B_\Sigma$-bundle map given by $T_g((a,g)) := (\tilde{T}_g(a),g)$ for all $(a,g)\in \B_\Sigma$, is easily seen to be an algebra-isomorphism. 
Thus we get an injective algebra homomorphism 
$\theta := \Theta'\circ\Theta: B(\Sigma) \to L(\B_\Sigma)$.
 
Now let $\widetilde T \in B(\Sigma)$.  Write $\widetilde T=  \tilde T_{\varrho, v, x, y}$ for some $(\varrho, v)$ and $x, y\in X$ as in the statement of the proposition and set $\rho:=\rho_{(\varrho, v)}$. Then it is almost immediate that  $\theta (\tilde T_{\varrho, v, x, y}) = T^{\rho, (x, e), (y, e)}$, i.e., (\ref{theta}) holds. Since $T^{\rho, (x, e), (y, e)} \in B(\B_\Sigma)$, we get that $\theta(B(\Sigma))\subseteq B(\B_\Sigma))$. We also get 
\[ \|\theta(\widetilde T)\| \leq \|(x, e)\|\, \|(y, e)\| = \|x\|\, \|y\|.\]
Taking the infimum over all possible $\varrho, v, x$ and $y$, we obtain
  $\|\theta(\widetilde T)\| \leq \|\widetilde T\|$.
  
  It only remains to show that $B(\B_\Sigma))\subseteq \theta(B(\Sigma))$. By \cite[Corollary 4.5]{BeCo6}, we have that  $B(\Sigma) = {\rm Span}\, P(\Sigma)$, where $P(\Sigma)$ consists of the elements of $B(\Sigma)$ that are positive definite (w.r.t.~$\Sigma$). By polarization, it therefore suffices to show that $P(\B_\Sigma)\subseteq \theta(P(\Sigma))$. Let 
  $T\in P(\B_\Sigma)$. For each $g\in G$, let then $\widetilde T_g:A\to A$ be the unique bounded linear map satisfying
  $T((a, g)) = (\widetilde T_g(a), g)$ for all $a\in A$, and  define $\widetilde T: A\times G\to A$ by  $\widetilde T (a, g) = \widetilde T_g(a)$
  for all $(a, g)\in A\times G$.  Then  $\widetilde T \in P(\Sigma)$
  (cf.~\cite[Example 4.6]{BeCo7} and \cite[Corollary 4.4]{BeCo6})
  and (almost tautologically) $T=\theta(\widetilde T)$.
\end{proof}
 
\section{Some questions} \label{final}
We mention a few questions that might be worth being investigated.

\medskip $\bullet$ Can the tensor product construction(s) in Section \ref{tensoraction} be put  in a categorical framework? Of course, here one should at least check the general validity of some (probably weak) form of  associativity and, in more specific situations, also  commutativity whenever it makes sense. 

\medskip 
$\bullet$ Let $\rho$ be an $(\A,\varphi)$-action on a Hilbert $\B$-bundle $\X$, as in Definition \ref{BC-action-def}. Recall from  \cite{BeCo8} that $\rho_\A$ denotes the trivial $\A$-action on the Hilbert $\A$-bundle $\A$.  
What can be said about $\rho_\A \dot\boxtimes \rho$, or $\rho \dot\boxtimes \rho_\B$? What about tensoring $\rho$ on the left with the regular action $\widecheck{\rho_\A}$, or on the right with $\widecheck{\rho_\B}$ ? (See \cite{BeCo8} for the definition of regular actions.) Do we get some kind of Fell absorption principle(s)?

\medskip 
$\bullet$ Let $\B$ be a Fell bundle over a group $G$. Can one define weak containment for $\B$-actions on Hilbert $\B$-bundles?
And is there a meaningful way to define the conjugate of a $\B$-action? 
If so, can one then define a concept of amenability for a $\B$-action, similar to what Bekka did for unitary representations of groups \cite{Bek}, for instance by saying that the trivial $\B$-action is weakly contained in the tensor product of the $\B$-action with its conjugate?

 \bigskip

\noindent {\bf Competing interests} \ The authors have no conflict of interest to declare that are relevant to the content of this article. 
There are no financial and non-financial conflicts of interest.

\medskip

\noindent{\bf Data availability} Data sharing is not applicable to this article as no new data have been created or analyzed in this research.

\bigskip
{\parindent=0pt Addresses of the authors:\\

\smallskip Erik B\'edos, Department of Mathematics, University of
Oslo, \\
P.B. 1053 Blindern, N-0316 Oslo, Norway.\\ E-mail: bedos@math.uio.no. \\

\smallskip \noindent
Roberto Conti, Dipartimento SBAI,
Sapienza Universit\`a di Roma \\
Via A. Scarpa 16,
I-00161 Roma, Italy.
\\ E-mail: roberto.conti@sbai.uniroma1.it
\par}

\end{document}